\newcolumntype{^}{>{\currentrowstyle}}
\def\N{{\mathbb N}}
\def\R{{\mathbb R}}
\def\F{{\mathbb F}}
\def\PG{\mbox{\rm PG}}
\newtheorem{lemma}{Lemma}[section]
\newtheorem{theorem}[lemma]{Theorem}
\newtheorem{corollary}[lemma]{Corollary}
\newtheorem{proposition}[lemma]{Proposition}
\newtheorem{remark}[lemma]{Remark}
\newtheorem{example}[lemma]{Example}
\journal{Arxiv}
\begin{document}
\renewcommand{\abstractname}{Abstract}
\renewcommand{\refname}{References}
\renewcommand{\tablename}{Table}
\renewcommand{\arraystretch}{0.9}
\thispagestyle{empty}
\sloppy

\begin{frontmatter}
\title{Divisible design graphs with selfloops}

\author[02]{Anwita Bhowmik}
\ead{bhowmikanwita@gmail.com}

\author[03]{Bart De Bruyn}
\ead{Bart.DeBruyn@UGent.be}

\author[01]{Sergey Goryainov}
\ead{sergey.goryainov3@gmail.com}

\address[01] {School of Mathematical Sciences, Hebei International Joint Research Center for Mathematics and Interdisciplinary Science, Hebei Key Laboratory of Computational Mathematics and Applications, Hebei Workstation for Foreign Academicians, Hebei Normal University, Shijiazhuang  050024, P.R. China}
\address[02] {Postdoctoral Research Station of Mathematics, Hebei Normal University, Shijiazhuang 050024, P.R. China}
\address[03] {Ghent University, Department of Mathematics, Computer Science and Statistics, Krijgslaan 281 (S9), 9000 Gent, Belgium}


\begin{abstract}
We develop a basic theory for divisible design graphs with possible selfloops (LDDG's), 
and describe two infinite families of such graphs, some members of which are also classical 
examples of divisible design graphs without loops (DDG's). Among the described 
theoretical results is a discussion of the spectrum, a classification of all examples 
satisfying certain parameter restrictions or having at most three eigenvalues, 
a discussion of the structure of the improper and the disconnected examples, 
and a procedure called dual Seidel switching which allows to construct new examples of LDDG's from others.
\end{abstract}

\begin{keyword}
divisible design graph; loop; projective space; polarity 
\vspace{\baselineskip}
\MSC[2010] 05C50\sep 05B05\sep 51E20
\end{keyword}
\end{frontmatter}

\section{Introduction} \label{sec1}

A $k$-regular graph $\Gamma$ on $v$ vertices is called a {\em divisible design graph} with parameters $(v,k,\lambda_1,\lambda_2,$ $m,n)$ if the vertex set $V$ of $\Gamma$ can be partitioned into $m$ classes of size $n$ such that any two distinct vertices of the same class have exactly $\lambda_1$ common neighbours and any two vertices of different classes have precisely $\lambda_2$ common neighbours. Divisible design graphs or DDG's for short were introduced in \cite{HKM} as a bridge between (group) divisible designs and graphs. In \cite{HKM}, the graphs under consideration were supposed to be undirected and without loops. A generalization of this concept to directed graphs was introduced in \cite{DK} and (further) studied under the name of {\em divisible design digraphs}. As far as the authors know, the concept has not been considered for graphs in which (self-)loops are allowed. Such a (self-)loop arises for every vertex that is adjacent to itself. It is common in the literature that a loop in a vertex $v$ contributes exactly $2$ to the degree $\deg(v)$ of $v$, while the neighbours distinct from $v$ still contribute 1 to this degree. For the treatment we give in the present paper, it will be necessary that we adopt the convention that loops also contribute exactly 1 to the degrees of the vertices.

Also the notion of complementary graph of a simple graph needs to be modified so that it naturally fits with the possible presence of loops. For a graph $\Gamma$ (with possible loops), its complement $\overline{\Gamma}$ will now be defined such that for every vertex $v$, the sum $\deg_\Gamma(x) + \deg_{\overline{\Gamma}}(x)$ of its degrees equals the total number of vertices. While the complementary graph $\Gamma$ of a $k$-regular simple graph $\Gamma$ on $v$ vertices is $(v-k-1)$-regular, for the graphs under consideration here, such a complementary graph will be $(v-k)$-regular. Following these conventions, we can then adopt the same definition as above for a divisible design graph in which loops are allowed. We will then talk about LDDG's instead of DDG's. The verification of the following claim is then obvious.

\begin{lemma} \label{lem1.1}
The complement of an LDDG with parameters $(v,k,\lambda_1,\lambda_2,m,n)$ is an LDDG with parameters $(v,v-k,v-2k+\lambda_1,v-2k+\lambda_2,m,n)$.
\end{lemma}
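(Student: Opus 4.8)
The plan is to work directly from the definitions, using the same vertex partition for $\overline{\Gamma}$ as for $\Gamma$. Let $\Gamma$ be an LDDG with parameters $(v,k,\lambda_1,\lambda_2,m,n)$, so $v=mn$, every vertex has degree $k$ (loops counting $1$), and the partition into $m$ classes of size $n$ has the stated common-neighbour properties. I would first record that $\overline{\Gamma}$ is $(v-k)$-regular: this is immediate from the modified complementation convention, since $\deg_\Gamma(x)+\deg_{\overline{\Gamma}}(x)=v$ for every vertex $x$. So the candidate first parameter $v-k$ is correct, and the partition still has $m$ classes of size $n$, giving the last two parameters for free.

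**Counting common neighbours.** The heart of the argument is an inclusion–exclusion count. Fix two distinct vertices $x,y$ and let $N_\Gamma(x)$ denote the neighbourhood of $x$ in $\Gamma$, \emph{including} $x$ itself precisely when $\Gamma$ has a loop at $x$; under our degree convention $|N_\Gamma(x)|=k$. The key observation is that under the new complementation convention, for each vertex $z$ we have $z\in N_{\overline{\Gamma}}(x)$ if and only if $z\notin N_\Gamma(x)$, and likewise $|N_{\overline{\Gamma}}(x)|=v-k$ matches the regularity just established; in particular $N_{\overline{\Gamma}}(x)=V\setminus N_\Gamma(x)$. Then the number of common neighbours of $x$ and $y$ in $\overline{\Gamma}$ is
\[
|N_{\overline{\Gamma}}(x)\cap N_{\overline{\Gamma}}(y)| = |V\setminus(N_\Gamma(x)\cup N_\Gamma(y))| = v - |N_\Gamma(x)| - |N_\Gamma(y)| + |N_\Gamma(x)\cap N_\Gamma(y)|.
\]
Since $|N_\Gamma(x)|=|N_\Gamma(y)|=k$, this equals $v-2k+|N_\Gamma(x)\cap N_\Gamma(y)|$, which is $v-2k+\lambda_1$ when $x,y$ lie in the same class and $v-2k+\lambda_2$ otherwise. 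This yields exactly the claimed parameters $(v,v-k,v-2k+\lambda_1,v-2k+\lambda_2,m,n)$.

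**The one subtlety to check.** The point that needs a moment's care — and the only place the loop convention really intervenes — is the claim that $N_{\overline{\Gamma}}(x)=V\setminus N_\Gamma(x)$ as a set-level identity for \emph{all} vertices, including $x$ itself: one must confirm that the loop at $x$ in $\overline{\Gamma}$ is present exactly when there is no loop at $x$ in $\Gamma$, so that the set complement is taken over the full vertex set $V$ with $x$ treated no differently from the other vertices. This is precisely what the paper's convention that loops contribute $1$ to the degree is designed to guarantee: with the ordinary convention the loop would contribute $2$ and the bookkeeping in the inclusion–exclusion step would fail. Once this is granted, the computation above is purely combinatorial and, as the paper remarks, the verification is routine; there is no real obstacle beyond being careful with the self-loop term in the counting.
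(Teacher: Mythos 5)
Your proof is correct and is exactly the routine verification the paper has in mind: the paper states Lemma \ref{lem1.1} without proof, remarking only that the verification is obvious under the loop conventions, and your inclusion--exclusion computation $|V\setminus(N_\Gamma(x)\cup N_\Gamma(y))|=v-2k+|N_\Gamma(x)\cap N_\Gamma(y)|$, together with the observation that the loop at $x$ flips between $\Gamma$ and $\overline{\Gamma}$, is precisely that check. Nothing further is needed.
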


\medskip Similarly as in \cite{HKM}, we will call an LDDG {\em proper} if $m,n \geq 2$ and $\lambda_1 \not= \lambda_2$. The classes of the LDDG are then unambiguously defined, and form a partition of the vertex set, to which we will refer as the {\em canonical partition}. In Section \ref{sec3}, we develop the basic theory of LDDG's with possible loops. In particular, we classify all proper LDDG's with $\lambda_1=k$ or having at most three distinct eigenvalues, and discuss the structure of disconnected examples. Following the treatment as in \cite{HKM}, we show that there are at most five distinct eigenvalues and show how these eigenvalues and their multiplicities can be computed from $\lambda_1$, $\lambda_2$, $m$, $n$, the number $L$ of loops in the graph and the total number $N^\ast$ of ordered pairs $(u,v)$, where $u$ and $v$ are possibly equal adjacent vertices belonging to the same class of the canonical partition. We also prove various (in)equalities between the parameters $v$, $k$, $\lambda_1$, $\lambda_2$, $L$, $N^\ast$, and discuss a procedure called dual Seidel switching that can result in new LDDG's.

In the following two theorems, we construct two families of LDDG's, hereby referring the reader to Section \ref{sec2} for the basic concepts on projective geometries that are necessary for the understanding and the mathematical proofs of these results.

\begin{theorem} \label{theo1.2}
Consider in the projective space $\PG(m-1,q)$, $m \geq 3$, a polarity $\zeta$. Let $\pi$ be a subspace of dimension $k-1 \geq 0$ which is totally isotropic with respect to $\zeta$. Let $\Gamma$ be the graph defined on $V := \PG(m-1,q) \setminus \pi^\zeta$, by calling two (possibly equal) vertices $x$ and $y$ adjacent whenever $x \in y^\zeta$. Then $\Gamma$ is a proper LDDG whose canonical partition consists of the classes of the equivalence relation $R$ on $V$, defined by $(x,y) \in R$ if and only if $\langle x,\pi \rangle = \langle y,\pi \rangle$.  
\end{theorem}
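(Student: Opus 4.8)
The plan is to translate every count into a dimension count in $\PG(m-1,q)$, using the standard facts that the polarity $\zeta$ is an inclusion-reversing involution sending a subspace of projective dimension $d$ to one of projective dimension $m-2-d$, and that it turns spans into intersections, $\langle A,B\rangle^\zeta = A^\zeta\cap B^\zeta$. Throughout write $\theta_d:=\frac{q^{d+1}-1}{q-1}$ for the number of points of $\PG(d,q)$, with $\theta_{-1}:=0$. First I would record the basic consequences of $\pi$ being totally isotropic: $\pi\subseteq\pi^\zeta$, so $\dim\pi\le\dim\pi^\zeta$ forces $m\ge 2k$, and hence (using $m\ge 3$) also $m\ge k+2$; moreover every $x\in V:=\PG(m-1,q)\setminus\pi^\zeta$ satisfies $x\notin\pi$. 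The equivalence $x\in y^\zeta\iff y\in x^\zeta$ shows $\Gamma$ is a well-defined graph in which loops are allowed, a loop occurring at $x$ precisely when $x$ is $\zeta$-absolute; with the paper's degree convention, $\deg(x)=|x^\zeta\cap V|$.

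The engine of the whole proof is the geometric lemma: for every $x\in V$, the span $\langle x,\pi\rangle$ has projective dimension $k$ and $\langle x,\pi\rangle\cap\pi^\zeta=\pi$. The dimension is clear since $x\notin\pi$; for the intersection, $\pi\subseteq\langle x,\pi\rangle$ and $\pi\subseteq\pi^\zeta$ give one inclusion, while a point of $\langle x,\pi\rangle\cap\pi^\zeta$ lying outside $\pi$ would make that intersection a subspace of dimension $\ge k$ through $\pi$, hence all of $\langle x,\pi\rangle$, forcing $x\in\pi^\zeta$, a contradiction. This lemma does double duty. On one hand, the $R$-class of $x$ is exactly $\langle x,\pi\rangle\setminus\pi$: this set lies in $V$ by the lemma, and conversely any $y\in V$ with $\langle y,\pi\rangle=\langle x,\pi\rangle$ lies in it. Thus $R$ partitions $V$ into classes each of size $\theta_k-\theta_{k-1}=q^k\ge 2$. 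On the other hand it gives regularity: the neighbours of $x$ form $x^\zeta\cap V = x^\zeta\setminus(x^\zeta\cap\pi^\zeta)=x^\zeta\setminus\langle x,\pi\rangle^\zeta$, of size $\theta_{m-2}-\theta_{m-k-2}$, independent of $x$.

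Next I would compute the two common-neighbour numbers. For distinct $x,y\in V$ the common neighbours form $x^\zeta\cap y^\zeta\cap V=\langle x,y\rangle^\zeta\setminus\pi^\zeta=\langle x,y\rangle^\zeta\setminus\langle x,y,\pi\rangle^\zeta$, of size $\theta_{m-3}-|\langle x,y,\pi\rangle^\zeta|$, since $\langle x,y\rangle$ is a line and $\langle x,y,\pi\rangle^\zeta\subseteq\langle x,y\rangle^\zeta$. Everything now hinges on $\dim\langle x,y,\pi\rangle$, and this is where the partition enters: if $(x,y)\in R$ then $y\in\langle x,\pi\rangle$, so $\langle x,y,\pi\rangle=\langle x,\pi\rangle$ has dimension $k$; if $(x,y)\notin R$ then $y\notin\langle x,\pi\rangle$, so $\langle x,y,\pi\rangle$ has dimension $k+1$. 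Hence the number of common neighbours is the constant $\lambda_1:=\theta_{m-3}-\theta_{m-k-2}$ when $(x,y)\in R$ and the constant $\lambda_2:=\theta_{m-3}-\theta_{m-k-3}$ when $(x,y)\notin R$, so $\Gamma$ is an LDDG with the classes of $R$ as a divisible-design partition.

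Finally, properness: a short calculation gives $\lambda_2-\lambda_1=\theta_{m-k-2}-\theta_{m-k-3}=q^{m-k-2}$, which is a positive integer because $m\ge k+2$, so $\lambda_1\ne\lambda_2$; the class size is $q^k\ge 2$; and the number of classes is $v/q^k=\theta_{m-k-1}-\theta_{m-2k-1}\ge\theta_{m-k-1}-\theta_{m-k-2}=q^{m-k-1}\ge 2$ (using $k\ge 1$ and $m\ge k+2$). Thus $\Gamma$ is a proper LDDG, and since the divisible-design partition of a proper LDDG is unique — it is the canonical partition — it must coincide with the $R$-partition, as claimed. I expect the main obstacle to be the careful bookkeeping of projective dimensions under $\zeta$, and especially the identity $\langle x,\pi\rangle\cap\pi^\zeta=\pi$: total isotropy of $\pi$ is used precisely here, this identity underlies the degree formula and both common-neighbour formulas, and one must also check the degenerate small cases where some $\theta_d$ vanishes.
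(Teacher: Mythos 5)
Your proposal is correct and takes essentially the same route as the paper: its Lemmas 4.1--4.4 perform the same counts on the perp side (via $x^\zeta\cap\pi^\zeta$ and $x^\zeta\cap y^\zeta\cap\pi^\zeta$), which is just the $\zeta$-dual of your bookkeeping with the spans $\langle x,\pi\rangle$ and $\langle x,y,\pi\rangle$. Your explicit checks of properness ($\lambda_2-\lambda_1=q^{m-k-2}>0$, class size $q^k\geq 2$, at least two classes) are left implicit in the paper, but this is a matter of presentation rather than a different method.
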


\begin{theorem} \label{theo1.3}
Consider in the projective space $\PG(m-1,q)$, $m,q \geq 3$, a polarity $\zeta$ and suppose $x^\ast$ is a point of $\PG(m-1,q)$ not contained in the hyperplane $H^\ast := (x^\ast)^\zeta$. Let $\Gamma$ be the graph defined on $V := \PG(m-1,q) \setminus (H^\ast \cup \{ x^\ast \})$, by calling two (possibly equal) vertices $x$ and $y$ adjacent whenever $x \in y^\zeta$. Then $\Gamma$ is a proper LDDG whose canonical partition consists of the classes of the equivalence relation $R$ on $V$, defined by $(x,y) \in R$ if and only if $x^\ast x = x^\ast y$.
\end{theorem}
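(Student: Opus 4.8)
The plan is to fix once and for all the partition $\mathcal{P}$ of $V$ into the sets $\ell \cap V$, where $\ell$ runs over the lines of $\PG(m-1,q)$ through $x^\ast$, verify that $\Gamma$ is an LDDG with respect to $\mathcal{P}$ with the appropriate parameters, check that this LDDG is proper, and then conclude from the uniqueness of the canonical partition of a proper LDDG (recalled just before the statement) that $\mathcal{P}$ is exactly the canonical partition, i.e.\ the partition into the classes of $R$. Throughout I will use the two standard facts about a polarity $\zeta$: it is an inclusion‑reversing involution on the subspace lattice, so in particular $x\in y^\zeta \Leftrightarrow y\in x^\zeta$ (hence $\Gamma$ is a well‑defined graph with loops, a loop occurring exactly at the absolute points of $\zeta$), and $x^\zeta \cap y^\zeta = \langle x,y\rangle^\zeta$ for any two points $x,y$.

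First I would record the point counts. Since $x^\ast\notin H^\ast$, every line through $x^\ast$ contains $x^\ast$ and meets $H^\ast$ in exactly one point, so its trace on $V$ has precisely $q-1$ points; these traces partition $V$, there are $\tfrac{q^{m-1}-1}{q-1}$ of them, and $|V|=q^{m-1}-1$. This gives $n=q-1$ and the number of classes $\tfrac{q^{m-1}-1}{q-1}$. Next, for $x\in V$ the (loop‑inclusive) neighbourhood is $N(x)=x^\zeta\cap V$. As $x\neq x^\ast$ we have $x^\zeta\neq H^\ast$, so $x^\zeta\cap H^\ast$ is a subspace of codimension $2$; and $x^\ast\notin x^\zeta$ because $x\notin H^\ast=(x^\ast)^\zeta$. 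Subtracting the corresponding point counts yields $|N(x)|=q^{m-2}$, independent of $x$, so $\Gamma$ is $q^{m-2}$‑regular; note that deleting the point $x^\ast$ from the ambient space does not affect this because $x^\ast$ is never in $x^\zeta$.

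For the intersection numbers, take distinct $x,y\in V$; their common neighbours are the points of $x^\zeta\cap y^\zeta\cap V=\langle x,y\rangle^\zeta\cap V$, where $\langle x,y\rangle^\zeta$ has (projective) dimension $m-3$. Observe that $x^\ast\in\langle x,y\rangle$ exactly when $(x,y)\in R$, and that, applying $\zeta$, this is equivalent to $\langle x,y\rangle^\zeta\subseteq H^\ast$. If $(x,y)\in R$, then $\langle x,y\rangle^\zeta\subseteq H^\ast$, which is disjoint from $V$, so the number of common neighbours is $0$ and $\lambda_1=0$. If $(x,y)\notin R$, then $\langle x,y\rangle^\zeta\not\subseteq H^\ast$, so $\langle x,y\rangle^\zeta\cap H^\ast$ has codimension $1$ in $\langle x,y\rangle^\zeta$; moreover $x^\ast\notin\langle x,y\rangle^\zeta$ since $x\in\langle x,y\rangle\not\subseteq H^\ast$. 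Subtracting point counts gives $q^{m-3}$ common neighbours, so $\lambda_2=q^{m-3}$ (one checks the degenerate case $m=3$, where $\langle x,y\rangle^\zeta$ is a single point, makes the formulas specialize consistently).

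Assembling this, $\Gamma$ is an LDDG with parameters $(q^{m-1}-1,\ q^{m-2},\ 0,\ q^{m-3},\ \tfrac{q^{m-1}-1}{q-1},\ q-1)$; since $q,m\geq 3$ we have $n=q-1\geq 2$, number of classes $\geq q+1\geq 4$, and $\lambda_1=0\neq q^{m-3}=\lambda_2$, so it is proper, and hence $\mathcal{P}$ is its canonical partition, as claimed. I do not expect a genuine obstacle here: the content is in organising the case split correctly and in the loop/degenerate‑dimension bookkeeping. The one point that must be handled with care is that the deleted point $x^\ast$ should never lie in the polar subspaces $x^\zeta$ or $\langle x,y\rangle^\zeta$ that govern the counts, so that its removal perturbs neither $k$ nor $\lambda_1,\lambda_2$; this is exactly where $V\cap H^\ast=\emptyset$ and the inclusion‑reversing property of $\zeta$ are used.
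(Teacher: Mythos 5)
Your proposal is correct and follows essentially the same route as the paper's Section~\ref{sec5}: count the vertices and the line-traces through $x^\ast$ to get the classes, show $q^{m-2}$-regularity via $x^\ast\notin x^\zeta$ and $x^\zeta\neq H^\ast$, and compute $\lambda_1=0$, $\lambda_2=q^{m-3}$ by intersecting polar hyperplanes with $H^\ast$ (your use of $x^\zeta\cap y^\zeta=\langle x,y\rangle^\zeta$ and $\langle x,y\rangle^\zeta\subseteq H^\ast \Leftrightarrow x^\ast\in xy$ is just a dual reformulation of the paper's Lemma~\ref{lem5.4}). The properness check and the appeal to uniqueness of the canonical partition are exactly what is needed to finish.
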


\medskip \noindent We will prove Theorems \ref{theo1.2} and \ref{theo1.3} in the respective Sections \ref{sec4} and \ref{sec5}. Note that the complementary graphs of the LDDG's defined in Theorems \ref{theo1.2} and \ref{theo1.3} are also LDDG's by Lemma \ref{lem1.1}. The constructed LDDG's and their complements usually have loops. However, there are also a few instances, where no loops at all occur, giving rise to classical examples of DDG's. One of those instances, arises by considering a symplectic polarity $\zeta$ and taking the complementary graph of the graph $\Gamma$ constructed in Theorem \ref{theo1.2}. This graph was first considered in \cite{GWL}, where it was shown to be a {\em Deza graph}, meaning that the number of common neighbours that two distinct vertices have only takes two values. The treatment given in Section \ref{sec3} can therefore be seen as a strengthening and generalization of this result. Additionally to what has been proved in \cite{GWL}, we also show here that the distribution of the two values of the numbers $|\Gamma(x) \cap \Gamma(y)|$ for distinct vertices $x$ and $y$ is compatible with a partition of the vertex set, necessarily being the canonical partition of a proper DDG. Other constructions of proper DDG's related to symplectic graphs can be found in the recent works \cite{Bh-Go,DB-Go-Ha-Sh,Ka}.   

\section{Basic notions on projective geometry} \label{sec2}

In this section, we review those basic notions and properties of finite projective geometries that are relevant for this paper. For proofs and more background information, we refer to the standard works \cite{Hir,Hi-Th}.  

For $m \in \N$, let $V$ be an $m$-dimensional vector space over the finite field $\F_q$. Associated with $V$, there is a projective space $\PG(V)=\PG(m-1,q)$, whose {\em points} consist of all $1$-dimensional subspaces of $V$. For every subspace $U$ of dimension $i+1$ of $V$, there is associated a so-called $i$-dimensional {\em projective subspace} $S_U$ of $\PG(V)$, which is the set consisting of all $1$-dimensional subspaces contained in $U$. We will then write $\dim(S_U)=i$. The projective subspaces of $\PG(V)$ will shortly be called the {\em subspaces} in the sequel. The projective space $\PG(V)$ itself is said to have dimension $m-1$. There are unique subspaces of dimension $-1$ and $m-1$, namely $S_{ \{ \bar o \}}=\emptyset$ and $S_V$. Every subspace of dimension 0 is a singleton consisting of a single point. The subspaces of dimension $1$, $2$ and $m-2$ are respectively called the {\em lines}, {\em planes} and {\em hyperplanes} of $\PG(V)$. Every two distinct points $x$ and $y$ of $\PG(V)$ are contained in a unique line which we will denote by $xy$.

The intersection of two subspaces $S_{U_1}$ and $S_{U_2}$ is again a subspace, namely the subspace $S_{U_1 \cap U_2}$. The smallest subspace containing $S_{U_1}$ and $S_{U_2}$ is equal to $S_{\langle U_1,U_2 \rangle}$ and is called the subspace of $\PG(V)$ {\em generated} by $S_{U_1}$ and $S_{U_2}$. We will denote this subspace also by $\langle S_{U_1},S_{U_2} \rangle$. The Grassmann identity for subspaces of a vector space immediately implies the {\em Grassmann identity for projective spaces}, namely that if $S_1,S_2$ are two projective subspaces, then $\dim(S_1)+\dim(S_2) = \dim(S_1 \cap S_2) + \dim(\langle S_1,S_2 \rangle)$. From this identity, it follows that if $H$ is a hyperplane and $\pi$ is an $i$-dimensional subspace not contained in $H$, then $\pi \cap H$ is an $(i-1)$-dimensional subspace. We will freely make use of these facts in the sequel.

Suppose $\pi$ is a subspace of $\PG(V)$ and $X$ is a set of points of a subspace $\pi'$ of $\PG(V)$ that is {\em complementary} to $\pi$ (meaning that $\pi \cap \pi' = \emptyset$ and $\langle \pi,\pi' \rangle = S_V$). Then the {\em cone} $\pi X$ with {\em kernel} $\pi$ and {\em basis} $X$ is the union of $\pi$ with all subspaces of the form $\langle \pi,x \rangle$, $x \in X$. Note that if $X \not= \emptyset$, then $\pi X = \bigcup_{x \in X} \langle \pi,x \rangle$. 

If $\kappa$ is a quadratic form on $V$, then the set of all points $\langle \bar x \rangle$ of $\PG(V)$ for which $\kappa(\bar x)=0$ is a {\em quadric} of $\PG(V)$, and such a quadric is called {\em nonsingular} if through each point $p \in Q$ there is at least one line intersecting $Q$ in exactly two points. If $w-1$ is the maximal projective dimension of a subspace contained in a given nonsingular quadric $Q$, then $w$ is called the {\em Witt index} of $Q$. There are three types of nonsingular quadrics in $\PG(V)=\PG(m-1,q)$. If $m$ is odd, then the quadric is a so-called {\em parabolic quadric} having Witt index $\frac{m-1}{2}$, which is usually denoted by $Q(m-1,q)$. The parabolic quadric $Q(0,q)$ of $\PG(0,q)$ is empty, while $Q(2,q)$ is also called an {\em irreducible conic} of $\PG(2,q)$. If $m$ is even, then the quadric has either Witt index $\frac{m}{2}$, in which case the quadric is called a {\em hyperbolic quadric} and denoted by $Q^+(m-1,q)$ or Witt index $\frac{m-2}{2}$ (with $m \geq 2$) in which case the quadric is called an {\em elliptic quadric} and denoted by $Q^-(m-1,q)$. In $\PG(1,q)$ and elliptic quadric $Q(1,q)$ is empty, while the hyperbolic quadric $Q^+(1,q)$ contains exactly two points. By convention, we put $Q^+(-1,q)$ equal to the empty point set, and leave $Q^-(-1,q)$ undefined. Putting $m$ equal to $2n+1$ or $2n$ depending on whether it is odd or even, and $n \geq 1$ for the elliptic case, these quadrics have the following sizes:
\[ |Q(2n,q)| = \frac{q^{2n}-1}{q-1},\  |Q^+(2n-1,q)| = \frac{q^{2n-1}-1}{q-1}+q^{n-1},\  |Q^-(2n-1,q)| = \frac{q^{2n-1}-1}{q-1}-q^{n-1}. \]

Suppose $q=r^2$ is a square and $b$ is a Hermitean form on $V$, implying that $b(\bar w,\bar v)=b(\bar v,\bar w)^r$ and $b(\lambda \bar v,\mu \bar w)=\lambda \mu^r \cdot b(\bar v,\bar w)$ for all $\bar v,\bar w \in V$ and all $\lambda,\mu \in \F_q$. Then the set $\mathcal{H}$ of all points $\langle \bar x \rangle$ of $\PG(V)$ for which $b(\bar x,\bar x)=0$ is a Hermitean variety of $\PG(V)$, and such a Hermitean variety $\mathcal{H}$ is called {\em nonsingular} if each point $x \in \mathcal{H}$ is contained in a line intersecting $\mathcal{H}$ is exactly $r+1$ points. Up to projective equivalence, there is only one type of nonsingular Hermitean variety in $\PG(m-1,q)$. This Hermitean variety is usually denoted by $H(m-1,q)$ and its Witt index (being defined similarly) is equal to either $\frac{m}{2}$ or $\frac{m-1}{2}$ depending on whether $m$ is even or odd. We have
\[ |H(m-1,q)| = \frac{(r^m+(-1)^{m+1})(r^{m-1}+(-1)^m)}{q-1}.  \]
The nonsingular Hermitean varieties $H(-1,q)$ and $H(0,q)$ in respectively $\PG(-1,q)$ and $\PG(0,q)$ are empty.
 
Suppose $m \geq 3$ and denote by $\Sigma$ the set of all subspaces of $\PG(V)$. An {\em automorphism} of $\PG(V)$ is a permutation $\theta$ of $\Sigma$, respecting the inclusion of subspaces, meaning that if $U_1,U_2 \in \Sigma$, then $U_1 \subseteq U_2$ if and only if $U_1^\theta \subseteq U_2^\theta$. An {\em anti-automorphism} of $\PG(V)$ is a permutation $\theta$ of $\Sigma$ reversing the inclusion relation, meaning that if $U_1,U_2 \in \Sigma$, then $U_1 \subseteq U_2$ if and only if $U_2^\theta \subseteq U_1^\theta$. An anti-automorphism $\theta$ for which $\theta^2$ is the identity is called a {\em polarity}. If $\zeta$ is a polarity and $\pi_1,\pi_2$ are two subspaces such that $\pi_1 \subseteq \pi_2^\zeta$, then we must have $\pi_2 \subseteq \pi_1^\zeta$. A subspace $\pi$ is called {\em totally isotropic} with respect to a polarity $\zeta$ if $\pi \subseteq \pi^\zeta$.

There are four types of polarities in a finite projective spaces. Each of these polarities is obtained in a similar way, namely by considering some nondegenerate bilinear or Hermitean form $b$ on $V$. For every subspace $U$ of $V$, we denote by $U^\perp$ the subspace consisting of all vectors $\bar v$ for which $b(\bar u,\bar v)=0$ for all $\bar u \in U$. Then the map $S_{U} \mapsto S_{U^\perp}$ is a polarity $\zeta_b$.

\medskip \noindent (1) If $b$ is a nondegenerate alternating bilinear form, then $m$ must be even and $b(\bar v,\bar v)=0$, $b(\bar v,\bar w)=-b(\bar w,\bar v)$ for $\bar v,\bar w \in V$. In this case, $\zeta_b$ is called a {\em symplectic polarity}. If $\zeta$ is a symplectic polarity, then $x \in x^\zeta$ for every point $x$ of $\PG(V)$.

\medskip \noindent (2) Suppose $q$ is even and $b$ is a nondegenerate symmetric bilinear form such that there is at least one $\bar v$ for which $b(\bar v,\bar v) \not= 0$ (without that latter condition, we would be in case (1) again). In this case, $\zeta_b$ is called a {\em pseudo-polarity}. If $\zeta$ is a pseudo-polarity, then in the finite case (as we consider here) the set of all points $x$ for which $x \in x^\zeta$ is a hyperplane $H^\ast$. The point $(H^\ast)^\zeta$ is then contained in the hyperplane $H^\ast$ if and only if $n$ is odd. 
 
\medskip \noindent (3)  Suppose $q$ is odd and $b$ is a nondegenerate symmetric bilinear form. In this case, $\zeta_b$ is called an {\em orthogonal polarity}. If $\zeta$ is an orthogonal polarity, then the set of all points $x$ for which $x \in x^\zeta$ is a nonsingular quadric $Q$ of $\PG(V)$ (which is of one of the three mentioned types). If $\pi$ is a subspace contained in $Q$, then $\pi^\zeta$ intersects $Q$ in a cone of type $\pi Q'$, where $Q'$ is a nonsingular quadric of the same type of $Q$ (parabolic, hyperbolic or elliptic) in a subspace of $\pi^\zeta$ that is complementary to $\pi$.

\medskip \noindent (4) Suppose $q=r^2$ is a square and $b$ is a nondegenerate Hermitean form. In this case, $\zeta_b$ is called a {\em unitary} or {\em Hermitean polarity}. If $\zeta$ is a unitary polarity, then the set of all points $x$ of $\PG(V)$ for which $x \in x^\zeta$ is a {\em nonsingular Hermitean variety} $\mathcal{H}$. If $\pi$ is a subspace contained in $\mathcal{H}$, then $\pi^\zeta$ intersects $\mathcal{H}$ in a cone of type $\pi \mathcal{H}'$, where $\mathcal{H}'$ is a nonsingular Hermitean variety in a subspace of $\pi^\zeta$ that is complementary to $\pi$.
 
 \section{Divisible design graphs with possible loops} \label{sec3}
 
DDG's were introduced in the paper \cite{HKM}, where some of their basic properties were derived. We give here a summary of some of the results from \cite{HKM} that extend to LDDG's, and derive several additional new results. Some of these results make use of the following notational convention: if $G$ is a loopless graph, then $\widetilde{G}$ denotes the graph obtained from $G$ by adding a loop to all of its vertices. Starting from the complete graph $K_l$, $l \geq 1$, and the complete bipartite graph $K_{l_1,l_2}$, $l_1,l_2 \geq 1$, we find in this way the graphs $\widetilde{K_l}$ and $\widetilde{K_{l_1,l_2}}$. Note also that the spectrum of $\widetilde{G}$ is obtained by adding 1 to the eigenvalues of the spectrum of $G$. 

\subsection{Basic (in)equalities and classification results} \label{sec3.1}

Recall that by definition, the following conditions certainly hold in a proper LDDG $\Gamma$ with parameters $(v,k,\lambda_1,\lambda_2,m,n)$:
\[ m \geq 2, \qquad n \geq 2, \qquad v=mn \geq 4, \qquad \lambda_1 \not= \lambda_2.  \]
The following proposition shows that also $k$ can be expressed in terms of $\lambda_1$, $\lambda_2$, $m$ and $n$.

\begin{proposition} \label{prop3.1}
We have $k(k-1) = (n-1) \lambda_1 + (m-1)n \lambda_2$. 
\end{proposition}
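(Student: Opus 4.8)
The idea is a standard double-counting argument applied to the number of paths of length $2$ emanating from a fixed vertex. Fix a vertex $x \in V$, and count in two ways the number $P$ of ordered pairs $(y,z)$ of (not necessarily distinct) vertices with $y,z \in \Gamma(x)$ and $z \in \Gamma(y)$; equivalently, $P$ is the number of walks $x - y - z$ of length $2$ in $\Gamma$ starting at $x$, where we also allow $z = x$. Note that since we are working with LDDG's in which loops contribute $1$ to the degree, every vertex in $\Gamma(x)$ (including $x$ itself, should there be a loop at $x$) is counted with its natural multiplicity, and we should be slightly careful about which walks are allowed; I expect the cleanest bookkeeping is obtained by counting ordered pairs $(y,z)$ with $y \in \Gamma(x)$, $z \in \Gamma(x)$, $z \in \Gamma(y)$, i.e.\ the number of vertices $z$ lying in $\Gamma(x) \cap \Gamma(y)$ summed over $y \in \Gamma(x)$.

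\textbf{First count.} Summing over $y \in \Gamma(x)$, the inner quantity is $|\Gamma(x) \cap \Gamma(y)|$. Now partition $\Gamma(x)$ according to whether a neighbour $y$ lies in the same class of the canonical partition as $x$ or in a different class. If $y$ is in the same class as $x$ (there are $k - (\text{number of neighbours of }x\text{ outside its class})$ such neighbours — more precisely, there are $a$ such neighbours for some $a$ depending on $x$), then $|\Gamma(x) \cap \Gamma(y)| = \lambda_1$; if $y$ is in a different class, then $|\Gamma(x) \cap \Gamma(y)| = \lambda_2$. The subtle point — and this is where one must use the convention that loops add $1$ to the degree, and also decide how to treat the term $y = x$ — is getting the counts of ``same-class'' versus ``different-class'' neighbours right, but in fact this subtlety cancels out: the key observation is that the number of neighbours of $x$ in its own class, \emph{excluding} $x$ itself, is $n-1$ when we instead count the contribution the other way, via the classes. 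Concretely, one instead writes $P = \sum_{y \in \Gamma(x)} |\Gamma(x)\cap\Gamma(y)|$ and re-expresses it so that the defining equalities $\lambda_1, \lambda_2$ apply to \emph{pairs of distinct vertices}; the honest way to do this is the second count below, and then compare.

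\textbf{Second count.} Reverse the roles: $P = \sum_{z} |\{ y : y \in \Gamma(x), \, y \in \Gamma(z) \}|$ where $z$ ranges over $\Gamma(x)$, i.e.\ $P = \sum_{z \in \Gamma(x)} |\Gamma(x) \cap \Gamma(z)|$. This is the same sum, so instead the real second count is: $P$ also equals $\sum_{y \in \Gamma(x)} \deg(y)$ restricted appropriately — no. Let me state it correctly: count $P$ by first choosing $z$. For $z = x$: the pairs are $(y,x)$ with $y \in \Gamma(x) \cap \Gamma(x) = \Gamma(x)$, contributing $k$ (here we use that $x \in \Gamma(x)$ or not according to a loop, but $|\Gamma(x)| = k$ regardless by regularity). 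For $z \neq x$ with $z$ in the same class as $x$: there are $n-1$ such $z$, each contributing $|\Gamma(x)\cap\Gamma(z)| = \lambda_1$. For $z$ in a different class: there are $(m-1)n$ such $z$, each contributing $\lambda_2$. Summing, $P = k + (n-1)\lambda_1 + (m-1)n\,\lambda_2$, but $P$ also equals $\sum_{y\in\Gamma(x)} 1 \cdot |\Gamma(x)\cap\Gamma(y)|$ computed from the $y$-side, which by the same decomposition plus the regularity of $\Gamma$ gives $P = \sum_{y \in \Gamma(x)} \deg_{\Gamma(x)}(y)$... The clean route is: the number of ordered pairs $(y,z)$ with $x \sim y$, $y \sim z$, $x \sim z$ equals, on one hand, $k \cdot (k-1) + (\text{loop corrections})$ by counting per $y$ (each $y \in \Gamma(x)$ contributes its number of common neighbours with $x$), and on the other hand $k + (n-1)\lambda_1 + (m-1)n\lambda_2$ by the class decomposition above, where the leading $k$ is the $z=x$ contribution and must be matched against the analogous $y=x$ term on the other side. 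Equating the two expressions and cancelling the $z=x$/$y=x$ boundary terms against the loop corrections yields exactly $k(k-1) = (n-1)\lambda_1 + (m-1)n\,\lambda_2$.

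\textbf{Main obstacle.} The only real difficulty is the careful bookkeeping of the diagonal/loop terms: because loops here contribute $1$ (not $2$) to degrees, and because $x$ may or may not be adjacent to itself, one must verify that the various ``$y=x$'' and ``$z=x$'' contributions on the two sides of the count cancel precisely, leaving the stated clean identity with no correction term. A convenient device is to work with the adjacency matrix $A$ of $\Gamma$ (a symmetric $0/1$ matrix, possibly with $1$'s on the diagonal): regularity says $AJ = JA = kJ$ on the relevant space / $A\mathbf{1} = k\mathbf{1}$, and the divisible design condition says $A^2 = kI + \lambda_1(A_{\text{in}}) + \lambda_2(A_{\text{out}})$ \emph{off the diagonal}, where $A_{\text{in}}$ and $A_{\text{out}}$ encode same-class and different-class pairs; reading off the $(x,x)$ entry of $A^2$, which equals $\deg(x) = k$ counted as the number of closed walks... actually $(A^2)_{xx} = \sum_y A_{xy}^2 = \sum_y A_{xy} = k$, while reading off all entries in row $x$ and summing gives $\sum_z (A^2)_{xz} = k^2$. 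Splitting the latter sum into the diagonal entry ($=k$), the same-class off-diagonal entries ($(n-1)$ of them, each $=\lambda_1$), and the different-class entries ($(m-1)n$ of them, each $=\lambda_2$) gives $k^2 = k + (n-1)\lambda_1 + (m-1)n\lambda_2$, which is the claim. This matrix phrasing sidesteps the loop subtleties entirely, since $A_{xy}^2 = A_{xy}$ whether or not $x=y$, and I expect the write-up to follow this line.
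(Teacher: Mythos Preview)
Your final matrix argument is correct and is essentially the same double-count as the paper's: the paper counts ordered triples $(a,b,c)$ with $a\neq b$ and $c$ a common neighbour of $a$ and $b$, getting $v\cdot k(k-1)=v\cdot\big((n-1)\lambda_1+(m-1)n\lambda_2\big)$, while you fix a single vertex $x$ and read off the row sum $\sum_z (A^2)_{xz}=k^2=k+(n-1)\lambda_1+(m-1)n\lambda_2$. The exploratory first two ``counts'' in your write-up are muddled and should be deleted; the clean content is entirely in your last paragraph, and the observation $(A^2)_{xx}=\sum_y A_{xy}^2=\sum_y A_{xy}=k$ is exactly what makes the loop bookkeeping disappear.
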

\begin{proof}
By double counting the ordered triples $(a,b,c)$ such that $a$ and $b$ are two distinct vertices and $c$ is a common neighbour of $a$ and $b$ (possibly equal to them), yields $v \cdot (n-1) \cdot \lambda_1 + v \cdot (m-1)n \cdot \lambda_2 = v \cdot k \cdot (k-1)$, from which the claim follows.
\end{proof}

\medskip \noindent The following is an immediate consequence of Lemma \ref{lem1.1}.

\begin{proposition} \label{prop3.2}
The complement of a proper LDDG is a proper LDDG.
\end{proposition}

\begin{proposition} \label{prop3.3}
We have $2 \leq k \leq v-2$.
\end{proposition}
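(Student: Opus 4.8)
I want to show $2 \le k \le v-2$ for a proper LDDG $\Gamma$ with parameters $(v,k,\lambda_1,\lambda_2,m,n)$. By Proposition~\ref{prop3.2}, the complement $\overline{\Gamma}$ is also a proper LDDG, and it has valency $v-k$ (by the conventions adopted in Section~\ref{sec1} and Lemma~\ref{lem1.1}); hence it suffices to prove the single inequality $k \ge 2$, and the upper bound $k \le v-2$ then follows by applying this to $\overline{\Gamma}$.

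So I would focus on ruling out $k=0$ and $k=1$. The case $k=0$ is immediate: then $\Gamma$ has no edges at all, so $\lambda_1 = \lambda_2 = 0$, contradicting $\lambda_1 \neq \lambda_2$. For $k=1$, I would use Proposition~\ref{prop3.1}, which gives $k(k-1) = (n-1)\lambda_1 + (m-1)n\lambda_2$. With $k=1$ the left-hand side is $0$, and since $m,n \ge 2$ both coefficients $n-1$ and $(m-1)n$ are positive, we are forced to have $\lambda_1 = \lambda_2 = 0$, again contradicting properness. Alternatively, one can argue directly: a $1$-regular graph (under the convention that a loop contributes $1$ to the degree) is a disjoint union of single edges and single looped vertices, and any two vertices have $0$ common neighbours, so $\lambda_1 = \lambda_2 = 0$.

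**Main obstacle.** There is essentially no obstacle here; the only thing to be slightly careful about is the bookkeeping with the loop convention, namely that "valency $k$" counts a loop once, so that a $1$-regular graph really does have the simple structure described and so that $\overline{\Gamma}$ is genuinely $(v-k)$-regular as asserted in Section~\ref{sec1}. Once that is granted, both bounds are one-line consequences of properness together with Propositions~\ref{prop3.1} and~\ref{prop3.2}. I would therefore present the proof as: reduce to the lower bound via complementation, dispatch $k=0$ trivially, and dispatch $k=1$ using $k(k-1)=(n-1)\lambda_1+(m-1)n\lambda_2$ with $m,n\ge 2$ forcing $\lambda_1=\lambda_2=0$.
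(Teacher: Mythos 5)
Your proposal is correct and follows essentially the same route as the paper: reduce to ruling out $k=0$ and $k=1$ via Proposition~\ref{prop3.2}, and show that in those cases $\lambda_1=\lambda_2=0$, contradicting properness. The only (harmless) difference is that for $k=1$ you primarily invoke the identity of Proposition~\ref{prop3.1}, whereas the paper argues structurally that a $1$-regular graph is a disjoint union of looped vertices and single edges --- an argument you also give as an alternative.
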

\begin{proof}
In view of Proposition \ref{prop3.2}, we need to prove that $k \not= 0$ and $k \not= 1$. If $k=0$, then $\lambda_1=\lambda_2=0$. If $k=1$, then $\Gamma$ is a disjoint union of loops and $K_1$'s, and again we have $\lambda_1=\lambda_2=0$. This is in contradiction with the condition $\lambda_1 \not= \lambda_2$ for a proper LDDG.
\end{proof}

\medskip \noindent We obviously have $0 \leq \lambda_1 \leq k$. We now construct an infinite family of proper LDDG's for which $k=\lambda_1$. Let $\mathcal{D}$ be a symmetric design \cite{Be-Ju-Le} on the vertex set $\{ 1,2,\ldots,m \}$, $m \geq 3$, all whose $m$ blocks have size $l_1$ such that every two distinct blocks intersect in exactly $l_2$ points, with $0 < l_2 < l_1 < m$. Let $X_1,X_2,\ldots,X_m$ be mutually disjoint sets of size $n \geq 2$ and let $\theta$ be a bijection between $\{ 1,2,\ldots,m \}$ and the set of blocks such that if $i,j \in \{ 1,2,\ldots,m \}$, then $i \in j^\theta$ if and only if $j \in i^\theta$. Then consider the graph on the vertex set $X_1 \cup X_2 \cup \cdots \cup X_m$, where two possibly equal vertices $u$ and $v$ are adjacent whenever $j \in i^\theta$ if $i,j \in \{ 1,2,\ldots,m \}$ such that $u \in X_i$ and $v \in X_j$. Then obviously $\Gamma$ is a proper LDDG with parameters $(mn,l_1n,l_1n,l_2n,m,n)$ and canonical partition $\{ X_1,X_2,\ldots,X_m \}$. The number of loops in $\Gamma$ is $n$ times the number of $i$ for which $i \in i^\theta$.

Another trivial infinite family of such LDDG's can be obtained as a disjoint union of $m_1 \geq 0$ isomorphic copies of $\widetilde{K_n}$ and $m_2 \geq 0$ isomorphic copies of $K_{n,n}$, with $n \geq 2$ and $m = m_1 + 2m_2 \geq 2$. Then $\Gamma$ is a proper LDDG with parameters $(mn,n,n,0,m,n)$ whose canonical partition consists of all the $\widetilde{K_n}$'s and all maximal cocliques of the $K_{n,n}$'s.

\begin{proposition} \label{prop3.4}
We have $0 \leq \lambda_1 \leq k$. If $\Gamma$ is a proper LDDG with parameter $\lambda_1$ equal to $k$, then $\Gamma$ can be obtained from a symmetric design in the above-described way, or is a disjoint union of $\widetilde{K_n}$'s and $K_{n,n}$'s. 
\end{proposition}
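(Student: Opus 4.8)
The plan is to first dispose of the bound $0 \le \lambda_1 \le k$ (it is trivial: $\lambda_1$ is a cardinality, hence nonnegative, and the set of common neighbours of two distinct vertices $u,v$ of one class is a subset of $\Gamma(u)$, which has size $k$), and then to analyze the equality case. So from now on let $\Gamma$ be a proper LDDG with $\lambda_1 = k$ and canonical partition $\{X_1,\dots,X_m\}$.

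The crucial first observation I would make is that all vertices of a fixed class share a common neighbourhood: for distinct $u,v \in X_i$ the set $\Gamma(u)\cap\Gamma(v)$ has cardinality $\lambda_1 = k = |\Gamma(u)| = |\Gamma(v)|$ and is contained in both $\Gamma(u)$ and $\Gamma(v)$, hence $\Gamma(u)=\Gamma(v)=:N_i$. Next I would show that $N_i$ is a union of classes: if $w \in N_i$ lies in $X_l$, then every vertex of $X_i$ is adjacent to $w$, so $X_i \subseteq N_l$, and by symmetry of adjacency also $X_l \subseteq N_i$. Writing $N_i = \bigcup_{l \in B_i} X_l$ with $B_i \subseteq \{1,\dots,m\}$, one reads off $|B_i| = k/n =: l_1$ from $|N_i|=k$, and $|B_i \cap B_j| = \lambda_2/n =: l_2$ for $i \ne j$ from the fact that a vertex of $X_i$ and a vertex of $X_j$ have exactly $|N_i \cap N_j|$ common neighbours. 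Two more facts I would record: the relation $i \in B_l \Leftrightarrow l \in B_i$ (each side expresses that all of $X_i$ is adjacent to all of $X_l$); and the $B_i$ are pairwise distinct, since $N_i = N_j$ with $i \ne j$ would force $\lambda_2 = |N_i| = k = \lambda_1$ against properness --- so in particular $l_2 < l_1$, and moreover $l_1 n = k \le v-2 < mn$ by Proposition \ref{prop3.3}, i.e.\ $l_1 < m$.

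The proof then splits according to whether $l_2 = 0$. If $l_2 \ge 1$, then $0 < l_2 < l_1 < m$ (and $m \ge 3$ since $m > l_1 > l_2 \ge 1$), the blocks $B_1,\dots,B_m$ form a symmetric design on $\{1,\dots,m\}$ with block size $l_1$ and pairwise intersection $l_2$, the bijection $\theta\colon i \mapsto B_i$ satisfies $i \in j^\theta \Leftrightarrow j \in i^\theta$, and comparing adjacency in $\Gamma$ with the recipe preceding the proposition shows that $\Gamma$ is exactly the graph built from this design, $\theta$ and the sets $X_i$. If $l_2 = 0$, the blocks are pairwise disjoint; point $i$ lies in exactly the blocks $B_l$ with $l \in B_i$, hence in exactly $l_1$ of them, and since two distinct blocks through a common point cannot be disjoint we get $l_1 \le 1$, so $l_1 = 1$ (as $l_1 n = k \ge 2$). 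Then $B_i = \{f(i)\}$ for a map $f$ on $\{1,\dots,m\}$, the relation $i \in B_l \Leftrightarrow l \in B_i$ makes $f$ an involution, and $\Gamma$ is the disjoint union of the subgraphs induced on the $X_i$ with $f(i)=i$ (each a copy of $\widetilde{K_n}$) and on the $X_i \cup X_{f(i)}$ with $f(i) \ne i$ (each a copy of $K_{n,n}$). The step that needs the most care throughout is keeping the loop convention consistent --- so that $|\Gamma(u)| = k$ even when $u$ carries a loop and that the common neighbours of $u,v$ may include $u$ or $v$ --- while the only genuinely combinatorial move is the deduction $l_1 = 1$ in the case $l_2 = 0$; the remaining work is bookkeeping with cardinalities and unwinding definitions.
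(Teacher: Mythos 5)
Your proof is correct and follows essentially the same route as the paper: from $\lambda_1=k$ you deduce that adjacency only depends on the classes (your $B_i$ is the paper's $i^\theta$), compute $|B_i|=k/n$ and $|B_i\cap B_j|=\lambda_2/n$, and split according to $l_2>0$ (symmetric design) or $l_2=0$ (forcing $l_1=1$ and the $\widetilde{K_n}$/$K_{n,n}$ decomposition). The only cosmetic differences are that you get $l_2<l_1$ directly from $N_i=N_j\Rightarrow\lambda_2=k$ and $l_1<m$ from $k\le v-2$, where the paper argues via degree considerations; both are equally valid.
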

\begin{proof}
Let $\{ X_1,X_2,\ldots,X_m \}$ denote the canonical partition of the LDDG. As $\lambda_1=k$, we know that if $u_1,u_2,v$ are vertices such that $u_1,u_2$ belong to the same class, then $u_1 \sim v$ if and only if $u_2 \sim v$. This implies that there is a map $\theta$ between $\{ 1,2,\ldots,m \}$ and a set of subsets of $\{ 1,2,\ldots,m \}$ such that for vertices $u$ and $v$, it holds that $u \sim v$ if and only if $j \in i^\theta$ if $i,j \in \{ 1,2,\ldots,m \}$ such that $u \in X_i$ and $v \in X_j$. Note that as adjacency is a symmetric relation, we have $j \in i^\theta$ if and only if $i \in j^\theta$ for $i,j \in \{ 1,2,\ldots,m \}$. As $\Gamma$ is $k$-regular, we have $|i^\theta| = l_1 := \frac{k}{n}$ for every $i \in \{ 1,2,\ldots,m \}$. As $\lambda_1 \not= \lambda_2$, it is impossible that every vertex of $\Gamma$ is adjacent to every other vertex, and we have that $l_1 < m$.

Now, take two distinct $i,j \in \{ 1,2,\ldots,m \}$ and let $u \in X_i$ and $v \in X_j$. As $u$ and $v$ have exactly $\lambda_2$ common neighbours, we have that $|i^\theta \cap j^\theta| = l_2 := \frac{\lambda_2}{n}$. Obviously, we have $\lambda_2 \leq \lambda_1$. If $\lambda_2 = \lambda_1$, then all $i^\theta$'s are equal to the same set. If $j \in \{ 1,2,\ldots,m \}$ does not belong to this set, then no vertex of $X_j$ can be adjacent to another vertex, implying that $k=0$ in contradiction with Proposition \ref{prop3.3}. So, $\lambda_2 < \lambda_1$ and $l_2 < l_1$. We thus have a collection of $m$ mutually distinct subsets of size $l_1 < m$ which two by two intersect in $l_2 < l_1$ elements. If $l_2 > 0$, this gives rise to a symmetric design. If $l_2=0$, then the $i^\theta$'s are mutually disjoint, partitioning the set $\{ 1,2,\ldots,m \}$, implying that $l_1=1$. In this case, we get a disjoint union of $\widetilde{K_n}$'s and $K_{n,n}$'s. 
\end{proof}

\begin{proposition} \label{prop3.5}
We have $0 \leq \lambda_2 \leq k-1$.
\end{proposition}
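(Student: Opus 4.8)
The bound $0\le\lambda_2$ is part of the definition, and since the $\lambda_2$ common neighbours of two vertices lying in different classes form a subset of the neighbourhood of either of them, we get $\lambda_2\le k$ for free (here and below, for a vertex $x$ I write $\Gamma(x)$ for its neighbour set, which by the loop convention has exactly $k$ elements, a loop at $x$ placing $x$ into $\Gamma(x)$). So the plan is to rule out the single remaining possibility $\lambda_2=k$.

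The key step will be to show that $\lambda_2=k$ forces all vertices of $\Gamma$ to have one and the same neighbourhood. Indeed, if $u$ and $v$ lie in different classes of the canonical partition, then $\Gamma(u)\cap\Gamma(v)$ has size $\lambda_2=k=|\Gamma(u)|=|\Gamma(v)|$, which forces $\Gamma(u)=\Gamma(v)$; and to transfer this equality to two vertices $x,y$ of the same class $X_i$, I would pick a vertex $z$ in some other class (possible since $m\ge 2$), so that $\Gamma(x)=\Gamma(z)=\Gamma(y)$. Hence there is a fixed $k$-subset $N$ of the vertex set with $\Gamma(x)=N$ for every vertex $x$. Now invoke Proposition \ref{prop3.3}: since $k\ge 2$, the set $N$ is nonempty, so pick $y\in N$; for an arbitrary vertex $x$ we have $y\in N=\Gamma(x)$, hence $x\sim y$, and by symmetry of adjacency $x\in\Gamma(y)=N$. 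Thus every vertex lies in $N$, i.e. $N$ is the whole vertex set and $k=|N|=v$, contradicting $k\le v-2$. Therefore $\lambda_2\neq k$ and $\lambda_2\le k-1$.

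The proof needs no computation; the only points requiring care — and the crux of the matter — are the bookkeeping imposed by the loop convention (so that $|\Gamma(x)|=k$ literally, a possible loop included) and the remark that $m\ge 2$ already suffices to propagate the equality of neighbourhoods through a common class. An essentially equivalent alternative would be to observe that under $\lambda_2=k$ the same argument yields a common neighbourhood and hence $\lambda_1=k$ as well, and then to derive a contradiction from the classification in Proposition \ref{prop3.4}; but the direct route above is shorter, and I anticipate no genuine obstacle either way.
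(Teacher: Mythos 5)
Your proof is correct and follows essentially the same route as the paper: assume $\lambda_2=k$, deduce that vertices in different classes share a neighbourhood, propagate this to vertices in the same class via a third vertex in another class, and then contradict Proposition \ref{prop3.3} because a vertex in the common neighbourhood is adjacent to all $v$ vertices. Your treatment is only slightly more explicit (nonemptiness of the common neighbourhood via $k\geq 2$, the role of the loop convention), but the argument is the same.
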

\begin{proof}
We obviously have $0 \leq \lambda_2 \leq k$. We need to show that the case $\lambda_2=k$ cannot occur. If $\lambda_2=k$, then two vertices belonging to different classes of the canonical partition have the same set of neighbours. Applying this observation two times, we see that also every two vertices of the same class have the same set of neighbours. A vertex in this common set of neighbours is then adjacent to $v$ vertices, in contradiction with Proposition \ref{prop3.3}.
\end{proof}

\medskip \noindent The upper bound in Proposition \ref{prop3.5} will be strengthened in Proposition \ref{prop3.18}.

\begin{proposition} \label{prop3.6}
\begin{enumerate}
\item[$(1)$] Let $\Gamma$ be a $k$-regular connected graph (with possible loops). Then $k$ is an eigenvector of $\Gamma$ with multiplicity 1 and corresponding eigenvector $[1,1,\ldots,1]^t$.
\item[$(2)$] Let $\Gamma$ be a connected graph with possible loops having $s$ mutually distinct eigenvalues. Then the diameter of $\Gamma$ is less than $s$. 
\end{enumerate}
\end{proposition}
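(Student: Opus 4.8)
The plan is to work throughout with the adjacency matrix $A$ of $\Gamma$, adopting the convention (forced by the loop convention of the paper) that a loop at a vertex $v$ contributes a single $1$ to the diagonal entry $A_{vv}$, so that the row sums of $A$ are exactly the degrees. Since $\Gamma$ is undirected, $A$ is a real symmetric matrix; hence it is diagonalizable over $\R$, has only real eigenvalues, and its minimal polynomial equals $\prod_{i=1}^{s}(x-\theta_i)$, where $\theta_1,\dots,\theta_s$ are the distinct eigenvalues. In particular this minimal polynomial has degree exactly $s$. These two facts — the row-sum identity and the degree-$s$ minimal polynomial — will drive parts (1) and (2) respectively.

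For part (1), $k$-regularity says every row sum of $A$ equals $k$, so $A\,[1,1,\dots,1]^t = k\,[1,1,\dots,1]^t$ and $k$ is an eigenvalue with eigenvector $\mathbf{1}:=[1,1,\dots,1]^t$. To see that its multiplicity is $1$, I would take an arbitrary real eigenvector $x$ for $k$ and choose a vertex $u$ at which $|x_u|$ is maximal. From $k\,x_u=\sum_{w\in\Gamma(u)}x_w$ (the term for $w=u$ being present precisely when $u$ carries a loop) we get $k\,|x_u|\le\sum_{w\in\Gamma(u)}|x_w|\le k\,|x_u|$, so both inequalities are equalities; the first then forces all $x_w$ with $w\in\Gamma(u)$ to share a common sign and common modulus $|x_u|$, and the defining equation identifies that common value as $x_u$. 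Thus $x_w=x_u$ for every $w\in\Gamma(u)$, and by connectivity $x$ is a constant multiple of $\mathbf{1}$, so the $k$-eigenspace is one-dimensional.

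For part (2), let $d$ be the diameter of $\Gamma$, which is finite since $\Gamma$ is connected, and pick vertices $u,v$ with $d(u,v)=d$. Because the entries of $A^i$ are nonnegative integers counting walks of length $i$, we have $(A^i)_{uv}=0$ for $0\le i\le d-1$ (a walk of length $i<d$ would yield a path of length $<d$), while $(A^d)_{uv}\ge 1$ thanks to the geodesic from $u$ to $v$. Consequently $A^d$ is not a linear combination of $I,A,\dots,A^{d-1}$, so the minimal polynomial of $A$ has degree at least $d+1$; comparing with the degree-$s$ statement of the first paragraph gives $s\ge d+1$, i.e. $\mathrm{diam}(\Gamma)=d<s$.

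Both arguments are standard; the only real care needed is the bookkeeping imposed by the loop convention — verifying that a loop adds $1$ to a diagonal entry of $A$ without destroying symmetry and without disturbing either the walk count or the row-sum identity — and, in part (1), making sure that equality in $k|x_u|\le\sum_{w\in\Gamma(u)}|x_w|\le k|x_u|$ genuinely yields $x_w=x_u$ rather than merely $|x_w|=|x_u|$, which is exactly the sign-coherence point noted above. I do not expect any of these to present a serious obstacle.
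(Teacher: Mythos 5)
Your proposal is correct, and it is exactly the argument the paper has in mind: the paper's ``proof'' of Proposition \ref{prop3.6} is only a citation of the classical loopless results (Brouwer--Haemers) together with the remark that the proofs extend verbatim to graphs with loops, and what you have written out --- the row-sum/maximum-modulus argument for simplicity of the eigenvalue $k$, and the walk-counting/minimal-polynomial-degree argument for $\mathrm{diam}(\Gamma)<s$ --- is precisely that standard extension, with the loop convention (a loop adding a single $1$ to the diagonal, preserving symmetry, row sums and walk counts) checked correctly. The only things you might add for completeness are the trivial $k=0$ case in part (1) and the sign-coherence step you already flag, both of which are routine.
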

\begin{proof}
These are classical results for graphs without loops (see e.g. \cite{Br-Ha}), whose proofs straightforwardly extend to the case of graphs with loops.
\end{proof}

\begin{proposition} \label{prop3.7}
\begin{enumerate}
\item[$(1)$] Let $\Gamma$ be a connected $k$-regular graph with exactly one eigenvalue. Then $\Gamma$ consists of a single vertex $x$. If $x$ has no loop, then the eigenvalue equals $k=0$, otherwise the eigenvalue equals $k=1$.
\item[$(2)$] Let $\Gamma$ be a connected $k$-regular graph with exactly two eigenvalues. Then $\Gamma$ is either a complete graph $K_l$ for some $l \geq 2$ having eigenvalues $k=l-1$ and $-1$ or $\widetilde{K_l}$ for some $l \geq 2$ having eigenvalues $k=l$ and $0$.
\end{enumerate}
\end{proposition}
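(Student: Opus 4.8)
The plan is to deduce everything from Proposition~\ref{prop3.6}. For part~(1), I would first invoke Proposition~\ref{prop3.6}(1): since $\Gamma$ is connected and $k$-regular, $k$ is an eigenvalue of $\Gamma$ of multiplicity exactly $1$. If $\Gamma$ has only one distinct eigenvalue, that eigenvalue is therefore $k$ and its multiplicity is $1$; since the multiplicities sum to the number $v$ of vertices, we get $v=1$. Thus $\Gamma$ is a single vertex $x$, and its adjacency matrix is the $1\times 1$ matrix whose entry is $0$ if $x$ carries no loop and $1$ if it does (recall the convention that a loop contributes $1$ to the corresponding diagonal entry of the adjacency matrix). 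The unique eigenvalue, which equals $k$, is then $0$ in the loopless case and $1$ otherwise, as claimed.

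For part~(2), I would again use Proposition~\ref{prop3.6}(1) to identify one of the two eigenvalues with $k$, of multiplicity $1$ and with eigenvector $[1,1,\ldots,1]^t$; let $\theta\neq k$ be the other one. Next I would apply Proposition~\ref{prop3.6}(2): a connected graph with exactly two distinct eigenvalues has diameter less than $2$, hence at most $1$, so every two distinct vertices of $\Gamma$ are adjacent. Equivalently, the adjacency matrix can be written $A=(J-I)+D$, where $D=\mathrm{diag}(d_1,\ldots,d_v)$ with $d_i\in\{0,1\}$ recording whether vertex $i$ has a loop. Under the paper's convention, the degree of vertex $i$ equals $(v-1)+d_i$, so $k$-regularity forces all the $d_i$ to be equal. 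If $D=0$, then $A=J-I$ and $\Gamma=K_v$, with eigenvalues $k=v-1$ of multiplicity $1$ and $-1$ of multiplicity $v-1$. If $D=I$, then $A=J$ and $\Gamma=\widetilde{K_v}$, with eigenvalues $k=v$ of multiplicity $1$ and $0$ of multiplicity $v-1$. In either case, the hypothesis that two distinct eigenvalues occur forces $v\geq 2$, and putting $l:=v$ gives exactly the two possibilities in the statement.

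I do not expect a genuine obstacle: once Proposition~\ref{prop3.6} is available, the argument is bookkeeping on a $1\times 1$ matrix (part~(1)) and on a matrix of the form $(J-I)+D$ (part~(2)). The only point deserving care is that the degree computation $(v-1)+d_i$ in part~(2) uses the convention adopted in Section~\ref{sec1} that a loop contributes $1$ rather than $2$ to the degree; with the classical convention one would instead get $(v-1)+2d_i$, which still forces the $d_i$ to coincide but would alter the numerical relation between $k$ and $v$ in the looped case.
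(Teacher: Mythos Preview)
Your proof is correct and follows essentially the same approach as the paper, relying on Proposition~\ref{prop3.6} and then regularity to force all-or-no loops in part~(2). The only minor difference is in part~(1): the paper invokes Proposition~\ref{prop3.6}(2) to conclude the diameter is $0$ and hence $\Gamma$ is a single vertex, whereas you use Proposition~\ref{prop3.6}(1) to argue that the unique eigenvalue $k$ has multiplicity $1$ so $v=1$; both arguments are immediate and equivalent in spirit.
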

\begin{proof}
(1) By Proposition \ref{prop3.6}(2), the diameter of $\Gamma$ is equal to $0$, from which the claim follows. 

(2) In this case, the diameter is at most and hence equal to $1$. So, $\Gamma$ can be obtained by adding a number of loops to a complete graph $K_l$ with $l \geq 2$. As the graph is regular, there are either no loops, or every vertex has a loop.
\end{proof}

\begin{corollary} \label{co3.8}
Let $\Gamma$ be a regular graph with exactly one eigenvalue. Then either $\Gamma$ is a graph without any edges or loops, or $\Gamma$ is a graph with a loop in every vertex, but with no additional edges.
\end{corollary}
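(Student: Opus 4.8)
The plan is to reduce the statement to the connected case already settled in Proposition \ref{prop3.7}(1) by passing to the connected components of $\Gamma$. Suppose $\Gamma$ is $k$-regular. First I would note that every connected component of $\Gamma$ is again $k$-regular \emph{for the same value of $k$}, since the degree of a vertex is determined within its own component. Next, the adjacency matrix of $\Gamma$ is block-diagonal with the adjacency matrices of the components as its diagonal blocks, so the multiset of eigenvalues of $\Gamma$ is the union of the multisets of eigenvalues of its components. Consequently each component has at least one eigenvalue, and because $\Gamma$ has exactly one distinct eigenvalue, say $\mu$, each component has exactly one distinct eigenvalue and that eigenvalue is $\mu$ for all of them simultaneously.

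Then I would apply Proposition \ref{prop3.7}(1) to each component: a connected $k$-regular graph with exactly one eigenvalue is a single vertex, which either carries no loop — in which case its eigenvalue is $k=0$ — or carries a loop — in which case its eigenvalue is $k=1$. In particular $k \in \{0,1\}$. Since all components of $\Gamma$ share the same eigenvalue $\mu = k$, they are either all loopless isolated vertices (when $\mu = k = 0$) or all isolated vertices each carrying a loop (when $\mu = k = 1$). In the former case $\Gamma$ has no edges and no loops; in the latter case $\Gamma$ has a loop at every vertex and no additional edges. This is exactly the claimed dichotomy.

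There is no genuinely hard step here; the only points that need a moment's care are the observation that regularity of $\Gamma$ forces all components to be $k$-regular with one and the same $k$, and the fact that the single eigenvalue is common to all components, which is what prevents mixing loopless isolated vertices with looped ones. Both follow at once from the block-diagonal structure of the adjacency matrix together with the convention of Section \ref{sec1} that a loop contributes $1$ to the degree of its vertex. The degenerate case in which $\Gamma$ has no vertices is vacuous and may simply be ignored.
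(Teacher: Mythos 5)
Your proof is correct and follows essentially the same route as the paper: pass to connected components, note they all share the same unique eigenvalue, and apply Proposition \ref{prop3.7}(1) to each. You merely spell out the block-diagonal justification that the paper leaves implicit.
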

\begin{proof}
All connected components then have the same unique eigenvalue. The claim then follows from Proposition \ref{prop3.7}(1).
\end{proof}

\begin{corollary} \label{co3.9}
Let $\Gamma$ be a regular graph with exactly two eigenvalues. Then one of the following cases occurs.
\begin{itemize}
\item[$(a)$] $\Gamma$ is a graph without proper edges, in which some vertices have loops, and others not. In this case, the eigenvalues are $0$ and $1$.
\item[$(b)$] $\Gamma$ is the disjoint union of a number of isomorphic copies of the same $\widetilde{K_l}$, $l \geq 2$, and some separate vertices without loops, with both being present. In this case, the eigenvalues are $0$ and $l$.
\item[$(c)$] $\Gamma$ is the disjoint union of a number of isomorphic copies of $K_2$ and some separate vertices with loops, with both being present. In this case, the eigenvalues are $-1$ and $1$. 
\item[$(d)$] $\Gamma$ is the disjoint union of a number of isomorphic copies of the same $K_l$, $l \geq 2$. In this case, the eigenvalues are $-1$ and $l-1$. 
\item[$(e)$] $\Gamma$ is the disjoint union of a number of isomorphic copies of the same $\widetilde{K_l}$, $l \geq 2$. In this case, the eigenvalues are $0$ and $l$.
\end{itemize}
\end{corollary}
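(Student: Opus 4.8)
The plan is to reduce everything to the connected case handled in Proposition \ref{prop3.7}. Decompose $\Gamma$ into its connected components $\Gamma_1,\dots,\Gamma_r$. Since $\Gamma$ is regular, each $\Gamma_i$ is a connected regular graph (with its own valency), and the set of eigenvalues of $\Gamma$ is the union of the sets of eigenvalues of the $\Gamma_i$; in particular each $\Gamma_i$ has one or two distinct eigenvalues. By Proposition \ref{prop3.7}(1) a one-eigenvalue component is $K_1$ (eigenvalue $0$) or $\widetilde{K_1}$ (eigenvalue $1$), and by Proposition \ref{prop3.7}(2) a two-eigenvalue component is $K_l$ with $l\ge 2$ (eigenvalues $l-1$ and $-1$) or $\widetilde{K_l}$ with $l\ge 2$ (eigenvalues $l$ and $0$). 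Thus every $\Gamma_i$ occurs in the list $K_1,\widetilde{K_1},K_2,\widetilde{K_2},K_3,\dots$, and it remains only to determine which collections of such graphs have exactly two distinct eigenvalues in total.

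I would organise the rest of the proof according to which kinds of component occur, keeping track of the resulting eigenvalue pair $\{\alpha,\beta\}$ with $\alpha>\beta$. If no two-eigenvalue component occurs, every $\Gamma_i$ is $K_1$ or $\widetilde{K_1}$, and to get two distinct eigenvalues both kinds must be present, giving eigenvalues $0$ and $1$: this is case (a). If some component equals $K_l$ with $l\ge 2$, then $\{-1,l-1\}$ is contained in, hence equals, the spectrum; this excludes every $\widetilde{K_{l'}}$ with $l'\ge 2$ (which would introduce the eigenvalue $0$), every $K_{l'}$ with $l'\ge 2$ and $l'\ne l$, and $K_1$ as well, while a $\widetilde{K_1}$ is possible only if $l-1=1$. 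Hence either $l\ge 3$ and all components are copies of $K_l$, which is case (d), or $l=2$ and the components are copies of $K_2$ together with possibly some copies of $\widetilde{K_1}$: case (c) if at least one $\widetilde{K_1}$ occurs, and case (d) with $l=2$ otherwise. If no component is a $K_l$ with $l\ge 2$ but some component is $\widetilde{K_l}$ with $l\ge 2$, then the spectrum is $\{0,l\}$, which forbids any $\widetilde{K_1}$ (eigenvalue $1$) and any $\widetilde{K_{l'}}$ with $l'\ge 2$ and $l'\ne l$, but permits copies of $K_1$ (eigenvalue $0$). So the components are copies of $\widetilde{K_l}$ together with possibly some copies of $K_1$: case (b) if at least one $K_1$ occurs, and case (e) otherwise. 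Since a $K_l$ and a $\widetilde{K_{l'}}$ with $l,l'\ge 2$ can never coexist, these three situations are exhaustive, and in each of them the two eigenvalues are exactly those recorded in the statement.

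There is no genuinely hard step; once Proposition \ref{prop3.7} is in hand this is finite bookkeeping. The two points that need attention are: (i)~``regular'' must be read component-wise, since the components may carry different valencies --- this is precisely why cases (a) and (b), which are not $k$-regular for a single value of $k$, occur; and (ii)~the boundary behaviour at small $l$, namely that the eigenvalue $1$ of $\widetilde{K_1}$ coincides with the eigenvalue $l-1$ of $K_l$ only for $l=2$ (so copies of $\widetilde{K_1}$ may be combined with copies of $K_2$ but not with copies of $K_l$ for $l\ge 3$), and dually that the eigenvalue $0$ of $K_1$ agrees with an eigenvalue of every $\widetilde{K_l}$.
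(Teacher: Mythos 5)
Your proof is correct and follows essentially the same route as the paper: decompose $\Gamma$ into connected components, note each component has at most two eigenvalues, classify the components via Proposition \ref{prop3.7} (the paper phrases this through the diameter bound of Proposition \ref{prop3.6}(2)), and then check which combinations of components yield exactly two eigenvalues in total. Your eigenvalue bookkeeping is merely a more explicit version of the paper's brief case analysis, and your remark that ``regular'' must be read component-wise in cases $(a)$--$(c)$ is a fair observation about the statement itself.
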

\begin{proof}
By Proposition \ref{prop3.6}(2), all connected components of $\Gamma$ have diameter 0 or 1. If all of them have diameter 0, then we must have case (a). If there is a connected component of diameter 0 consisting of a vertex without loop and a connected component of diameter 1, then we must  have case (b). If there is a connected component of diameter 0 consisting of a vertex with loop and a connected component of diameter 1, then we must  have case (c). If all connected components have diameter 1, then either case (d) or case (e) occurs.
\end{proof}

\medskip \noindent The following is a consequence of Proposition \ref{prop3.3} and Corollaries \ref{co3.8} and \ref{co3.9}.

\begin{corollary} \label{co3.10}
If $\Gamma$ is a proper LDDG with at most two eigenvalues, then $\Gamma$ is either the disjoint union of at least two isomorphic copies of the same $K_l$ with $l \geq 3$, or the disjoint union of at least two isomorphic copies of the same $\widetilde{K_l}$ with $l \geq 2$.
\end{corollary}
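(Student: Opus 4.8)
The plan is a short case analysis driven by Proposition \ref{prop3.3} (which gives $k \ge 2$) together with the classification of graphs with few eigenvalues in Corollaries \ref{co3.8} and \ref{co3.9}.

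First I would eliminate the possibility of a single eigenvalue. By Corollary \ref{co3.8}, a regular graph with one eigenvalue is either edgeless and loopless ($0$-regular) or has a loop at every vertex and no further edges ($1$-regular). In both cases $k \le 1$, which is incompatible with $k \ge 2$. So a proper LDDG has at least two eigenvalues, and since it is assumed to have at most two, it has exactly two.

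Next I would invoke Corollary \ref{co3.9} and discard cases (a)--(c). In each of these a vertex of degree $0$ or $1$ occurs (an isolated vertex, a vertex whose only incident edge is a loop, or a vertex in a $K_2$ component), so $\Gamma$ is either not regular or is $k$-regular with $k \le 1$; both contradict the fact that a proper LDDG is $k$-regular with $k \ge 2$. Hence case (d) or case (e) holds: $\Gamma$ is a disjoint union of some number $t \ge 1$ of copies of the same $K_l$, respectively of the same $\widetilde{K_l}$, with $l \ge 2$.

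Finally I would extract the precise parameters from properness. In case (d), $\Gamma$ is $(l-1)$-regular, two vertices in the same component have $l-2$ common neighbours and two vertices in distinct components have none; so $\lambda_1 \ne \lambda_2$ forces $l \ge 3$, and a single $K_l$ (being a graph in which every pair of vertices has $l-2$ common neighbours) cannot be proper, so $t \ge 2$. In case (e), $\Gamma$ is $l$-regular and, since every vertex of a $\widetilde{K_l}$-component is adjacent to all vertices of that component including itself, two vertices of a common component have $l$ common neighbours and two vertices of distinct components have none; here $l \ge 2$ is automatic, but again a single $\widetilde{K_l}$ has $\lambda_1 = \lambda_2$, so $t \ge 2$. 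In both surviving cases the canonical partition is the partition into the $t$ components, whence $m = t \ge 2$ and $n = l \ge 2$, matching exactly the two alternatives in the statement. There is no real obstacle; the only point needing a little attention is realising that ``at least two'' copies is forced, because a single copy of $K_l$ or of $\widetilde{K_l}$ has all pairs of vertices sharing equally many common neighbours and hence is not proper.
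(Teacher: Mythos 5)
Your proposal is correct and follows essentially the same route as the paper, which derives Corollary \ref{co3.10} directly from Proposition \ref{prop3.3} together with Corollaries \ref{co3.8} and \ref{co3.9}; you have simply spelled out the case elimination (ruling out one eigenvalue and cases (a)--(c), then using $\lambda_1 \neq \lambda_2$ to force $l \ge 3$ in the $K_l$ case and at least two components in both cases) that the paper leaves implicit. No gaps.
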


\medskip \noindent A {\em strongly regular graph} $\Gamma$ with parameters $(v,k,\lambda,\mu)$ is defined as a $k$-regular graph of diameter 2 on $v$ vertices without loops such that every two adjacent vertices have exactly $\lambda$ common neighbours and any two distinct non-adjacent vertices have exactly $\mu$ common neighbours. If we add loops to every vertex of $\Gamma$, then we get a {\em modified strongly regular graph}. This graph $\widetilde{\Gamma}$ is then regular of degree $k+1$, any two distinct adjacent vertices have exactly $\lambda+2$ common neighbours and any two distinct nonadjacent vertices still have precisely $\mu$ common neighbours.  

\begin{proposition} \label{prop3.11}
Let $\Gamma$ be a connected $k$-regular graph with exactly three eigenvalues. Then $\Gamma$ is either a strongly regular graph or a modified strongly regular graph.
\end{proposition}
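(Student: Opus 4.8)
The plan is to locate the diameter, to extract a quadratic relation satisfied by the adjacency matrix $A$ of $\Gamma$, and then to split into cases according to the diagonal of $A$. By Proposition~\ref{prop3.6}(2) the diameter of $\Gamma$ is at most $2$, and it cannot be $0$ or $1$ (otherwise $\Gamma$ is a single vertex, a complete graph $K_{v}$, or a $\widetilde{K_{v}}$, and has at most two eigenvalues, cf.\ Proposition~\ref{prop3.7}); hence it is exactly $2$. Write the three eigenvalues as $k>r>s$. Since $A$ is symmetric with exactly these eigenvalues and the eigenvalue $k$ is simple with eigenvector $\mathbf 1=[1,\dots,1]^{t}$ (Proposition~\ref{prop3.6}(1)), the symmetric matrix $(A-rI)(A-sI)$ vanishes on $\mathbf 1^{\perp}$ and multiplies $\mathbf 1$ by $(k-r)(k-s)$, whence
\[ A^{2}-(r+s)A+rsI=(A-rI)(A-sI)=cJ,\qquad c:=\frac{(k-r)(k-s)}{v}>0, \]
with $J$ the all-ones matrix. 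Because every $A_{xy}\in\{0,1\}$ and $A=A^{t}$, we have $(A^{2})_{xx}=\sum_{y}A_{xy}^{2}=\sum_{y}A_{xy}=k$, so that
\[ k-(r+s)A_{xx}+rs=c\qquad\text{for every vertex }x. \]

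The straightforward part is the case in which the diagonal of $A$ is constant. If $A_{xx}=0$ for all $x$, then comparing off-diagonal entries in $A^{2}-(r+s)A+rsI=cJ$ yields $(A^{2})_{xy}=c+r+s$ for adjacent $x\neq y$ and $(A^{2})_{xy}=c$ for non-adjacent $x,y$; since $\Gamma$ is loopless these $(A^{2})_{xy}$ count common neighbours, so $\Gamma$ is a strongly regular graph with parameters $(v,k,c+r+s,c)$ (a genuine one, being connected of diameter $2$). If $A_{xx}=1$ for all $x$, then $A-I$ is the adjacency matrix of the loopless graph $\Gamma'$ obtained by deleting every loop, which is connected, $(k-1)$-regular, and has eigenvalues $k-1>r-1>s-1$; by the previous case $\Gamma'$ is strongly regular, so $\Gamma=\widetilde{\Gamma'}$ is a modified strongly regular graph.

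The case in which $A_{xx}$ is not constant is, I expect, the real obstacle. Subtracting the diagonal relation for a vertex without a loop from that for a vertex with a loop forces $r+s=0$; then $s=-r$, $c=k-r^{2}$, and the identity becomes $A^{2}=AA^{t}=cJ+(k-c)I$, while multiplying by $\mathbf 1$ gives $k+(v-1)c=k^{2}$. This is exactly the statement that $A$ is the incidence matrix of a symmetric $2$-$(v,k,c)$ design, and, being symmetric, $A$ realises a polarity of that design whose set of absolute points equals the set of looped vertices --- which in this case is neither empty nor everything. One would like to derive a contradiction, but none exists: for example the Erd\H{o}s--R\'enyi polarity graph of $\PG(2,q)$ --- two possibly equal points being adjacent when one lies on the polar line of the other --- is connected and $(q+1)$-regular, has exactly the three eigenvalues $q+1$, $\sqrt{q}$, $-\sqrt{q}$, and carries a loop at each of its $q+1$ absolute points and at none of the remaining $q^{2}$ points, so it is neither strongly regular nor modified strongly regular. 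Thus the statement as phrased requires the additional hypothesis that the diagonal of $A$ be constant (equivalently, that all or no vertices carry a loop), under which the two cases above are exhaustive; without it, the correct conclusion is that $\Gamma$ is strongly regular, modified strongly regular, or a symmetric incidence graph of a symmetric $2$-design, and the remaining work is to analyse this last family.
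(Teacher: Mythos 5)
Your proposal is essentially the paper's own argument in its first two cases, and it is correct there: both you and the paper use Proposition \ref{prop3.6}(1) to turn the degree-two polynomial $(A-rI)(A-sI)$ into a positive multiple of $J$, and both read off a strongly regular graph when no vertex has a loop and a modified strongly regular graph when every vertex has one. The important part of your write-up is the third case, and there you have in fact caught a genuine error in the paper. In the paper's proof the identity $A^2=\alpha_1 I+\alpha_2 A+\alpha_3(J-I-A)$ is compared entrywise, but the diagonal entry of $J-I-A$ at a looped vertex equals $-1$, so a looped vertex is adjacent to $\alpha_1+\alpha_2-\alpha_3$ vertices, not $\alpha_1+\alpha_2$ as claimed; hence the coexistence of looped and unlooped vertices only forces $\alpha_2=\alpha_3$ (your condition $r+s=0$), which is not excluded by the paper's case hypothesis $\alpha_2\neq 0$. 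Your counterexample sits exactly in this overlooked case: the polarity graph of $\PG(2,q)$ (loops at the absolute points) is connected, $(q+1)$-regular under the paper's degree convention, satisfies $A^2=qI+J$, and has spectrum $\{q+1,\sqrt{q},-\sqrt{q}\}$ with both square roots occurring (the trace equals the number $q+1$ of absolute points), yet it has loops at some but not all vertices, so with the paper's definitions it is neither strongly regular nor modified strongly regular. So Proposition \ref{prop3.11} as stated is false, and your proposed repair is the right one: either add the hypothesis that the diagonal of $A$ is constant, or add as a third alternative the graph of a polarity of a symmetric $2$-$(v,k,c)$ design, with loops at the absolute points.

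It is worth recording the downstream effect, which you partially anticipate. Proposition \ref{prop3.12} is unharmed, because in the extra case every two distinct vertices have the same number $c>0$ of common neighbours, contradicting $\lambda_1\neq\lambda_2$ for a proper LDDG. But the extra case does reappear wherever the improper situation is allowed: the disjoint union of two copies of the $\PG(2,q)$ polarity graph is a disconnected proper LDDG with three eigenvalues whose components are of neither type listed in Proposition \ref{prop3.13}(2), and the polarity graph itself is an improper LDDG with loops that contradicts the final conclusion of Remark \ref{Rem3.14}. So your analysis is not merely a completion of the proof; the third family you identify must be incorporated into the statements of Proposition \ref{prop3.11} and its consequences.
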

\begin{proof}
Denote by $k$, $l_1$ and $l_2$ the three eigenvalues of $\Gamma$, and $A$ the adjacency matrix (recall Proposition \ref{prop3.6}(2)). Put $B:= (A - l_1 I) (A - l_2 I)$. Then $B \not= O$ as the minimal polynomial of $A$ has degree 3. As $(A - k I) B = O$, we have $AB = kB$, and so every column of $B$ is a multiple of $[1 \, 1 \, \ldots \,1]^t$ by Proposition \ref{prop3.6}(1). As $B$ is symmetric, it is a nonzero multiple of $J$. This implies that we can write $A^2$ as a linear combination of $I$, $A$ and $J$, i.e. 
\[ A^2 = \alpha_1 I + \alpha_2 A + \alpha_3(J-I-A)  \]
for certain real numbers $\alpha_1$, $\alpha_2$ and $\alpha_3$. Comparing the entries at both sides of the inequality, we find:

$\bullet$ any two distinct nonadjacent vertices have exactly $\alpha_3$ common neighbours;

$\bullet$ any two distinct adjacent vertices have exactly $\alpha_2$ common neighbours; 

$\bullet$ every vertex without loop is adjacent to $\alpha_1$ vertices;

$\bullet$ every vertex with loop is adjacent to $\alpha_1 + \alpha_2$ vertices. 

\noindent Suppose $\alpha_2 = 0$. Then there cannot be loops in $\Gamma$ as for any edge containing a vertex with a loop, this vertex is adjacent to the two end vertices of the edge. In this case, $\Gamma$ is a strongly regular graph with parameters $(v,k,\lambda,\mu)=(v,\alpha_1,0,\alpha_3)$.

Suppose $\alpha_2 \not= 0$. Then it follows from the above conditions that either none or all vertices of $\Gamma$ have loops, resulting in either a strongly regular graph with parameters $(v,k,\lambda,\mu)=(v,\alpha_1,\alpha_2,\alpha_3)$ or a modified strongly regular graph for which the underlying strongly regular graph has parameters  $(v,k,\lambda,\mu)=(v,\alpha_1+\alpha_2-1,\alpha_2-2,\alpha_3)$.

Note that in both cases, the diameter has to be at least 2, as otherwise the graph would be isomorphic to either $K_v$ or $\widetilde{K_v}$ by Proposition \ref{prop3.7}, contrary to the fact that there are exactly three eigenvalues. Note also that in all cases, the numbers $\alpha_1$, $\alpha_2$ and $\alpha_3$ are nonnegative integers. 
\end{proof}

\begin{proposition} \label{prop3.12}
Suppose $\Gamma$ is a connected proper LDDG with exactly three eigenvalues. Then $\Gamma$ is isomorphic to either a complete multipartite graph of type $K_{n,n,\ldots,n}$ or of type $\widetilde{K_{n,n,\ldots,n}}$.
\end{proposition}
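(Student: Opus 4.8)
The plan is to exploit the identity obtained inside the proof of Proposition~\ref{prop3.11}: writing $A$ for the adjacency matrix of $\Gamma$, we have
\[ A^2 = \alpha_1 I + \alpha_2 A + \alpha_3 (J - I - A) \]
for suitable nonnegative integers $\alpha_1,\alpha_2,\alpha_3$. Comparing $(u,v)$-entries for distinct vertices $u\not= v$ shows that $u$ and $v$ have exactly $\alpha_2$ common neighbours when $u\sim v$ and exactly $\alpha_3$ common neighbours when $u\not\sim v$. The first step is to note that $\alpha_2\not=\alpha_3$: otherwise all pairs of distinct vertices would have the same number of common neighbours, forcing $\lambda_1=\lambda_2$ and contradicting that $\Gamma$ is proper.

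Let $\{X_1,\dots,X_m\}$ be the canonical partition of $\Gamma$. Because every two distinct vertices of a fixed class $X_i$ have the same number $\lambda_1$ of common neighbours and $\alpha_2\not=\alpha_3$, either all pairs of distinct vertices inside $X_i$ are adjacent or all of them are non-adjacent; and since $\lambda_1$ does not depend on $i$, the same alternative holds for every class, so (in the loopless sense) either every class induces a clique or every class induces a coclique. Likewise, any two vertices lying in different classes have $\lambda_2$ common neighbours, and $\lambda_2$ is a single value, so either every pair of vertices from distinct classes is adjacent, or no such pair is adjacent.

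This leaves four combinations, three of which I would discard. If the classes induce cliques and all cross pairs are adjacent, then every two distinct vertices of $\Gamma$ are adjacent; as $\Gamma$ is regular, $\Gamma\cong K_v$ or $\Gamma\cong\widetilde{K_v}$, which has only two eigenvalues, a contradiction. If the classes induce cliques but no cross pair is adjacent, then $\Gamma$ has no edge joining two distinct classes, so $\Gamma$ is disconnected (recall $m\ge 2$), a contradiction. If the classes induce cocliques and no cross pair is adjacent, then $\Gamma$ has no proper edges at all, hence $k\le 1$, contradicting Proposition~\ref{prop3.3}. Therefore the remaining case must hold: each class $X_i$ induces a coclique and every two vertices in distinct classes are adjacent. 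Consequently the loopless graph obtained from $\Gamma$ by deleting all loops is exactly the complete multipartite graph on the parts $X_1,\dots,X_m$, each of size $n$, i.e.\ $K_{n,n,\dots,n}$, and $\Gamma$ is obtained from it by putting a loop on the vertices of some subset $S\subseteq V$. Since $\Gamma$ is regular while $K_{n,\dots,n}$ is $(m-1)n$-regular, this forces $S=\emptyset$ or $S=V$; that is, $\Gamma\cong K_{n,n,\dots,n}$ or $\Gamma\cong\widetilde{K_{n,n,\dots,n}}$, as claimed.

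There is no genuine difficulty here: the proof is bookkeeping once one has the dichotomy coming from $\alpha_2\ne\alpha_3$. The single point demanding care is that the ``clique versus coclique'' choice for the classes, and the ``adjacent versus non-adjacent'' choice between classes, are each forced to be globally uniform, which is precisely where the single-valuedness of $\lambda_1$ and of $\lambda_2$ (together with $\alpha_2\ne\alpha_3$) is used.
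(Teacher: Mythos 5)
Your proof is correct and follows essentially the same route as the paper: both arguments rest on the identity $A^2=\alpha_1 I+\alpha_2 A+\alpha_3(J-I-A)$ coming from Proposition~\ref{prop3.11}, which makes the common-neighbour count of two distinct vertices depend only on adjacency, and then combine $\lambda_1\not=\lambda_2$ with connectedness to force the complete multipartite structure. You merely carry out the case elimination (ruling out the all-adjacent and edgeless alternatives) and the all-or-no-loops conclusion more explicitly than the paper, which obtains the latter directly from the strongly regular / modified strongly regular dichotomy of Proposition~\ref{prop3.11}.
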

\begin{proof}
By Proposition \ref{prop3.11}, there exists a strongly regular graph $G$ such that $\Gamma$ is either $G$ or $\widetilde{G}$. Every two distinct adjacent vertices of $\Gamma$ have a constant number of common neighbours, as well as any two distinct nonadjacent vertices of $\Gamma$. So, the equivalence relation defining the classes of the LDDG are defined by either the adjacency relation or by the non-adjacency relation of $G$. As $G$ is connected, it must be the non-adjacency relation that is involved, and it follows that $G$ is a multipartite graph (of the mentioned type). 
\end{proof}

\begin{proposition} \label{prop3.13}
Let $\Gamma$ be a disconnected proper LDDG with exactly three eigenvalues. Then one of the following cases occurs for $\Gamma$:
\begin{enumerate}
\item[$(1)$] $\Gamma$ is the disjoint union of a nonzero number of $K_{n,n}$'s and a number of $\widetilde{K_n}$'s, for a certain integer $n \geq 2$;  
\item[$(2)$] there exist parameters $k$ and $l$, such that $\Gamma$ is the disjoint union of at least two graphs of one of the following types:
\begin{itemize}
\item a strongly regular graph with valency $k$ and with both the $\lambda$- and $\mu$-parameters equal to $l$; 
\item a modified strongly regular graph for which the underlying strongly regular graph has valency $k-1$, $\lambda$-parameter equal to $l-2$ and $\mu$-parameter equal to $l$.
\end{itemize}
\end{enumerate}
\end{proposition}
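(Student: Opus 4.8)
The plan is to reduce everything to the connected components of $\Gamma$ and then feed them into the classification results already proved. Since $\Gamma$ is $k$-regular and disconnected, each component is connected and $k$-regular and has spectrum contained in the three-element spectrum of $\Gamma$, so by Propositions \ref{prop3.3}, \ref{prop3.7} and \ref{prop3.11} each component is one of $K_l$ ($l\geq 3$), $\widetilde{K_l}$ ($l\geq 2$), a strongly regular graph, or a modified strongly regular graph. The key first step is a refinement claim: the canonical partition of $\Gamma$ refines the partition into components, i.e.\ every component is a union of classes. To see this, suppose some class $X$ met two distinct components. Then two vertices of $X$ lying in different components have no common neighbour, forcing $\lambda_1=0$ and hence, by properness, $\lambda_2\geq 1$. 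But if $\lambda_1=0$ no component can lie inside a single class, since in each of the four component types there are two distinct vertices with a common neighbour (any two vertices for $K_l$ and $\widetilde{K_l}$; any two non-adjacent vertices for the strongly regular and modified strongly regular cases, because there $\mu\geq 1$). Hence the second component met by $X$ contains a vertex $y\notin X$; together with a vertex $x\in X$ of the first component, the pair $x,y$ lies in two different classes and two different components, so $\lambda_2=0$, a contradiction.

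By the refinement claim, two vertices from distinct components lie in distinct classes, so $\lambda_2$ is the number of common neighbours of such a pair, namely $0$; thus $\lambda_2=0$ and $\lambda_1\geq 1$. I would now split on whether some component contains at least two classes. If a component $C_0$ does, then $C_0$ together with the classes of $\Gamma$ it contains is itself a connected proper LDDG with parameters $\lambda_1,\lambda_2,n$; being connected it has more than two eigenvalues by Corollary \ref{co3.10}, hence exactly three, and Proposition \ref{prop3.12} gives $C_0\cong K_{n,n,\ldots,n}$ or $C_0\cong\widetilde{K_{n,n,\ldots,n}}$. In the second case two vertices of $C_0$ from different parts have at least two common neighbours, contradicting $\lambda_2=0$; so $C_0\cong K_{n,n,\ldots,n}$, which forces $\lambda_1=k$. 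Proposition \ref{prop3.4} then says $\Gamma$ is either obtained from a symmetric design --- which yields a connected graph, since any two of its classes are joined through a common block of the design and so its diameter is at most $2$, contradicting disconnectedness --- or is a disjoint union of copies of $\widetilde{K_n}$ and $K_{n,n}$; as the spectrum of such a union is $\{n,0\}$ when no $K_{n,n}$ occurs, having three eigenvalues forces at least one $K_{n,n}$, and this is case~$(1)$.

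In the complementary situation every component is a single class, all components have $n$ vertices, and $\Gamma$ has $m$ components. Inside a component $C_i$ every two distinct vertices have $\lambda_1$ common neighbours and each vertex has exactly $k$ closed walks of length two, so the adjacency matrix satisfies $A_i^2=\lambda_1(J-I)+kI$ and the only eigenvalues $C_i$ can have are $k$ and $\pm\sqrt{k-\lambda_1}$. The possibility $\lambda_1=k$ is excluded: it makes every component a copy of $\widetilde{K_n}$ and $\Gamma$ would have the two eigenvalues $n,0$ only. Hence $\lambda_1<k$, each $C_i$ has exactly three eigenvalues and, by Proposition \ref{prop3.11}, is strongly regular or modified strongly regular. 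If $C_i$ is strongly regular, the fact that all pairs of distinct vertices have the same number $\lambda_1$ of common neighbours forces its $\lambda$- and $\mu$-parameters both to equal $l:=\lambda_1$; if $C_i=\widetilde G$ is modified strongly regular, comparing the common-neighbour counts of adjacent and of non-adjacent vertices in $\widetilde G$ shows that $G$ has valency $k-1$, $\lambda$-parameter $l-2$ and $\mu$-parameter $l$. Since all $C_i$ share the same $k$ and $l$ and there are at least two of them, this is case~$(2)$.

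I expect the main obstacle to be the last case: the delicate point is squeezing the single-class components through Proposition \ref{prop3.11} and being certain that each really is strongly regular or modified strongly regular (with a loop at \emph{every} vertex and the exact parameter shift $l\mapsto l-2$ in the modified case), and that the three-eigenvalue restriction is simultaneously consistent for all components so as to produce one common pair $(k,l)$ --- in particular one must deal carefully with components having only two eigenvalues. A secondary technical point is the refinement claim itself, ruling out a class that straddles two components, since this is exactly what delivers $\lambda_2=0$ and makes both cases manageable.
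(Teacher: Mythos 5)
Your reduction to components, the refinement claim, and the case of a component containing at least two classes (via Corollary \ref{co3.10}, Proposition \ref{prop3.12} and Proposition \ref{prop3.4}) are sound, and indeed a somewhat different route from the paper, which instead derives $\lambda_2=0$ directly and absorbs that situation into a $K_{k,k}$-analysis. The genuine gap is in your last case, exactly where you flag the difficulty: the sentence ``Hence $\lambda_1<k$, each $C_i$ has exactly three eigenvalues'' is unjustified. From $A_i^2=(k-\lambda_1)I+\lambda_1J$ you only get that the spectrum of $C_i$ is contained in $\{k,\pm\sqrt{k-\lambda_1}\}$ (discarding $-k$ is harmless, since the $k^2$-eigenspace of $A_i^2$ is spanned by the all-ones vector), but a component may carry only two of these values: by Proposition \ref{prop3.7}(2) such a component is $K_{k+1}$ or $\widetilde{K_k}$; the latter forces $\lambda_1=k$, which you excluded, but $K_{k+1}$ is perfectly consistent with being a single class in which every two distinct vertices have $\lambda_1=k-1<k$ common neighbours, its eigenvalues $k,-1$ fitting $\pm\sqrt{k-\lambda_1}=\pm1$. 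Ruling this out is where the paper's proof does most of its work: it proves $k-\mu\geq2$ in its cases (b) and (c), so that $\pm\sqrt{k-\mu}\notin\{-1,0\}$ and no two-eigenvalue component can coexist with a three-eigenvalue one. You omit this entirely, so you have not shown that all components are (modified) strongly regular with one common pair $(k,l)$.

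Moreover, this is not a routine verification that can simply be inserted: the step actually fails. Take $G=K_{t\times2}$, the cocktail-party graph, an SRG with valency $2t-2$, $\lambda=2t-4=\mu-2$, $\mu=2t-2$; then $\widetilde{G}$ is a modified SRG with $k=2t-1$, $k-\mu=1$, spectrum $\{2t-1,1,-1\}$, and every two distinct vertices of $\widetilde{G}$ have $2t-2$ common neighbours. Hence $K_{2t}\sqcup\widetilde{G}$ (for $t=2$ this is $K_4\sqcup\widetilde{C_4}$) is a disconnected proper LDDG with parameters $(4t,2t-1,2t-2,0,2,2t)$ and exactly three eigenvalues $2t-1,1,-1$, yet the component $K_{2t}$ is neither a diameter-two SRG nor a modified SRG, so the desired conclusion is violated. (This also exposes a slip in the paper's own case (c): there the valency of $G$ is $k-1$, not $k$, so with $k-\mu=1$ the number of vertices at distance two is $(k-1)(k-\mu)/\mu=1$ rather than $2k/(k-1)$, and the claimed bound $k-\mu\geq2$ is false.) So the gap in your argument cannot be closed as the statement stands; any correct treatment must confront explicitly the possibility $\lambda_1=k-1$ with complete components.
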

\begin{proof}
Denote by $k$ the valency of $\Gamma$. By Proposition \ref{prop3.3}, we know that $k \geq 2$. If we consider two vertices in distinct connected components along with their common neighbours (of which there are none), we see that either $\lambda_1=0$ and $\lambda_2=0$. Suppose that distinct vertices that have no common neighbours belong to the same class of the canonical partition. Then vertices belonging to distinct components belong to the same class. Applying this observation two times, we then see that also vertices belonging to the same connected component belong to the same class. This is in contradiction with the fact that there are at least two classes. So, $\lambda_1 > 0$ and $\lambda_2=0$.   

Connected components having exactly one eigenvalue cannot exist. Indeed, by Proposition \ref{prop3.7}(1), such a connected component is a vertex (with or without a loop) and so the valency $k$ must be either 0 or 1, a contradiction.

Suppose that all connected component have exactly two eigenvalues. Then by Proposition \ref{prop3.7}(2) and the fact that the valency $k$ is at least $2$, we know that every induced subgraph on each such connected component is either a graph of type $K_{k+1}$ or a graph of type $\widetilde{K_k}$. But then not all classes have the same size. So, this situation cannot occur. 

Suppose $\Gamma$ contains a connected component $C$ with exactly three eigenvalues. By Proposition \ref{prop3.11}, there is a strongly regular graph (having exactly three eigenvalues) such that $\Gamma$ is either $G$ or $\widetilde{G}$. Assume the parameters of $G$ as a strongly regular graph are $(|C|,k',\lambda,\mu)$. As $\mu > 0$, we have $\lambda_1=\mu$. If $\Gamma = G$, then $k'=k$ and if $\Gamma = \widetilde{G}$, then $k' = k-1$. Let us now distinguish between four cases.
\begin{enumerate}
\item[(a)] Suppose $\Gamma = G$ and $\lambda \not= \mu$. Then $\lambda = \lambda_2=0$, The relation of being nonadjacent defines an equivalence relation on the vertex set of $\Gamma$, and as $\lambda=0$, $\Gamma$ must be a complete bipartite graph of type $K_{k,k}$. The eigenvalues are then $k$, $0$ and $-k$.
\item[(b)] Suppose $\Gamma = G$ and $\lambda = \mu$. As the diameter of $G$ is 2, we have $k-\mu-1=k-\lambda-1 > 0$ and hence $k-\mu \geq 2$. Using the known formulas for the eigenvalues of a strongly regular graph in terms of the parameters (see e.g. \cite{Br-Ha}), we find that the eigenvalues of $\Gamma$ are equal to $k$, $\sqrt{k-\mu} > 1$ and $-\sqrt{k-\mu} < -1$.
\item[(c)] Suppose $\Gamma = \widetilde{G}$ and $\lambda = \mu-2$. As the diameter of $G$ is equal to $2$, we find that $(k-1)-\lambda-1 = k - \mu > 0$ and so $k-\mu \geq 1$. We can even prove that $k-\mu \geq 2$. Indeed, suppose that $k-\mu=1$. Then $\mu=k-1$ and $\lambda = k-3$, with $k \geq 3$. The number of vertices in $G$ at distance $2$ from a given vertex $x$ is equal to $k(k-\lambda-1)\mu^{-1} = \frac{2k}{k-1}$. This number is only integral for $k=3$, in which case $\mu=2$ and $\lambda=0$. Now, each of the three vertices of $G$ at distance 2 from $x$ is adjacent to exactly 2 vertices at distance 1 from $x$, and as $k=3$, this implies that these three vertices form a triangle, in contradiction with the fact that $\lambda=0$. So, we indeed have that $k - \mu \geq 2$. Computing the eigenvalues of $G$ with the known formulas, we find that these are equal to $k-1$, $-1+\sqrt{k-\mu}$ and $-1-\sqrt{k-u}$. So, the eigenvalues of $\Gamma = \widetilde{G}$ are again equal to $k$, $\sqrt{k-\mu} > 1$ and $-\sqrt{k-\mu} < -1$.
\item[(d)] Suppose $\Gamma = \widetilde{G}$ and $\lambda \not= \mu-2$. In this case, we have $\lambda_2 = \lambda+2$, in contradiction with the fact that $\lambda_2=0$. So, this case cannot occur. 
\end{enumerate}
We thus see that case (a) can never coexist with case (b) or case (c). 

Suppose case (a) occurs. If there is also a connected component with exactly two eigenvalues, then as the valency is equal to $k$ and all eigenvalues are $k$, $0$ and $-k$, we see by Proposition \ref{prop3.7}(2) that the graph induced on such a connected component is isomorphic to $\widetilde{K_k}$. We have case (1) of the proposition. 

Suppose case (b) and/or case (c) occurs. Then we show that we have case (2) of the proposition. It suffices to show that there are no connected components with exactly two eigenvalues. Suppose to the contrary that there is such a connected component. By Proposition \ref{prop3.7}(2) and the fact that the valency is $k$, it follows that the induced subgraph on this component is either $K_{k+1}$ or $\widetilde{K_k}$. The former one has eigenvalues $k$ and $-1 \notin \{ \sqrt{k-\mu},-\sqrt{k-\mu} \}$ and the latter one has eigenvalue $k$ and  $0 \notin \{ \sqrt{k-\mu},-\sqrt{k-\mu} \}$. The desired contradiction has been reached.
\end{proof}

\begin{remark} \label{Rem3.14}
{\em In an improper LDDG $\Gamma$, every two distinct vertices have a constant number $\lambda$ of neighbours. A similar count as in Proposition \ref{prop3.1} gives $k(k-1)=(v-1)\lambda$, where $k$ is the valency and $v$ is the total number of vertices.

Suppose $\lambda=0$. Then every connected component of $\Gamma$ is either a vertex without loop, a vertex with loop or an edge whose two end vertices do not have loops. The LDDG is then a disjoint union of such graphs and there are at most three eigenvalues, see also Proposition \ref{prop3.7}.

Suppose $\lambda > 0$ and also assume that there are proper edges (otherwise we have the situation again of the previous paragraph). Then $\Gamma$ must be connected with diameter at most 2. If the diameter is 1, then $\Gamma$ is a complete graph $K_l$, $l \geq 3$ or a graph of type $\widetilde{K_l}$, $l \geq 2$. Suppose therefore that the diameter is 2. By Proposition \ref{prop3.6}(2), there are then at least three eigenvalues. If $A$ is an adjacency matrix of $\Gamma$, then $A^2 = k I + \lambda (J-I) = (k-\lambda) I + \lambda J$ by comparing the entries at both sides of the equality. So, $A^2$ has two eigenvalues, namely $k-\lambda$ and $k-\lambda + \lambda v = k^2$, implying that every eigenvalue of $A$ is equal to $k$, $\sqrt{k-\lambda}$, $-\sqrt{k-\lambda}$ or $-k$. If $\Gamma$ has no loops, then it must be a strongly regular graph with equal $\lambda$- and $\mu$-parameters. Suppose therefore that $\Gamma$ has loops. We can then apply the Perron-Frobenius Theorem (\cite[Theorem 2.2.1]{Br-Ha}) to the non-negative matrix $A$. This matrix $A$ is irreducible and primitive, with index equal to $1$. The Perron-Frobenius eigenvalue is equal to $k$ with corresponding Perron eigenvector $[1 \, 1 \, \cdots \, 1]^t$. The Perron-Frobenius Theorem then states that there is only one eigenvalue with absolute value equal to $k$. So, $-k$ cannot be an eigenvalue. Again, there are then exactly three eigenvalues, and Proposition \ref{prop3.13} implies that $\Gamma$ is a modified strongly regular graph for which the $\Gamma$-parameter of the associated strongly regular graph is 2 less that its $\mu$-parameter.}
\end{remark}

\begin{proposition} \label{prop3.15}
Let $\lambda > 0$, $k \geq 2$ and $n \geq 2$ be integers. Let $\mathcal{F}_1$ be the family of all nonisomorphic improper LDDG's with valency $k$ on $n$ vertices for which any two distinct vertices have exactly $\lambda$ common neighbours (see Remark \ref{Rem3.14}). Let $\mathcal{F}_2$ be the family of all nonisomorphic proper LDDG's with valency $k$ and class sizes $n$ for which $\lambda_1=\lambda$ and $\lambda_2=0$. Then any disjoint union of $l_1$ elements of $\mathcal{F}_1$ and $l_2$ elements of $\mathcal{F}_2$ with $l_1+l_2 \geq 2$ and $l_2 \geq 1$ is a disconnected proper LDDG. Conversely, every disconnected proper LDDG can be obtained in this way.
\end{proposition}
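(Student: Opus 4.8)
I would prove the two implications separately; the construction direction is a direct verification, whereas the converse carries the real content.

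\emph{Construction direction.} Let $\Gamma$ be the disjoint union of $G_1,\dots,G_{l_1}\in\mathcal{F}_1$ and $H_1,\dots,H_{l_2}\in\mathcal{F}_2$ with $l_1+l_2\geq 2$ and $l_2\geq 1$. Each $G_i$ has $n$ vertices and is $k$-regular, and each $H_j$ is $k$-regular with all classes of its canonical partition of size $n$, so $\Gamma$ is $k$-regular. I would take the partition of $V(\Gamma)$ whose members are the sets $V(G_i)$ together with the classes of the canonical partitions of the $H_j$; every member has size $n$ and, since $l_2\geq 1$, there are at least two of them. As every neighbour of a vertex lies in its own component, two distinct vertices in a common member have exactly $\lambda$ common neighbours (by the defining property of $\mathcal{F}_1$ if the member is some $V(G_i)$, and because $\lambda_1=\lambda$ in $H_j$ otherwise), while two vertices in distinct members have no common neighbour (being in different components, or in the same $H_j$ but in different classes, where $\lambda_2=0$). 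Hence $\Gamma$ is an LDDG with $\lambda_1=\lambda\neq 0=\lambda_2$ and class size $n\geq 2$; it is proper, it is disconnected since it has at least two components, and by uniqueness of the canonical partition of a proper LDDG the partition described above is the canonical one.

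\emph{Converse direction.} Let $\Gamma$ be a disconnected proper LDDG with parameters $(v,k,\lambda_1,\lambda_2,m,n)$. Arguing as in the proof of Proposition~\ref{prop3.13} — two vertices in distinct components have no common neighbour, and if all pairs with no common neighbour lay in one class then, iterating across components, every vertex would lie in a single class — one gets $\lambda_2=0$ and $\lambda:=\lambda_1>0$; with Proposition~\ref{prop3.3} this yields $\lambda>0$, $k\geq 2$ and $n\geq 2$, the data defining $\mathcal{F}_1$ and $\mathcal{F}_2$. The key structural fact is that each class of the canonical partition lies in a single component: two vertices of a class have $\lambda>0$ common neighbours, and any such common neighbour lies in the component of each of them. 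Hence each component $C$ is a union of $s_C\geq 1$ whole classes, is $k$-regular as an induced subgraph, and contains all common neighbours of its vertices; so if $s_C=1$ then $C$ is an improper LDDG on $n$ vertices in which any two distinct vertices have exactly $\lambda$ common neighbours, i.e.\ $C\in\mathcal{F}_1$, and if $s_C\geq 2$ then the partition of $C$ into those classes presents $C$ as a proper LDDG of valency $k$ with class size $n$, $\lambda_1=\lambda$ and $\lambda_2=0$, i.e.\ $C\in\mathcal{F}_2$.

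\emph{Assembling the decomposition; main obstacle.} It then remains to group the components into the two families respecting $l_1+l_2\geq 2$ and $l_2\geq 1$, and this bookkeeping is the delicate step. If some component has $s_C\geq 2$, then $\Gamma$ is the disjoint union of its $t\geq 2$ components, one of which lies in $\mathcal{F}_2$, and we are done. If every component is a single class, there are $t\geq 2$ of them, all in $\mathcal{F}_1$; when $t\geq 3$ I would fuse two components $C_1,C_2$ into $H:=C_1\sqcup C_2$, which by the classification above lies in $\mathcal{F}_2$ (with canonical partition $\{V(C_1),V(C_2)\}$), so that $\Gamma=H\sqcup C_3\sqcup\cdots\sqcup C_t$ has the required form with $l_1=t-2$, $l_2=1$. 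The genuinely borderline case is $t=2$: there $\Gamma=C_1\sqcup C_2$ is itself a member of $\mathcal{F}_2$ with $m=2$, so the decomposition must record $\Gamma$ as a single $\mathcal{F}_2$-block (reading the statement with $l_1+l_2\geq 2$ and $\mathcal{F}_2$-members allowed to be disconnected). Apart from this boundary phenomenon, the only non-formal ingredients are the two structural claims in the converse, both of which follow quickly from the arguments already used for Propositions~\ref{prop3.1} and~\ref{prop3.13}.
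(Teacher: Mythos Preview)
Your approach is essentially the paper's: a direct verification in the forward direction, and for the converse the argument that $\lambda_2=0$, that each class lies inside a single component, and that each component is therefore in $\mathcal{F}_1$ (one class) or in $\mathcal{F}_2$ (several classes)---the paper's own proof ends right there with ``the claim now follows'' and never revisits the side conditions $l_1+l_2\geq 2$ and $l_2\geq 1$. Your additional bookkeeping (the fusing manoeuvre for $t\geq 3$ and the flag on $t=2$) goes beyond what the paper does and in fact detects a minor imprecision in the stated converse: when $\Gamma$ has exactly two components, each a single class, no decomposition meets both $l_1+l_2\geq 2$ and $l_2\geq 1$, so what you have spotted is a wording issue in the proposition rather than a gap in your own argument.
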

\begin{proof}
It is straightforward to verify that any disjoint union like above is a disconnected $k$-regular proper LDDG with class sizes $n$ and parameters $\lambda_1=\lambda$ and $\lambda_2=0$. Conversely, suppose that $\Gamma$ is a disconnected proper LDDG with class sizes $n$, valency $k$ and parameters $\lambda_1$ and $\lambda_2$. As any two vertices belonging to distinct components have no common neighbours, we have $0 \in \{ \lambda_1,\lambda_2 \}$.

We show that $\lambda_2=0$. Suppose to the contrary that $\lambda_1=0$. Then any two vertices belonging to distinct components belong to the same class of the canonical partition. Applying this two times, we see that also vertices belonging to the same component belong to the same class of the canonical partition. This is in contradiction with the fact that there are at least two classes.

So, $\lambda_2=0$ and $\lambda := \lambda_1 > 0$. Let $\mathcal{F}_1$ and $\mathcal{F}_2$ then be the above families for the parameters $\lambda$, $k$ and $n$. Note that a class of the LDDG cannot contain vertices that belong to different components. So, a connected component is a union of classes. If it coincides with a class, then the induced subgraph must belong to $\mathcal{F}_1$. If it is the union of more than one class, then the induced subgraph must belong to $\mathcal{F}_2$. The claim now follows.    
\end{proof}

\subsection{The spectrum of LDDG's} \label{sec3.2}

In this subsection, we assume that $\Gamma$ is a proper LDDG with parameters $(v,k,\lambda_1,\lambda_2,m,n)$, where $m,n \geq 2$ and $\lambda_1 \not= \lambda_2$. We show that $\Gamma$ has at most five eigenvalues, and show how all eigenvalues and their corresponding multiplicities can be computed. The treatment given here is based on Sections 2 and 3 of \cite{HKM}. All matrices considered here are over the reals.

For a positive integer $a$, let $I_a$ and $J_a$ be the $a \times a$ identity matrix and the $a \times a$ all-ones matrix, respectively. We also define 
\[ K_{(m,n)} := I_m \otimes J_n = \mathrm{diag}(J_n,\ldots,J_n). \] 
For every $i \in \{ 2,\ldots,n \}$, let $\eta_i$ be the column matrix
\[   [ -1, \, \cdots \, ,-1, n-1, -1, \, \cdots \, ,-1 ]^t  \]
of size $n$ in which all entries are equal to $-1$, except for the $i$th one which is equal to $n-1$. Let $o_n$ (respectively, $j_n$) be the row matrix of length $n$ with all entries equal to $0$ (respectively $1$). For every $j \in \{ 1,\ldots,m \}$ and every $i \in \{ 2,\ldots,n \}$, let $\omega_{i,j}$ be the column matrix 
\[  [o_n, \, \cdots \, ,o_n, \eta_i, o_n, \, \cdots \, ,o_n]^t  \]
of size $v=mn$ in which all $m$ row blocks of size $n$ are equal to $o_n$, except for the $j$th one which is equal to $\eta_i$. For every $j \in \{ 2,\ldots,m \}$, let $\omega_j$ be the column matrix
\[ [ -j_n, \, \cdots \, ,-j_n,(n-1)j_n,-j_n, \, \cdots \, ,-j_n ]^t   \]
of size $v=mn$ in which all $m$ blocks of size $n$ are equal to $-j_n$ except for the $j$th one which is equal to $(n-1)j_n$. Let $\omega$ be the column matrix of size $v=mn$ in which all entries are equal to 1. The claims in the following lemma are easily verified.

\begin{lemma} \label{lem3.16}
\begin{enumerate}
\item[$(1)$] The column matrices $\omega$, $\omega_j$ with $j \in \{ 2,\ldots,m \}$ and $\omega_{ij}$, with $i \in \{ 2,\ldots,n \}$ and $j \in \{ 1,2,\ldots,m \}$, are mutually orthogonal and generate $\R^v$.
\item[$(2)$] $\omega$ is an eigenvector of $I_v$, $J_v$ and $K_{(m,n)}$ with respective eigenvalues $1$, $v=mn$ and $n$. 
\item[$(3)$] For every $j \in \{ 2,\ldots,m \}$, $\omega_j$ is an eigenvector of $I_v$, $J_v$ and $K_{(m,n)}$ with respective eigenvalues $1$, $0$ and $n$.
\item[$(4)$] For every $i \in \{ 2,\ldots,n \}$ and every $j \in \{ 1,\ldots,m \}$, $\omega_{ij}$ is an eigenvector of $I_v$, $J_v$ and $K_{(m,n)}$ with respective eigenvalues $1$, $0$ and $0$.
\end{enumerate}
\end{lemma}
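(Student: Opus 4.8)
The plan is to prove~(1) by exhibiting an orthogonal decomposition of $\R^v$ to which the listed vectors are adapted, and then to obtain~(2)--(4) by reading off the action of $I_v$, $J_v$ and $K_{(m,n)}$ on the three resulting subspaces.

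I would first record the block structure coming from the canonical partition: write each $x\in\R^v=\R^{mn}$ as $x=(x^{(1)},\dots,x^{(m)})$ with $x^{(a)}\in\R^n$, and split $\R^v=W_0\oplus W_1\oplus\cdots\oplus W_m$, where $W_0$ consists of the block-constant vectors (each $x^{(a)}$ a scalar multiple of the all-ones column $j_n^{t}$) and, for $1\le a\le m$, $W_a$ consists of the vectors supported on the $a$-th block whose $n$ entries sum to $0$. This is an orthogonal direct sum with $\dim W_0=m$ and $\dim W_a=n-1$, so the dimensions add up to $m+m(n-1)=v$. Now $\omega$ and the $\omega_j$ ($2\le j\le m$) visibly lie in $W_0$, while each $\omega_{ij}$ lies in $W_j$ because $\eta_i$ has entry-sum $0$; hence $\langle\omega\rangle$, $\langle\omega_2,\dots,\omega_m\rangle$ and $\langle\omega_{ij}\rangle$ are pairwise orthogonal.

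For the remaining part of~(1) -- that these $1+(m-1)+m(n-1)=v$ vectors generate $\R^v$ -- it suffices to prove spanning, and the decomposition reduces this to two independent checks. First, $\eta_i=n e_i-j_n^{t}$ (writing $e_i$ for the $i$-th standard basis vector of $\R^n$), so $\eta_i-\eta_{i'}=n(e_i-e_{i'})$ and $\eta_2,\dots,\eta_n$ are $n-1$ independent vectors in the $(n-1)$-dimensional sum-zero hyperplane of $\R^n$, hence a basis of it; therefore the $\omega_{ij}$ with $j$ fixed form a basis of $W_j$. Second, identifying $W_0$ with $\R^m$ through the $m$-tuple of block values, $\omega$ corresponds to the all-ones vector $j_m^{t}$ and each $\omega_j$ to a vector of the form $c\,e_j-j_m^{t}$ with $c\neq 0$; from $j_m^{t}$ together with these vectors one recovers $e_2,\dots,e_m$ and then $e_1$, so they span $W_0$. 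Together this shows the listed vectors form a basis of $\R^v$ adapted to $W_0\oplus\bigoplus_{a}W_a$.

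Items~(2)--(4) are then routine. Every listed vector is fixed by $I_v$, giving eigenvalue $1$ throughout. For $J_v$ one has $J_vx=\sigma(x)\,\omega$, where $\sigma(x)$ is the sum of the entries of $x$; since $\sigma(\omega)=v$ while $\sigma(\omega_j)=0$ and $\sigma(\omega_{ij})=\sigma(\eta_i)=0$, the eigenvalues are $v$, $0$, $0$ respectively. For $K_{(m,n)}=I_m\otimes J_n$ the $a$-th block of $K_{(m,n)}x$ equals the entry-sum of $x^{(a)}$ times $j_n^{t}$; since every block-sum of $\omega$ equals $n$, the block-sums of $\omega_j$ equal $n$ times its block values, and every block-sum of $\omega_{ij}$ is $0$, we get $K_{(m,n)}\omega=n\omega$, $K_{(m,n)}\omega_j=n\omega_j$ and $K_{(m,n)}\omega_{ij}=0$, i.e.\ eigenvalues $n$, $n$, $0$. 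I do not expect a genuine obstacle: apart from the spanning argument in~(1), handled by the decomposition $W_0\oplus\bigoplus_a W_a$, everything is bookkeeping, the only point worth a second look being that $\omega_j$ and $\omega_{ij}$ have entry-sum $0$ and that the block-sums of $\omega_j$ are $n$ times its block values, since these are exactly what force the eigenvalues $0$ (for $J_v$) and $n$ (for $K_{(m,n)}$).
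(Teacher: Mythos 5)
Your verification is correct in substance, and it is precisely the routine check that the paper leaves to the reader (its ``proof'' is the single sentence that the claims are easily verified), so there is no methodological divergence to compare. Two remarks are in order, both at the interface with the paper's wording rather than with your argument. First, your assertion that $\sigma(\omega_j)=0$ --- which is what gives $J_v\omega_j=0$ and $\omega\perp\omega_j$ --- holds only if the $j$th block of $\omega_j$ is $(m-1)j_n$; with the block $(n-1)j_n$ as literally printed, one gets $\sigma(\omega_j)=n(n-m)$, and part (3) of the lemma itself would fail unless $m=n$. This is evidently a typo in the paper ($\omega_j$ is meant to be the block-level analogue of $\eta_j$), and your computation is the correct one for the intended vectors, but you should state explicitly that you are using that corrected block value, since as written the step is false for the printed definition. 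Relatedly, the orthogonality of $\langle\omega\rangle$ and $\langle\omega_2,\ldots,\omega_m\rangle$ does \emph{not} follow from the decomposition $W_0\oplus\bigoplus_a W_a$ (both families lie in $W_0$); it is exactly the fact $\sigma(\omega_j)=0$ that you only record later, so the ``hence'' in your orthogonality paragraph is premature, though the needed ingredient does appear in your proof.

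Second, you prove pairwise orthogonality of the three spans rather than mutual orthogonality of all the listed vectors. That weaker statement is in fact the correct one: for instance $\eta_i\cdot\eta_{i'}=-n$ for $i\neq i'$, so the vectors $\omega_{ij}$ sharing the same $j$ are not orthogonal to one another, and similarly the $\omega_j$ are not pairwise orthogonal. Since the later eigenvalue and multiplicity arguments only use that the three families span mutually orthogonal subspaces whose dimensions are $1$, $m-1$ and $m(n-1)$, your formulation (together with the linear-independence count $1+(m-1)+m(n-1)=v$) is the one that should be kept, and it is not a gap that you did not prove the lemma's literal ``mutually orthogonal'' claim.
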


\begin{proposition} \label{prop3.17}
We have $A^2 = (k-\lambda_1) I_v + \lambda_2 J_v + (\lambda_1-\lambda_2) K_{(m,n)}$.
\end{proposition}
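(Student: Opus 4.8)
The plan is to establish the identity entrywise: for each ordered pair of vertices $(x,y)$ I would compute the $(x,y)$-entry of $A^2$ and of the right-hand side, splitting into the three cases that the matrix $K_{(m,n)}$ distinguishes --- namely $x=y$, then $x\ne y$ with $x$ and $y$ in a common class of the canonical partition, and finally $x\ne y$ with $x$ and $y$ in different classes.

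Before the case analysis I would record two elementary observations. Since $\Gamma$ has no multiple edges (a loop being the only possible ``edge'' of a vertex to itself), $A$ is a symmetric $0/1$ matrix, so $A_{xz}^2=A_{xz}$ for all $x,z$; and under the loop convention fixed in Section~\ref{sec1} the row sum $\sum_{z}A_{xz}$ equals $\deg(x)=k$, because a loop at $x$ contributes the single entry $A_{xx}=1$. Consequently $(A^2)_{xy}=\sum_{z}A_{xz}A_{zy}$ is exactly the number of vertices $z$, with $z$ allowed to coincide with $x$ or $y$, that are adjacent (in the loop-inclusive sense) to both $x$ and $y$; this is precisely the count of common neighbours that appears in the definition of an LDDG and in the proof of Proposition~\ref{prop3.1}.

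The three cases are then immediate. If $x=y$, then $(A^2)_{xx}=\sum_{z}A_{xz}^2=\sum_{z}A_{xz}=k$, while the right-hand side has diagonal entry $(k-\lambda_1)+\lambda_2+(\lambda_1-\lambda_2)=k$, since $I_v$, $J_v$ and $K_{(m,n)}$ all carry $1$ on the diagonal. If $x\ne y$ lie in the same class, then $(A^2)_{xy}=\lambda_1$ by the defining property of the LDDG, and the right-hand side gives $0+\lambda_2+(\lambda_1-\lambda_2)=\lambda_1$, because the $(x,y)$-entry of $K_{(m,n)}$ is $1$ exactly for off-diagonal pairs lying in one class. If $x\ne y$ lie in different classes, then $(A^2)_{xy}=\lambda_2$, matching $0+\lambda_2+0$ on the right. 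Hence $A^2$ and the claimed matrix agree in every entry.

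I do not expect a genuine obstacle here; the only thing needing care is the bookkeeping around the loop convention --- ensuring that the diagonal of $A^2$ is read off as $k$ rather than as some other expression, and that ``common neighbour'' is consistently allowed to include $x$ and $y$ themselves --- after which the identity drops out of the three routine entry computations.
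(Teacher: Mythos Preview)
Your proposal is correct and follows essentially the same approach as the paper: both arguments verify the identity entrywise by interpreting $(A^2)_{xy}$ as the number of walks of length~$2$ from $x$ to $y$ and splitting into the three cases $x=y$, $x\ne y$ in the same class, and $x\ne y$ in different classes. Your explicit attention to the loop convention and the $0/1$ nature of $A$ makes the diagonal computation cleaner than the paper's terse version, but the substance is the same.
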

\begin{proof}
This was observed in \cite{HKM} for DDG's, but the argument to show this holds for general LDDG's. If $x$ and $y$ are two vertices, then the number of the walks of length 2 connecting $x$ and $y$ is equal to $k$ if $x=y$, $\lambda_1$ if $x$ and $y$ are two distinct vertices belonging to the same class and $\lambda_2$ if $x$ and $y$ belong to different classes. This implies that $A^2 = k I_v - \lambda_1 (K_{(m,n)} - I_v) + \lambda_2 (J_v - K_{(m,n)}) = (k-\lambda_1) I_v + \lambda_2 J_v + (\lambda_1-\lambda_2) K_{(m,n)}$.
\end{proof}

\bigskip \noindent Using Lemma \ref{lem3.16} and Propositions \ref{prop3.1}, \ref{prop3.17}, we can now compute all eigenvalues of $A^2$ and their corresponding multiplicities. We then see that $\omega$ is an eigenvector of $A^2$ with eigenvalue $(k-\lambda_1) \cdot 1 + \lambda_2 \cdot mn + (\lambda_1 - \lambda_2) \cdot n = k + (n-1) \lambda_1 + \lambda_2 (m-1) n = k^2$, the vectors $\omega_j$, $j \in \{ 2,\ldots,m \}$, are eigenvectors of $A^2$ with eigenvalue $(k-\lambda_1) \cdot 1 + \lambda_2 \cdot 0 + (\lambda_1-\lambda_2) \cdot n = k + (n-1)\lambda - \lambda_2 n = k^2 - \lambda_2 mn = k^2 - \lambda_2 v$, and the vectors $\omega_{ij}$, $i \in \{ 2,\ldots,n \}$ and $j \in \{ 1,\ldots,m \}$, are eigenvectors of $A^2$ with eigenvalue $(k - \lambda_1) \cdot 1 + \lambda_2 \cdot 0 + (\lambda_1- \lambda_2) \cdot 0 = k- \lambda_1$. So, we have found a complete set of eigenvectors for the matrix $A^2$ and their corresponding eigenvalues. As all eigenvalues of $A^2$ are nonnegative integers, we have that $k^2 - \lambda_2 v \geq 0$ and $k - \lambda_1 \geq 0$ (but we already knew the latter, see Proposition \ref{prop3.4}). We thus have the following improvement of the upper bound in Proposition \ref{prop3.5} (also note that $k \leq v-2$ by Proposition \ref{prop3.3}).

\begin{proposition} \label{prop3.18}
We have $\lambda_2 \leq \lfloor \frac{k^2}{v} \rfloor$.
\end{proposition}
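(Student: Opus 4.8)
The plan is to extract the inequality directly from the spectral computation carried out in the paragraph preceding the statement. By Proposition~\ref{prop3.17} we have $A^2 = (k-\lambda_1)I_v + \lambda_2 J_v + (\lambda_1-\lambda_2)K_{(m,n)}$, and Lemma~\ref{lem3.16} provides an orthogonal basis $\{\omega\} \cup \{\omega_j : 2 \le j \le m\} \cup \{\omega_{ij} : 2 \le i \le n,\ 1 \le j \le m\}$ of $\R^v$ consisting of common eigenvectors of $I_v$, $J_v$ and $K_{(m,n)}$, hence of $A^2$. Evaluating $A^2$ on these vectors — and using $k^2 = k + (n-1)\lambda_1 + (m-1)n\lambda_2$ from Proposition~\ref{prop3.1} to simplify the first value — gives eigenvalue $k^2$ on $\omega$, eigenvalue $k^2 - \lambda_2 v$ on each $\omega_j$, and eigenvalue $k - \lambda_1$ on each $\omega_{ij}$. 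Since $m \ge 2$, there is at least one vector $\omega_j$, so $k^2 - \lambda_2 v$ genuinely occurs as an eigenvalue of $A^2$.

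Next I would invoke positive semidefiniteness: $A$ is a real symmetric matrix, so every eigenvalue of $A^2$ is nonnegative (each is the square of an eigenvalue of $A$). Applying this to the eigenvalue $k^2 - \lambda_2 v$ yields $k^2 - \lambda_2 v \ge 0$, that is, $\lambda_2 \le k^2/v$. As $\lambda_2$ is an integer, this improves to $\lambda_2 \le \lfloor k^2/v \rfloor$, which is the assertion.

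There is no real obstacle here: the substantive work, namely constructing the eigenbasis and computing the full spectrum of $A^2$, has already been done, and what remains is the elementary observation that the square of a symmetric matrix has no negative eigenvalue, combined with the integrality of $\lambda_2$. The only point worth flagging is that the bound is not vacuous precisely because $k^2 - \lambda_2 v$ is actually attained as an eigenvalue, which is where the standing hypothesis $m \ge 2$ for a proper LDDG enters.
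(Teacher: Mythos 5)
Your proof is correct and follows essentially the same route as the paper: the preceding discussion in Section~\ref{sec3.2} computes the eigenvalues of $A^2$ via Proposition~\ref{prop3.17} and the eigenvectors of Lemma~\ref{lem3.16}, observes that $k^2-\lambda_2 v$ must be nonnegative since $A$ is real symmetric, and then Proposition~\ref{prop3.18} follows by integrality of $\lambda_2$. Your additional remark that $m\geq 2$ guarantees the eigenvalue $k^2-\lambda_2 v$ actually occurs is a sensible (if implicit in the paper) point, but the argument is otherwise identical.
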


\medskip \noindent Now, put $\mu_0 :=k$, $\mu_1 := \sqrt{k-\lambda_1}$, $\mu_2 := -\mu_1$, $\mu_3 := \sqrt{k^2 - \lambda_2 v}$ and $\mu_4 := - \mu_3$. Let $E_0$ be the subspace generated by the eigenvector $\omega$. The space generated by the vectors $\omega_{ij}$, $i \in \{ 2,\ldots,n \}$ and $j \in \{ 1,\ldots,m \}$, can be written as an orthogonal direct sum $E_1 \oplus E_2$ such that every nonzero vector $E_i$, $i \in \{ 1,2 \}$, is an eigenvector of $A$ for the eigenvalue $\mu_i$. Note that if $\mu_1=-\mu_2 \not= 0$, then this decomposition is unique. Similarly, the space generated by the vectors $\omega_j$, $j \in \{ 2,\ldots,m \}$, can be written as an orthogonal direct sum $E_3 \oplus E_4$ such that every nonzero vector of $E_i$, $i \in \{ 3,4 \}$, is an eigenvector of $A$ for the eigenvalue $\mu_i$. Again, if $\mu_3=-\mu_4 \not= 0$, then this decomposition is unique. We thus have the orthogonal direct sum
\[ \R^v = E_0 \oplus E_1 \oplus E_2 \oplus E_3 \oplus E_4,  \]
where 
\begin{itemize}
\item $\dim(E_0)=1$, $\dim(E_1 \oplus E_2)=mn-m$, $\dim(E_3 \oplus E_4)=m-1$,
\item every nonzero vector of $E_i$, $i \in \{ 1,2,3,4 \}$, is an eigenvector of $A$ with eigenvalue $\mu_i$.
\end{itemize}
If we put $f_i:=\dim(E_i)$, $i \in \{ 0,1,2,3,4 \}$, then we have $f_0=1$, $f_1+f_2=mn-m$ and $f_3+f_4=m-1$. As the trace of a matrix equals the sum of its eigenvalues, taking into account their multiplicities, and the trace of $A$ equals the number $L$ of loops in $\Gamma$, the following holds.

\begin{lemma} \label{lem3.19}
We have $L=k + (f_1-f_2) \cdot \sqrt{k-\lambda_1} + (f_3 - f_4) \cdot \sqrt{k^2-\lambda_2 v}$.
\end{lemma}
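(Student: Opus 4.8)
The plan is to compute the trace of the adjacency matrix $A$ of $\Gamma$ in two different ways and equate the results.

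First I would record the combinatorial meaning of the diagonal of $A$. With the convention adopted in Section~\ref{sec1} — that a loop contributes exactly $1$ to the degree of its vertex — the diagonal entry $A_{xx}$ equals $1$ precisely when the vertex $x$ carries a loop, and equals $0$ otherwise. Hence $\mathrm{tr}(A) = \sum_{x \in V} A_{xx}$ counts exactly the loops of $\Gamma$, i.e. $\mathrm{tr}(A) = L$.

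Next I would invoke the orthogonal eigenspace decomposition $\R^v = E_0 \oplus E_1 \oplus E_2 \oplus E_3 \oplus E_4$ constructed just before the statement. This is a genuine decomposition of $\R^v$ into eigenspaces of the symmetric matrix $A$: by Lemma~\ref{lem3.16}(1) the vectors $\omega$, $\omega_j$ and $\omega_{ij}$ span $\R^v$, and the computation of the eigenvalues of $A^2$ carried out above shows that $E_i$ consists of eigenvectors of $A$ for the eigenvalue $\mu_i$ and has dimension $f_i$, with $f_0 = 1$. Since the trace of a matrix equals the sum of its eigenvalues counted with multiplicity, we obtain $\mathrm{tr}(A) = \sum_{i=0}^{4} f_i \mu_i = f_0 k + f_1 \sqrt{k-\lambda_1} - f_2 \sqrt{k-\lambda_1} + f_3 \sqrt{k^2-\lambda_2 v} - f_4 \sqrt{k^2-\lambda_2 v}$.

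Finally, substituting $f_0 = 1$ and collecting the two pairs of like terms gives $\mathrm{tr}(A) = k + (f_1 - f_2)\sqrt{k-\lambda_1} + (f_3 - f_4)\sqrt{k^2-\lambda_2 v}$, and comparing this with $\mathrm{tr}(A) = L$ yields the claimed identity. There is no substantial obstacle in this argument; the only points requiring care are the loop/degree convention, which has already been fixed in Section~\ref{sec1}, and the verification that the five listed subspaces really do exhaust $\R^v$ — but both of these are already in place by the time the statement is reached, so the proof is a short two-line trace computation.
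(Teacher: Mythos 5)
Your proposal is correct and is essentially the paper's own argument: the paper likewise obtains the identity by computing the trace of $A$ in two ways, as the number $L$ of loops and as the sum $\sum_i f_i\mu_i$ over the eigenspace decomposition $E_0\oplus E_1\oplus E_2\oplus E_3\oplus E_4$ with $f_0=1$. Nothing is missing; the remarks on the loop convention and the completeness of the decomposition are exactly the points the paper has already settled before stating the lemma.
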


\medskip \noindent Let $X_1,X_2,\ldots,X_m$ denote the classes of the canonical partition. For all $i,j \in \{ 1,2,\ldots,m \}$, let $N_{ij}$ denote the number of ordered pairs $(u,v) \in X_i \times X_j$ for which $u \sim v$. Let $N^\ast$ denote the number of ordered pairs $(u,v)$ of vertices of $\Gamma$ for which $u \sim v$ and $u$, $v$ belong to the same class. Note that $N^\ast = N_{11} + N_{22} + \cdots + N_{mm}$. Let $R$ denote the $m \times m$ real symmetric matrix whose $(i,j)$th entry is equal to $\frac{1}{n} N_{ij}$. The following was proved in \cite[Theorem 3.1]{HKM}.

\begin{lemma}[\cite{HKM}] \label{lem3.20}
We have $R^2 = (k^2 - \lambda_2 v) I_m + \lambda_2 n J_m$. The possible eigenvalues of $R$ are $\mu_0=k$, $\mu_3 = \sqrt{k^2-\lambda_2 v}$ and $\mu_4 = - \sqrt{k^2-\lambda_2v}$. The vector $j_m^t$ is an eigenvector of $R$ for the eigenvalue $k$. If $k^2-\lambda_2 v \not= 0$, then the multiplicities of $\mu_3$ and $\mu_4$ as eigenvalues of $R$ inside $j_m^\perp$ coincide with the multiplicities $f_3$ and $f_4$ of $\mu_3$ and $\mu_4$ as eigenvalues of $A$ inside the space $E_3 \oplus E_4$.  
\end{lemma}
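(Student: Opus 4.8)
The plan is to realise $R$ as a compressed copy of an adjacency matrix $A$ of $\Gamma$, namely the restriction of $A$ to the subspace $W \subseteq \R^v$ of all vectors that are constant on each class of the canonical partition, and then to carry over the spectral information about $A$ established just above. This is in essence the argument of \cite[Theorem~3.1]{HKM}.

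I would first introduce the $v \times m$ characteristic matrix $S$ of the partition $\{X_1,\dots,X_m\}$, whose $(u,i)$-entry is $1$ if $u \in X_i$ and $0$ otherwise. A direct count gives $S^tS = nI_m$, $SS^t = K_{(m,n)}$ and $S^tAS = nR$, the $(i,j)$-entry of $S^tAS$ being exactly $N_{ij}$. The column space of $S$ is precisely $W$; since $\omega$ and the vectors $\omega_j$ all lie in $W$ and span an $m$-dimensional subspace, $W = E_0 \oplus E_3 \oplus E_4$. The span $E_1 \oplus E_2$ of the vectors $\omega_{ij}$ is a \emph{full} eigenspace of $A^2$: its $A^2$-eigenvalue $k-\lambda_1$ is different from the other two $A^2$-eigenvalues $k^2$ and $k^2-\lambda_2 v$, since $k^2-(k-\lambda_1) = k(k-1)+\lambda_1 \geq 2$ by Proposition~\ref{prop3.3} and since $k-\lambda_1 = k^2-\lambda_2 v$ would force $\lambda_1 = \lambda_2$ by Proposition~\ref{prop3.1}. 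Hence $E_1 \oplus E_2$ is $A$-invariant, and therefore so is its orthogonal complement $W$. Consequently $AS$ again has all its columns in $W$, on which $K_{(m,n)}$ acts as multiplication by $n$, so $K_{(m,n)}AS = nAS$ and
\[ n^2R^2 = S^tA(SS^t)AS = S^tAK_{(m,n)}AS = nS^tA^2S. \]
Substituting $A^2 = (k-\lambda_1)I_v + \lambda_2 J_v + (\lambda_1-\lambda_2)K_{(m,n)}$ from Proposition~\ref{prop3.17} and simplifying with $S^tS = nI_m$, $S^tJ_vS = n^2 J_m$ and $S^tK_{(m,n)}S = n^2 I_m$ gives $R^2 = \big(k-\lambda_1+(\lambda_1-\lambda_2)n\big)I_m + \lambda_2 n J_m$, whose $I_m$-coefficient equals $k^2-\lambda_2 v$ by Proposition~\ref{prop3.1}. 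This is the first assertion.

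The remaining assertions are then pure linear algebra. Since $R$ is real symmetric and $R^2 = (k^2-\lambda_2 v)I_m + \lambda_2 n J_m$ acts as $(k^2-\lambda_2 v)I_m$ on $j_m^\perp$, every eigenvalue of $R$ lying in $j_m^\perp$ is $\pm\sqrt{k^2-\lambda_2 v}$, while $Rj_m^t = kj_m^t$ because each row sum of $R$ equals $\tfrac1n\sum_j N_{ij} = \tfrac1n\cdot nk = k$. For the multiplicities I would note that $\Phi := \tfrac1{\sqrt n}S$ is an isometry of $\R^m$ onto $W$ with inverse $\tfrac1{\sqrt n}S^t|_W$ (because $\tfrac1n SS^t$ is the identity on $W$), and that $R = \Phi^{-1}(A|_W)\Phi$; hence $R$ is orthogonally similar to the restriction $A|_W$. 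As $A|_W$ acts as $k$ on $E_0 = \langle\omega\rangle$, as $\mu_3$ on $E_3$ and as $\mu_4$ on $E_4$, and $\Phi^{-1}\omega$ is a nonzero multiple of $j_m^t$, it follows that the $k$-eigenspace of $R$ is $\langle j_m^t\rangle$ and that, when $k^2-\lambda_2 v \neq 0$, the multiplicities of $\mu_3$ and $\mu_4$ as eigenvalues of $R$ inside $j_m^\perp$ coincide with $f_3$ and $f_4$.

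The step I expect to require the most care is the identification $W = E_0 \oplus E_3 \oplus E_4$ together with the $A$-invariance of $W$: combinatorially this says that the canonical partition of a proper LDDG is equitable, and it is exactly here that the hypotheses $k \geq 2$ and $\lambda_1 \neq \lambda_2$ genuinely enter, through the fact that $k-\lambda_1$ is not among the other two eigenvalues of $A^2$. Everything after that is routine manipulation of the three identities $S^tS = nI_m$, $SS^t = K_{(m,n)}$ and $S^tAS = nR$.
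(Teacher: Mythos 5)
Your proposal is correct and is essentially the same argument as the one the paper relies on: the paper does not prove Lemma \ref{lem3.20} itself but cites \cite[Theorem 3.1]{HKM} and merely remarks that $u \mapsto Su$ (with $S = I_m \otimes j_n^t$) carries eigenvectors of $R$ to eigenvectors of $A$ inside $E_0 \oplus E_3 \oplus E_4$, which is exactly the intertwining relation $AS = SR$ underlying your compression $R = \tfrac{1}{n}S^tAS$; what you add beyond the citation is the verification (valid precisely for proper LDDG's) that $W = E_0 \oplus E_3 \oplus E_4$ is $A$-invariant, via $k-\lambda_1 \notin \{k^2,\, k^2-\lambda_2 v\}$, and that step is sound. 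One small overstatement: your parenthetical claim that the $k$-eigenspace of $R$ equals $\langle j_m^t \rangle$ fails when $\lambda_2 = 0$ (then $\mu_3 = k$), but the lemma only asserts that $j_m^t$ is a $k$-eigenvector, and your multiplicity argument inside $j_m^\perp$ is unaffected.
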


\medskip \noindent In fact, in \cite{HKM}, it was observed that if $u$ is an eigenvector of $R$, then $S u$ with $S = I_m \otimes j_n^t$ is an eigenvector of $A$ for the same eigenvalue. Note that such an eigenvector $S u$ necessarily belongs to the subspace $E_0 \oplus E_3 \oplus E_4$, explaining the latter claim in Lemma \ref{lem3.20}. The trace of $R$ equals $\frac{N^\ast}{n}$ and is also equal to the sum of its eigenvalues, taking into account their multiplicities. We thus have

\begin{corollary} \label{co3.21}
We have $(f_3 - f_4 )\cdot \sqrt{k^2-\lambda_2 v} = -k + \frac{N^\ast}{n}$.
\end{corollary}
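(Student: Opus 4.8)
The plan is to evaluate the trace of the matrix $R$ in two ways. Combinatorially, the $(i,i)$-th entry of $R$ equals $\frac{1}{n}N_{ii}$, so the trace of $R$ equals $\frac{1}{n}(N_{11}+N_{22}+\cdots+N_{mm}) = \frac{N^\ast}{n}$ by the definition of $N^\ast$. Spectrally, since $R$ is real symmetric, its trace equals the sum of its eigenvalues counted with multiplicity, and the whole problem reduces to organizing that sum by means of Lemma~\ref{lem3.20}.

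For the spectral side I would split $\R^m = \langle j_m^t\rangle \oplus j_m^\perp$ into $R$-invariant subspaces: the first is invariant because $R j_m^t = k j_m^t$ by Lemma~\ref{lem3.20}, and the second because $R$ is symmetric. Hence the trace of $R$ equals $k$ plus the trace of the restriction $R|_{j_m^\perp}$. When $k^2-\lambda_2 v \neq 0$, Lemma~\ref{lem3.20} tells us that on $j_m^\perp$ the only eigenvalues of $R$ are $\mu_3=\sqrt{k^2-\lambda_2 v}$ and $\mu_4=-\mu_3$, occurring with multiplicities $f_3$ and $f_4$; since $f_3+f_4 = m-1 = \dim(j_m^\perp)$, these exhaust the space and there is no room for any other eigenvalue, so the trace of $R|_{j_m^\perp}$ equals $f_3\mu_3 + f_4\mu_4 = (f_3-f_4)\sqrt{k^2-\lambda_2 v}$. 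Equating the two evaluations of the trace of $R$ and rearranging then gives precisely the asserted identity.

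The only point needing a separate (and easy) remark is the degenerate case $k^2-\lambda_2 v = 0$, where $\mu_3=\mu_4=0$ and the relation of Lemma~\ref{lem3.20} reads $R^2 = \lambda_2 n J_m$. Here I would argue directly that $R$ kills $j_m^\perp$: for $w\in j_m^\perp$ we have $J_m w = 0$, hence $R^2 w = 0$, and then $\|Rw\|^2 = w^t R^2 w = 0$ since $R$ is symmetric, so $Rw=0$. Thus the trace of $R|_{j_m^\perp}$ is $0 = (f_3-f_4)\sqrt{k^2-\lambda_2 v}$ trivially, the double count gives $\frac{N^\ast}{n}=k$, and the identity of the corollary — which reads $0 = -k + \frac{N^\ast}{n}$ in this case — holds. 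I do not anticipate any genuine obstacle: the argument is simply a double count of the trace of $R$, and the one thing to watch is that the multiplicities $f_3,f_4$ must be taken relative to $j_m^\perp$, which is exactly how Lemma~\ref{lem3.20} delivers them — even in the border cases where $\mu_3$ happens to coincide with $k$ or with $0$.
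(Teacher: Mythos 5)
Your proposal is correct and follows essentially the same route as the paper: the corollary is obtained by computing the trace of $R$ in two ways, combinatorially as $\frac{N^\ast}{n}$ and spectrally as $k + (f_3-f_4)\sqrt{k^2-\lambda_2 v}$ via Lemma~\ref{lem3.20}. Your explicit treatment of the degenerate case $k^2=\lambda_2 v$ and of the multiplicities being taken inside $j_m^\perp$ only spells out details the paper leaves implicit.
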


\medskip \noindent If $k^2 \not= \lambda_2 v$, then Corollary \ref{co3.21} in combination with $f_3 + f_4 = m-1$ allows to determine $f_3$ and $f_4$. If $k^2=\lambda_2 v$, then the multiplicity of $\mu_3=\mu_4=0$ as eigenvalue of $A$ inside $E_3 \oplus E_4$ is equal to $m-1$ (in this case, only the value $f_3+f_4$ is relevant, and not those of the individual $f_3$ and $f_4$). Corollary \ref{co3.21} implies the following:

\begin{corollary} \label{co3.22}
If $k^2 - \lambda_2 v$ is not a square, then $m$ is odd, $f_3=f_4=\frac{m-1}{2}$ and $N^\ast = kn$.
\end{corollary}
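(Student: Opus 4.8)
The plan is to read off everything from Corollary~\ref{co3.21} together with two elementary integrality facts. First I would recall that, by the discussion in the paragraph preceding Proposition~\ref{prop3.18}, the quantity $k^2-\lambda_2 v$ occurs as an eigenvalue of $A^2$ and is therefore a \emph{nonnegative integer}. Consequently, if $k^2-\lambda_2 v$ is not a perfect square, the classical fact that the square root of a non-square nonnegative integer is irrational shows that $\mu_3=\sqrt{k^2-\lambda_2 v}$ is irrational; in particular $k^2-\lambda_2 v\neq 0$, so Corollary~\ref{co3.21} applies in its exact form.

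Next I would invoke Corollary~\ref{co3.21}, which states $(f_3-f_4)\cdot\sqrt{k^2-\lambda_2 v}=\frac{N^\ast}{n}-k$. The right-hand side is rational, since $N^\ast$ is a nonnegative integer, $n$ a positive integer, and $k$ an integer, while $f_3-f_4$ is an integer. Because $\mu_3$ is irrational, the product $(f_3-f_4)\mu_3$ can only be rational when $f_3-f_4=0$, i.e. $f_3=f_4$. Substituting this back into the same identity forces $\frac{N^\ast}{n}-k=0$, that is, $N^\ast=kn$.

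Finally I would combine $f_3=f_4$ with the relation $f_3+f_4=m-1$ recorded in Subsection~\ref{sec3.2}: this gives $m-1=2f_3$, so $m-1$ is even, $m$ is odd, and $f_3=f_4=\frac{m-1}{2}$, which completes the argument.

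There is no genuine obstacle here; the statement is a direct arithmetic corollary of Corollary~\ref{co3.21}. The only point meriting a line of care is the justification that $k^2-\lambda_2 v$ is a nonnegative integer, so that the hypothesis ``not a square'' indeed forces irrationality of its square root rather than merely excluding finitely many values.
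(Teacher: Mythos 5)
Your proof is correct and matches the paper's (implicit) argument: the paper derives Corollary \ref{co3.22} directly from Corollary \ref{co3.21} by exactly this irrationality-versus-rationality reasoning, together with $f_3+f_4=m-1$. Your extra remark that $k^2-\lambda_2 v$ is a nonnegative integer (being an eigenvalue of $A^2$) is a welcome but routine point already noted in the paper before Proposition \ref{prop3.18}.
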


\medskip \noindent Lemma \ref{lem3.19}, Corollaries \ref{co3.21}, \ref{co3.22} and the fact that $f_1+f_2=(m-1)n$ also imply the following.

\begin{corollary} \label{co3.23}
\begin{enumerate}
\item[$(a)$] We have $L=\frac{N^\ast}{n} + (f_1-f_2) \cdot \sqrt{k-\lambda_1}$.
\item[$(b)$] If $k-\lambda_1$ is not a square, then $(m-1)n$ is even, $f_1=f_2=\frac{(m-1)n}{2}$ and $N^\ast = nL$.
\item[$(c)$] If neither of $k-\lambda_1$ and $k^2 - \lambda_2 v$ is a square, then the number $L$ of loops inside $\Gamma$ equals its valency $k$.  
\end{enumerate}
\end{corollary}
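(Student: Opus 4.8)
The plan is to deduce all three parts by combining Lemma~\ref{lem3.19}, Corollaries~\ref{co3.21} and~\ref{co3.22}, the integrality of $L$, and the identity $f_1+f_2=(m-1)n$.

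\emph{Part (a).} Here I would simply substitute Corollary~\ref{co3.21} into Lemma~\ref{lem3.19}. Writing $(f_3-f_4)\sqrt{k^2-\lambda_2 v} = -k + \frac{N^\ast}{n}$ and plugging this into $L = k + (f_1-f_2)\sqrt{k-\lambda_1} + (f_3-f_4)\sqrt{k^2-\lambda_2 v}$, the two occurrences of $k$ cancel and what remains is $L = \frac{N^\ast}{n} + (f_1-f_2)\sqrt{k-\lambda_1}$.

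\emph{Part (b).} The idea is a rationality argument. Since $L$ is a nonnegative integer (it counts the loops of $\Gamma$) and $N^\ast,n$ are integers, the number $L - \frac{N^\ast}{n}$ is rational; by part (a) it equals $(f_1-f_2)\sqrt{k-\lambda_1}$. By Proposition~\ref{prop3.4} the quantity $k-\lambda_1$ is a nonnegative integer, and the hypothesis that it is not a perfect square forces $k-\lambda_1 \geq 1$, so $\sqrt{k-\lambda_1} \notin \Q$. Hence $f_1-f_2=0$. Together with $f_1+f_2=(m-1)n$ this shows that $(m-1)n$ is even and $f_1=f_2=\frac{(m-1)n}{2}$; feeding $f_1=f_2$ back into part (a) gives $L=\frac{N^\ast}{n}$, that is, $N^\ast = nL$.

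\emph{Part (c).} If moreover $k^2-\lambda_2 v$ is not a perfect square, then Corollary~\ref{co3.22} applies and yields $N^\ast = kn$. Comparing this with $N^\ast = nL$ from part (b) and dividing by $n$, we conclude $L=k$. The only step that requires a moment's care is the irrationality observation in (b): one must note that "$k-\lambda_1$ is not a square" rules out $k-\lambda_1=0$, so that $k-\lambda_1$ is a positive non-square integer and its square root is irrational. Everything else is a routine substitution.
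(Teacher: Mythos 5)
Your proposal is correct and follows exactly the route the paper intends: the paper derives this corollary by citing Lemma~\ref{lem3.19}, Corollaries~\ref{co3.21} and~\ref{co3.22} and the relation $f_1+f_2=(m-1)n$, and your substitution for (a), rationality/irrationality argument for (b), and comparison $N^\ast=kn$ versus $N^\ast=nL$ for (c) are precisely how those ingredients combine. Your explicit remark that ``not a square'' excludes $k-\lambda_1=0$ is a correct and welcome detail that the paper leaves implicit.
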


\medskip \noindent If $k \not= \lambda_1$, then Corollary \ref{co3.23}(a) in combination $f_1+f_2=(m-1)n$ allows to determine $f_1$ and $f_2$. If $k=\lambda_1$, then the multiplicity of $\mu_1=\mu_2=0$ as eigenvalue of $A$ inside $E_1 \oplus E_2$ is equal to $f_1+f_2=(m-1)n$. Again, in this case, only the sum $f_1+f_2$ is then relevant and not the individual values of $f_1$ and $f_2$. Note that for a proper DDG (without loops), Corollary \ref{co3.23}(c) implies that at least one of $k-\lambda_1$, $k^2 - \lambda_2 v$ is a square. This was already shown in Theorem 2.2(a) of \cite{HKM}.

\subsection{LDDG's and automorphisms of order 2} \label{sec3.3}

In this subsection, we show how an automorphism of order 2 in an LDDG results in another (potentially nonisomorphic) LDDG. Let us start from a general undirected graph $\Gamma$ with vertex set $\{ 1,2,\ldots,v \}$, in which loops are allowed, and let $A$ be the adjacency matrix of $\Gamma$ with respect to the natural ordering of the vertices. The total number of loops in $\Gamma$ is then equal to the trace of $A$. The following is a straightforward translation of the conditions for $\Gamma$ to be an LDDG.

\begin{lemma} \label{lem3.24}
The graph $\Gamma$ is an LDDG with parameters $(v,k,\lambda_1,\lambda_2,m,n)$ if the following conditions hold:
\begin{enumerate}
\item[$(1)$] $A$ is a (symmetric) $(v \times v)$-matrix;
\item[$(2)$] in each row (and column) of $A$, there are exactly $k$ entries that are equal to $1$;
\item[$(3)$] there exists a partition $\{ X_1,X_2,\ldots,X_m \}$ of $\{ 1,2,\ldots,v \}$ in $m$ subsets of size $n$ for which the following hold:
\begin{itemize}
\item if $i$ and $j$ are two distinct elements of $\{ 1,2,\ldots,v \}$ belonging to the same $X_l$, then there are exactly $\lambda_1$ positions in which the entries in both rows are equal to $1$;  
\item if $i$ and $j$ are two elements of $\{ 1,2,\ldots,v \}$ belonging to distinct $X_l$'s, then there are exactly $\lambda_2$ positions in which the entries in both rows are equal to $1$.
\end{itemize}
\end{enumerate}
\end{lemma}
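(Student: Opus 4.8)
The plan is to read off, one condition at a time, the correspondence between the three matrix requirements in the statement and the three defining properties of an LDDG on $v$ vertices with parameters $(v,k,\lambda_1,\lambda_2,m,n)$ as recalled in Section~\ref{sec1}. Since an LDDG is by definition an undirected graph in which loops are allowed, a graph on the vertex set $\{1,2,\ldots,v\}$ of this kind is exactly the same datum as a symmetric $0/1$ matrix $A$ of size $v\times v$, namely its adjacency matrix with respect to the given ordering of the vertices, the diagonal entry $A_{ii}$ being equal to $1$ precisely when $i$ carries a loop. This is exactly what condition $(1)$ records, so it remains only to match conditions $(2)$ and $(3)$ with the regularity condition and the common-neighbour conditions.

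For condition $(2)$: by the convention adopted in Section~\ref{sec1}, a loop at a vertex $i$ contributes exactly $1$ (and not $2$) to $\deg_\Gamma(i)$, so the number of entries equal to $1$ in the $i$th row of $A$ is precisely $\deg_\Gamma(i)$. Requiring every row (equivalently, by symmetry of $A$, every column) to contain exactly $k$ ones is therefore literally the assertion that $\Gamma$ is $k$-regular. For condition $(3)$: given two vertices $i$ and $j$, distinct or not, the positions $l$ at which the $i$th and the $j$th rows of $A$ both have a $1$ are exactly the vertices $l$ with $l\sim i$ and $l\sim j$, that is, the common neighbours of $i$ and $j$, where — as in the proof of Proposition~\ref{prop3.1} — such an $l$ is permitted to coincide with $i$ or with $j$ (which happens precisely when the vertex in question has a loop and is adjacent to the other). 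Hence the two bullet points of condition $(3)$, together with the fact that $\{X_1,X_2,\ldots,X_m\}$ is a partition of $\{1,2,\ldots,v\}$ into $m$ parts of size $n$, say precisely that any two distinct vertices lying in the same part have $\lambda_1$ common neighbours, and any two vertices lying in different parts have $\lambda_2$ common neighbours. This is exactly the definition of an LDDG with parameters $(v,k,\lambda_1,\lambda_2,m,n)$, the parts $X_i$ serving as the classes, so the conclusion follows.

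There is essentially no obstacle: the argument is a dictionary translation, and the content of the proof is just the verification that no information is gained or lost in passing between the graph-theoretic and matrix formulations. The only two points that require a moment's care — and are the reason the lemma is worth isolating — are the nonstandard loop convention (a loop counts $1$ towards the degree and shows up as a single $1$ on the diagonal of $A$) and the fact that a vertex may itself be counted among the common neighbours of a pair of vertices; both are disposed of by the two observations above. I would present the proof in roughly the three short steps indicated: translate $(1)$ into ``$\Gamma$ is an undirected graph with possible loops on $v$ vertices'', translate $(2)$ into ``$\Gamma$ is $k$-regular'' using the degree convention, and translate $(3)$ into the $\lambda_1,\lambda_2$ common-neighbour conditions using the row-intersection interpretation.
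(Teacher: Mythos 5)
Your proof is correct and matches the paper's treatment: the paper offers no argument beyond calling the lemma ``a straightforward translation'' of the LDDG definition into adjacency-matrix language, and your three-step dictionary (symmetric $0/1$ matrix $\leftrightarrow$ graph with loops, row sums $\leftrightarrow$ $k$-regularity under the loop-counts-one convention, common $1$-positions $\leftrightarrow$ common neighbours, possibly equal to one of the two vertices) is exactly that translation made explicit.
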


\medskip \noindent The properties (1), (2) and (3) remain invariant under row permutations, except for the condition that $A$ is symmetric. This implies the following.

\begin{lemma} \label{lem3.25}
Suppose $P$ is a permutation matrix and $B$ is a $v \times v$-matrix with all entries equal to $1$ such that both $B$ and $PB$ are symmetric matrices. Let $\Omega$ and $\Omega'$ be the undirected graphs for which $B$ and $PB$ are the respective adjacency matrices. Then $\Omega$ is an LDDG if and only if $\Omega'$ is an LDDG, in which case the parameters of both LDDG's are the same. 
\end{lemma}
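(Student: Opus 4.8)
The plan is to deduce the statement directly from Lemma \ref{lem3.24}, together with the observation recorded just before it: conditions (1)--(3) there are invariant under permutations of the rows of the matrix, the only caveat being the symmetry requirement inside (1). The point is that left multiplication by a permutation matrix $P$ is precisely such a row permutation — if $P$ realises the permutation $\pi$ of $\{1,\dots,v\}$, then the $i$-th row of $PB$ is the $\pi(i)$-th row of $B$ — and that the symmetry caveat is neutralised in our situation, since $B$ and $PB$ are symmetric by hypothesis and hence really are adjacency matrices of undirected graphs $\Omega$, $\Omega'$ (with possible loops).

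Concretely, I would verify three things. First, condition (1): it holds for both $B$ and $PB$ by the symmetry hypothesis. Second, condition (2): a row permutation leaves the multiset of rows unchanged, so every row of $PB$ still carries exactly $k$ ones, and it merely reshuffles the entries inside each fixed column, so every column of $PB$ still carries exactly $k$ ones; hence $B$ satisfies (2) with parameter $k$ if and only if $PB$ does. Third, condition (3): if $\{X_1,\dots,X_m\}$ witnesses it for $B$ with parameters $\lambda_1,\lambda_2$, then $\{\pi^{-1}(X_1),\dots,\pi^{-1}(X_m)\}$ witnesses it for $PB$ with the same $\lambda_1,\lambda_2$, because rows $i,j$ of $PB$ lie in a common new class exactly when rows $\pi(i),\pi(j)$ of $B$ lie in a common old class, and the number of positions where both the $i$-th and $j$-th rows of $PB$ equal $1$ coincides with the number of positions where both the $\pi(i)$-th and $\pi(j)$-th rows of $B$ equal $1$. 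The new classes still have size $n$ and there are still $m$ of them, so $v=mn$ is unchanged as well.

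Putting these together, $B$ satisfies (1), (2) and (3) with a prescribed tuple $(v,k,\lambda_1,\lambda_2,m,n)$ precisely when $PB$ does — the argument being symmetric in $B$ and $PB$, since $B=P^{-1}(PB)$ with $P^{-1}$ again a permutation matrix — so by Lemma \ref{lem3.24}, which by its very derivation may be read as an equivalence, $\Omega$ is an LDDG exactly when $\Omega'$ is, and then the parameters coincide. I expect no genuine obstacle here; the only step warranting a moment of care is the ``column'' half of condition (2), since it is not a priori obvious that permuting rows preserves column sums — but it does, as it only permutes the entries within each fixed column. Everything else is routine bookkeeping with the permutation $\pi$.
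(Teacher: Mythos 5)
Your proposal is correct and follows essentially the same route as the paper: the paper simply remarks that conditions (1)--(3) of Lemma~\ref{lem3.24} are invariant under row permutations except for the symmetry requirement, which is here assumed for both $B$ and $PB$, and deduces the lemma. You have merely spelled out that one-line observation (row $i$ of $PB$ being row $\pi(i)$ of $B$, pulling the classes back along $\pi^{-1}$, and noting the column-sum/row-sum invariance), which is exactly the intended argument.
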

 
\medskip \noindent We show that suitable permutation matrices $P$ arise from certain automorphisms of order $2$ of $\Gamma$. Suppose $\theta$ is a permutation of $\{ 1,2,\ldots,v \}$. Then associated with $\theta$, there is a permutation matrix $P$ defined by
\begin{displaymath}
P_{ij} := \left\{
\begin{array}{ll}
1 & \mbox{if } j=i^\theta, \\
0 & \mbox{otherwise}, 
\end{array}
\right.
\end{displaymath}
for elements $i,j \in \{ 1,2,\ldots,v \}$. The following gives a sufficient conditions for $P_\theta A$ to be a symmetric matrix.

\begin{lemma} \label{lem3.26}
If $\theta$ is an automorphism of $\Gamma$ such that $\theta^2$ is the identity, then $P_\theta A$ is a symmetric matrix and hence the adjacency matrix of a graph $\Gamma_\theta$.
\end{lemma}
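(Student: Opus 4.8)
The plan is to verify the symmetry of $P_\theta A$ by a direct entrywise computation, exploiting the two defining properties of $\theta$: that it preserves adjacency and that it squares to the identity.

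First I would record the matrix form of the hypotheses. Since $\theta$ is an automorphism of $\Gamma$, the (possibly improper) pair $\{i,j\}$ is an edge exactly when $\{i^\theta,j^\theta\}$ is, so $A_{ij}=A_{i^\theta j^\theta}$ for all $i,j\in\{1,2,\ldots,v\}$. From the definition of $P_\theta$ one gets $(P_\theta A)_{ij}=\sum_k (P_\theta)_{ik}A_{kj}=A_{i^\theta j}$; that is, left multiplication by $P_\theta$ replaces the $i$th row of $A$ by its $i^\theta$th row.

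Next I would establish $(P_\theta A)_{ij}=(P_\theta A)_{ji}$. Applying the automorphism identity with $i$ replaced by $i^\theta$ (and $j$ unchanged), and using that $\theta^2$ is the identity so that $(i^\theta)^\theta=i$, gives $A_{i^\theta j}=A_{(i^\theta)^\theta\, j^\theta}=A_{i\,j^\theta}$. Since $A$ is itself symmetric, $A_{i\,j^\theta}=A_{j^\theta\,i}=(P_\theta A)_{ji}$, so $(P_\theta A)_{ij}=(P_\theta A)_{ji}$ and $P_\theta A$ is symmetric. Finally, because $P_\theta A$ is merely a row permutation of the $0/1$-matrix $A$, all of its entries lie in $\{0,1\}$; being symmetric as well, it is the adjacency matrix of a well-defined undirected graph $\Gamma_\theta$ (with possible loops) on the same vertex set.

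There is no genuine obstacle here; the only point requiring a bit of care is the bookkeeping with the convention for $P_\theta$ (whether $(P_\theta A)_{ij}$ equals $A_{i^\theta j}$ or $A_{i\,j^\theta}$) and invoking the automorphism hypothesis in the form $A_{ij}=A_{i^\theta j^\theta}$. It is precisely the involution property $\theta^2=\mathrm{id}$ that reconciles these two expressions and forces the symmetry; without it one only gets that $P_\theta A$ is a row permutation of $A$, not that it is symmetric.
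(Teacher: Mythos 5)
Your proof is correct and is essentially the paper's argument, just carried out entrywise: the paper phrases the same two facts at the matrix level, using $P_\theta A P_\theta^{-1}=A$ (the automorphism condition) together with $P_\theta^t=P_\theta$ (from $\theta^2=\mathrm{id}$) to compute $(P_\theta A)^t=A^tP_\theta^t=AP_\theta=P_\theta A$, which is exactly your identity $A_{i^\theta j}=A_{i\,j^\theta}=A_{j^\theta i}$ unpacked. Your bookkeeping with the convention $(P_\theta A)_{ij}=A_{i^\theta j}$ matches the paper's, and the concluding observation that a symmetric $0/1$-matrix is the adjacency matrix of a graph with possible loops is the same.
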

\begin{proof}
We make use of the following known fact:
\begin{quote}
$\theta$ is an automorphism of $\Gamma$ if and only if $P_\theta A P_\theta^{-1}=A$.
\end{quote}
As $\theta^2=1$, the condition $j=i^\theta$ implies that $i=j^\theta$ for $i,j \in \{ 1,2,\ldots,v \}$, implying that $P_\theta = P_\theta^t$. We then have $(P_\theta A)^t = A^t P_\theta^t = A P_\theta$. By the above $AP_\theta = P_\theta A$ as $\theta$ is an automorphism. So, $P_\theta A$ is indeed a symmetric matrix. 
\end{proof}

\begin{lemma} \label{lem3.27}
Suppose $\theta$ is an automorphism of $\Gamma$ for which $\theta^2$ is the identity. Let $N$ denote the number of loops of $\Gamma$. Let $A_1$ denote the number of vertices $v$ without loops for which $v^\theta \sim_\Gamma v$, and let $A_2$ denote the number of vertices $w$ with loops for which $w^\theta \not\sim_\Gamma w$. Then the number of loops in $\Gamma_\theta$ is equal to $N+A_1-A_2$.
\end{lemma}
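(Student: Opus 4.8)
The plan is to compute the number of loops of $\Gamma_\theta$ as the trace of its adjacency matrix $P_\theta A$, which is a well-defined symmetric matrix by Lemma~\ref{lem3.26}. First I would determine the diagonal entries of $P_\theta A$. Since $(P_\theta)_{ik}=1$ precisely when $k=i^\theta$, we get $(P_\theta A)_{ii}=\sum_{k} (P_\theta)_{ik} A_{ki} = A_{i^\theta i}$ for every $i \in \{1,2,\ldots,v\}$. Because $A$ has all entries in $\{0,1\}$, this entry equals $1$ exactly when $i^\theta \sim_\Gamma i$, where, as usual, for a fixed point $i=i^\theta$ of $\theta$ the condition $i^\theta \sim_\Gamma i$ simply means that $i$ carries a loop. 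Hence the number of loops of $\Gamma_\theta$ equals $\mathrm{tr}(P_\theta A) = \bigl|\{\, i \in \{1,\ldots,v\} : i^\theta \sim_\Gamma i \,\}\bigr|$.

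Next I would evaluate this cardinality by splitting the vertex set according to whether a vertex carries a loop in $\Gamma$. There are exactly $N$ vertices with a loop; among them, $A_2$ satisfy $i^\theta \not\sim_\Gamma i$, so $N-A_2$ of them satisfy $i^\theta \sim_\Gamma i$. Among the vertices without a loop, by the definition of $A_1$, exactly $A_1$ of them satisfy $i^\theta \sim_\Gamma i$. Adding the two contributions gives $\bigl|\{\, i : i^\theta \sim_\Gamma i \,\}\bigr| = (N-A_2)+A_1 = N+A_1-A_2$, as claimed.

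There is no serious obstacle here; the argument is a direct trace computation. The only point requiring a little care is the bookkeeping convention for fixed points of $\theta$: a vertex $i$ with $i=i^\theta$ contributes to the count precisely when it has a loop, which is consistent both with the definition of $A_2$ (a looped fixed point $w=w^\theta$ satisfies $w^\theta \sim_\Gamma w$, hence is not among the $A_2$ vertices) and with the fact that looped vertices are all counted in $N$. One should also note that the hypothesis $\theta^2=\mathrm{id}$ enters only through Lemma~\ref{lem3.26}, which guarantees that $P_\theta A$ is symmetric, so that $\Gamma_\theta$ is a genuine undirected graph and the phrase ``number of loops of $\Gamma_\theta$'' is meaningful.
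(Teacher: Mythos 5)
Your argument is correct and follows essentially the same route as the paper: compute the trace of $P_\theta A$, observe that $(P_\theta A)_{ii}=A_{i^\theta i}$, and split the sum over vertices with and without loops to get $(N-A_2)+A_1$. The extra remark about fixed points of $\theta$ is a harmless clarification of the same bookkeeping.
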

\begin{proof}
The number of loops of $\Gamma_\theta$ is equal to
\[ \sum_{i=1}^n (P_\theta A)_{ii} = \sum_{i=1}^n \sum_{l=1}^n (P_\theta)_{il} A_{li} = \sum_{i=1}^n A_{i^\theta i} = {\sum}' A_{i^\theta i} + {\sum}'' A_{i^\theta i},  \]
where $\Sigma'$ is the summation over all vertices $i$ with a loop and $\Sigma''$ is the summation over all vertices $i$ without a loop. The first sum is equal to $N-A_2$, while the second sum is equal to $A_1$.  
\end{proof} 
 
\medskip \noindent Summarizing, we have proved the following.
 
\begin{theorem} \label{theo3.28}
Let $\Gamma$ be an LDDG with parameters $(v,k,\lambda_1,\lambda_2,m,n)$ and suppose $\theta$ is an automorphism of order $2$ of $\Gamma$. Then define a graph $\Gamma_\theta$ on the same vertex set $V$ as $\Gamma$, by calling two (possibly equal) vertices $x$ and $y$ adjacent in $\Gamma_\theta$ if and only if $x^\theta$ and $y$ are adjacent in $\Gamma$. Then the following hold.
\begin{enumerate}
\item[$(1)$] $\Gamma_\theta$ is an LDDG with the same parameters as $\Gamma$.
\item[$(2)$] Let $N$ denote the number of loops of $\Gamma$, $A_1$ the number of vertices $x$ without loops in $\Gamma$ for which $x^\theta \sim_\Gamma x$ and $A_2$ the number of vertices $y$ with loops in $\Gamma$ for which $y^\theta \not\sim_\Gamma y$. Then the number of loops in $\Gamma_\theta$ is equal to $N+A_1-A_2$. 
\end{enumerate}
\end{theorem}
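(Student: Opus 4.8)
The statement is essentially an assembly of Lemmas~\ref{lem3.25}, \ref{lem3.26} and~\ref{lem3.27}, so the plan is to set up the right matrix picture and then invoke these results. Fix an ordering $1,2,\dots,v$ of $V$, let $A$ be the adjacency matrix of $\Gamma$ with respect to this ordering, and let $P_\theta$ be the permutation matrix attached to $\theta$ as in the paragraph preceding Lemma~\ref{lem3.26}. The first step is the elementary identity $(P_\theta A)_{xy}=\sum_{l}(P_\theta)_{xl}A_{ly}=A_{x^\theta y}$, valid for all $x,y\in V$, whose right-hand side equals $1$ exactly when $x^\theta\sim_\Gamma y$, that is, exactly when $x$ and $y$ are adjacent in $\Gamma_\theta$. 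Hence $P_\theta A$ is the adjacency matrix of $\Gamma_\theta$ provided it is symmetric, and it is symmetric by Lemma~\ref{lem3.26}, since $\theta^2$ is the identity (so $P_\theta=P_\theta^{t}$) and $\theta$ is an automorphism (so $P_\theta A P_\theta^{-1}=A$). Thus $\Gamma_\theta$ is a well-defined undirected graph with possible loops.

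For part~(1) one then applies Lemma~\ref{lem3.25} with $P:=P_\theta$ and $B:=A$: both $B$ and $PB=P_\theta A$ are symmetric $0$-$1$ matrices, the graph with adjacency matrix $B$ is $\Gamma$ (an LDDG with parameters $(v,k,\lambda_1,\lambda_2,m,n)$), and hence the graph with adjacency matrix $PB$, which is exactly $\Gamma_\theta$, is an LDDG with the same six parameters. Under the hood, this rests on the observation recorded after Lemma~\ref{lem3.24} that replacing $A$ by a row-permuted matrix preserves $k$-regularity and the $\lambda_1$/$\lambda_2$ agreement counts between pairs of rows relative to a (suitably relabelled) partition into $m$ classes of size $n$ --- the only LDDG-defining property that a row permutation can destroy being symmetry of the matrix. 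If one wanted to be fully explicit, the partition $\{(X_\ell)^{\theta^{-1}}\}$ witnesses the LDDG property for $\Gamma_\theta$, but since Lemma~\ref{lem3.25} already packages this, no extra argument is needed.

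Part~(2) is then immediate from Lemma~\ref{lem3.27} applied to $\theta$: the number of loops of $\Gamma_\theta$ equals the trace of $P_\theta A$, namely $\sum_i A_{i^\theta i}$, and splitting this sum over those vertices of $\Gamma$ that do, respectively do not, carry a loop evaluates it as $(N-A_2)+A_1$, with $N,A_1,A_2$ as in the statement. There is no genuine obstacle here, since both halves follow directly from results already in hand; the one point that warrants a moment's care is the very first step --- verifying that it is $P_\theta A$ (left multiplication), and not $A P_\theta$, that realises the adjacency rule defining $\Gamma_\theta$ --- because this is precisely what makes Lemmas~\ref{lem3.26} and~\ref{lem3.25} applicable in the form stated.
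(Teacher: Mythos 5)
Your proposal is correct and follows essentially the same route as the paper: identify $(P_\theta A)_{ij}=A_{i^\theta j}$ so that $P_\theta A$ is the adjacency matrix of $\Gamma_\theta$, then invoke Lemmas~\ref{lem3.25}, \ref{lem3.26} and \ref{lem3.27} for the LDDG property and the loop count. Your extra remarks (the relabelled partition witnessing the LDDG property, and the left-versus-right multiplication check) are accurate elaborations of what the paper leaves implicit.
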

\begin{proof}
Note that $(P_\theta A)_{ij} = A_{i^\theta j}$. So, in the LDDG defined by the symmetric matrix $P_\theta A$, the vertices $i$ and $j$ are adjacent if and only if $i^\theta$ and $j$ are adjacent in the original LDDG $\Gamma$. The claim now follows from Lemmas \ref{lem3.25}, \ref{lem3.26} and \ref{lem3.27}.
\end{proof}

\begin{remark} \label{Rem3.29}
{\em The operation of permuting rows, but not columns, of an adjacency matrix is called {\em dual Seidel switching}. This operation was introduced by Willem Haemers in 1984. In Lemma \ref{lem3.26}, we apply this operation. Note that dual Seidel switching applied to a Deza graph (resp. an LDDG) results in a Deza graph (resp. an LDDG) with the same parameters; see e.g. Section 1.4.1 of \cite{Go-Sh}.}
\end{remark}

\begin{remark} \label{Rem3.30}
{\em The graph operations $\Gamma \mapsto \overline{\Gamma}$ and $\Gamma \mapsto \Gamma_\theta$ for order $2$ automorphisms $\theta$ produce LDDG's from other LDDG's (Lemma \ref{lem1.1}, Theorem \ref{theo3.28}), and both operations commute. If $\overline{A_1}$ and $\overline{A_2}$  are similarly defined as $A_1$ and $A_2$ (cfr. Theorem \ref{theo3.28}), but for the graph $\overline{\Gamma}$ instead of $\Gamma$, then $\overline{A_1} = A_2$ and $\overline{A_2} = A_1$.}
\end{remark}

\begin{remark} \label{Rem3.31}
{\em In Sections \ref{sec4} and \ref{sec5}, we will construct several families of LDDG's. Certain of these LDDG's admit automorphisms $\theta$ of order 2 induced by symplectic and unitary transvections of vector spaces and/or elations and homologies of projective spaces. We have verified that in certain of these cases the resulting LDDG $\Gamma_\theta$ is nonisomorphic to the original LDDG $\Gamma$, but that it still belonged to one of the families defined in Sections \ref{sec4} or \ref{sec5}. It is possible that Theorem \ref{theo3.28} can ever be used to construct entirely new LDDG's.}
\end{remark}

\section{A first family of LDDG's} \label{sec4}

Consider in the projective space $\PG(m-1,q)$, $m \geq 3$, a polarity $\zeta$. Let $\pi$ be a subspace of dimension $k-1 \geq 0$ which is totally isotropic with respect to $\zeta$. Then $\pi^\zeta$ is an $(m-k-1)$-dimensional subspace containing $\pi$. Let $V$ denote the set of points of $\PG(m-1,q)$ not contained in $\pi^\zeta$. Then
\[ |V| = \frac{q^m-1}{q-1} - \frac{q^{m-k}-1}{q-1} = \frac{q^{m-k}(q^k-1)}{q-1}. \]
The graph $\Gamma$ defined on the vertex set $V$, by calling two (not necessarily distinct) elements $x,y \in V$ adjacent whenever $y \in x^\zeta$ is an undirected graph with possible loops.

\begin{lemma} \label{lem4.1}
For every $x \in V$, $x^\zeta$ does not contain $\pi^\zeta$, and $|x^\zeta \cap \pi^\zeta| = \frac{q^{m-k-1}-1}{q-1}$.
\end{lemma}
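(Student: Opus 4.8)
The plan is to argue entirely by duality with respect to $\zeta$, using the fact that $\pi$ is totally isotropic. Recall that $\pi$ has projective dimension $k-1$, so $\pi^\zeta$ has projective dimension $(m-1)-k = m-k-1$, and the hypothesis $\pi \subseteq \pi^\zeta$ holds. For a point $x$, the image $x^\zeta$ is a hyperplane of $\PG(m-1,q)$. The first claim is that $x^\zeta \not\supseteq \pi^\zeta$: if it were the case that $\pi^\zeta \subseteq x^\zeta$, then applying $\zeta$ (which reverses inclusion and is an involution) would give $x \subseteq (\pi^\zeta)^\zeta = \pi \subseteq \pi^\zeta$, so $x$ would be a point of $\pi^\zeta$, contradicting $x \in V = \PG(m-1,q)\setminus \pi^\zeta$. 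This is the conceptual core of the lemma and I expect it to be the only step requiring genuine thought; it is short.

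With that in hand, the count of $|x^\zeta \cap \pi^\zeta|$ is a routine application of the Grassmann identity for projective spaces, quoted in Section~\ref{sec2}. Since $\pi^\zeta$ is an $(m-k-1)$-dimensional subspace and $x^\zeta$ is a hyperplane that does not contain it, the intersection $x^\zeta \cap \pi^\zeta$ is a subspace of projective dimension $(m-k-1)-1 = m-k-2$ — this is exactly the stated consequence of Grassmann ("if $H$ is a hyperplane and $\pi$ is an $i$-dimensional subspace not contained in $H$, then $\pi \cap H$ is $(i-1)$-dimensional"), applied inside $\PG(m-1,q)$. The number of points of a $d$-dimensional projective subspace over $\F_q$ is $\frac{q^{d+1}-1}{q-1}$, so with $d = m-k-2$ we get
\[
|x^\zeta \cap \pi^\zeta| = \frac{q^{m-k-1}-1}{q-1},
\]
which is the asserted value. (One should note the boundary behaviour: when $k = m-1$, $\pi^\zeta$ has dimension $0$, i.e. is a single point, and $x^\zeta$ not containing it forces the intersection to be empty, matching $\frac{q^0-1}{q-1} = 0$; the formula is therefore valid across the full range $k-1 \geq 0$.)

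So the write-up is: (i) the duality argument for $\pi^\zeta \not\subseteq x^\zeta$, which is the substance; (ii) invoke the Grassmann consequence to get the dimension drop; (iii) plug into the subspace-size formula. I do not anticipate any real obstacle — the main thing to be careful about is keeping the dimension bookkeeping straight (projective versus vector dimensions, and the off-by-one in "hyperplane meets an $i$-space not inside it in an $(i-1)$-space"), and confirming the degenerate case $k=m-1$ is covered.
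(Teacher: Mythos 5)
Your proof is correct and follows essentially the same route as the paper: the duality argument showing $\pi^\zeta \not\subseteq x^\zeta$ (the paper phrases it as $x \notin \pi$, which rests on exactly the same use of total isotropy and $x \notin \pi^\zeta$), followed by the hyperplane-section dimension count and the point-count formula. No gaps.
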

\begin{proof}
As $x \not\in \pi$, the subspace $\pi^\zeta$ is not contained in $x^\zeta$, implying that $x^\zeta \cap \pi^\zeta$ is a hyperplane of $\pi^\zeta$, having dimension $m-k-2$ and containing $\frac{q^{m-k-1}-1}{q-1}$ points.
\end{proof}

\begin{lemma} \label{lem4.2}
The graph $\Gamma$ is regular with valency $\frac{q^{m-k-1}(q^k-1)}{q-1}$.
\end{lemma}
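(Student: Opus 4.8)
The plan is to compute, for an arbitrary vertex $x \in V$, the number of vertices $y \in V$ that are adjacent to $x$, i.e. the number of points $y \in x^\zeta$ with $y \notin \pi^\zeta$. Since $x \notin \pi^\zeta$ and $\zeta$ is a polarity, we have $x \notin \pi = (\pi^\zeta)^\zeta$, hence $x^\zeta$ does not contain $\pi$ either; but more importantly, the relevant subspace to intersect with is $\pi^\zeta$. First I would observe that $x^\zeta$ is always a hyperplane of $\PG(m-1,q)$, containing $\frac{q^{m-1}-1}{q-1}$ points in total. By Lemma \ref{lem4.1}, $x^\zeta$ does not contain $\pi^\zeta$, so $x^\zeta \cap \pi^\zeta$ is a hyperplane of $\pi^\zeta$ with exactly $\frac{q^{m-k-1}-1}{q-1}$ points.

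The valency of $x$ is therefore $|x^\zeta| - |x^\zeta \cap \pi^\zeta| = \frac{q^{m-1}-1}{q-1} - \frac{q^{m-k-1}-1}{q-1} = \frac{q^{m-1}-q^{m-k-1}}{q-1} = \frac{q^{m-k-1}(q^k-1)}{q-1}$. This is independent of $x$, so $\Gamma$ is regular with the claimed valency. I would note that this count is valid whether or not $y = x$: if $x \in x^\zeta$ (i.e.\ $x$ is an isotropic point of $\zeta$), then $x$ contributes a loop and is correctly counted among the neighbours, consistent with the paper's convention that a loop contributes $1$ to the degree.

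The only subtle point worth checking carefully is that $x^\zeta$ is genuinely a hyperplane — this follows because $\zeta$, being an anti-automorphism of $\PG(m-1,q)$, sends the point $x$ (a subspace of dimension $0$) to a subspace of dimension $m-2$, a hyperplane. Everything else is the routine subtraction of point counts, relying only on Lemma \ref{lem4.1} and the Grassmann identity recalled in Section \ref{sec2}. I do not anticipate a real obstacle here; the main care is simply to phrase the degree computation so that the loop convention is transparently respected.
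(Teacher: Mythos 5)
Your computation is exactly the paper's own argument: use Lemma \ref{lem4.1} to get $|x^\zeta \cap \pi^\zeta| = \frac{q^{m-k-1}-1}{q-1}$ and subtract from $|x^\zeta| = \frac{q^{m-1}-1}{q-1}$, yielding the stated valency independently of $x$. Your extra remarks on the loop convention and on $x^\zeta$ being a hyperplane are correct but routine; the proof is fine as is.
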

\begin{proof}
The number of vertices adjacent to a given vertex $x$ is equal to
\[ |x^\zeta \setminus \pi^\zeta| = |x^\zeta| - |x^\zeta \cap \pi^\zeta| = \frac{q^{m-1}-1}{q-1}-\frac{q^{m-k-1}-1}{q-1} = \frac{q^{m-k-1}(q^k-1)}{q-1}. \]
\end{proof}

\begin{lemma} \label{lem4.3}
If $x$ and $y$ are two vertices of $V$, then $x^\zeta \cap \pi^\zeta = y^\zeta \cap \pi^\zeta$ if and only if $\langle x,\pi \rangle = \langle y,\pi \rangle$.
\end{lemma}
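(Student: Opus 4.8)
The plan is to reduce the statement to the bijectivity of the polarity $\zeta$ on the set of subspaces, by rewriting both $x^\zeta \cap \pi^\zeta$ and $y^\zeta \cap \pi^\zeta$ as images under $\zeta$ of the subspaces $\langle x,\pi\rangle$ and $\langle y,\pi\rangle$. For orientation, I would first note that since $x,y\in V$ lie outside $\pi^\zeta$, and $\pi\subseteq\pi^\zeta$ because $\pi$ is totally isotropic, both $x$ and $y$ lie outside $\pi$, so $\langle x,\pi\rangle$ and $\langle y,\pi\rangle$ are genuine $k$-dimensional subspaces; this is not logically needed for the equivalence, but it makes the dimension count consistent with Lemma \ref{lem4.1}.

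The heart of the argument is the identity $x^\zeta \cap \pi^\zeta = \langle x,\pi\rangle^\zeta$ (and the analogous one with $y$), that is, the standard fact that a polarity interchanges the join and the meet of two subspaces. To verify it I would argue directly from the fact that $\zeta$ reverses inclusion and satisfies $\zeta^2=\mathrm{id}$: from $x,\pi\subseteq\langle x,\pi\rangle$ we obtain $\langle x,\pi\rangle^\zeta\subseteq x^\zeta$ and $\langle x,\pi\rangle^\zeta\subseteq\pi^\zeta$, hence $\langle x,\pi\rangle^\zeta\subseteq x^\zeta\cap\pi^\zeta$; conversely, setting $T:=x^\zeta\cap\pi^\zeta$, the inclusions $T\subseteq x^\zeta$ and $T\subseteq\pi^\zeta$ give $x=(x^\zeta)^\zeta\subseteq T^\zeta$ and $\pi=(\pi^\zeta)^\zeta\subseteq T^\zeta$, so $\langle x,\pi\rangle\subseteq T^\zeta$ and therefore $T=(T^\zeta)^\zeta\subseteq\langle x,\pi\rangle^\zeta$, yielding equality.

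Finally, since $\zeta$ is a bijection of the set of subspaces of $\PG(m-1,q)$ onto itself, we have $\langle x,\pi\rangle^\zeta=\langle y,\pi\rangle^\zeta$ if and only if $\langle x,\pi\rangle=\langle y,\pi\rangle$; combined with the identity of the previous paragraph this is exactly the asserted equivalence. I do not expect a genuine obstacle here: the only point requiring a little care is the clean use of the involution property $\zeta^2=\mathrm{id}$ together with the inclusion-reversing property when establishing the join--meet identity.
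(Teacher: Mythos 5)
Your proposal is correct and follows essentially the same route as the paper: both reduce the claim to the identity $x^\zeta \cap \pi^\zeta = \langle x,\pi\rangle^\zeta$ (equivalently $(x^\zeta\cap\pi^\zeta)^\zeta=\langle x,\pi\rangle$) and then invoke the bijectivity of the polarity. The only difference is that you verify the join--meet interchange from the inclusion-reversing and involution properties, whereas the paper takes this standard fact for granted.
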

\begin{proof}
Indeed, $x^\zeta \cap \pi^\zeta = y^\zeta \cap \pi^\zeta$ if and only if $(x^\zeta \cap \pi^\zeta)^\zeta = \langle x,\pi \rangle$ equals $(y^\zeta \cap \pi^\zeta)^\zeta = \langle y,\pi \rangle$.
\end{proof}

\medskip \noindent We can now partition the vertex set $V$ into $\frac{q^{m-k}-q^{m-2k}}{q-1}$ classes of size $q^k$ such that two vertices $x$ and $y$ belong to the same class if and only if $\langle x,\pi \rangle = \langle y,\pi \rangle$.

\begin{lemma} \label{lem4.4}
\begin{enumerate}
\item[$(a)$] If $x$ and $y$ are two distinct vertices of the same class, then $x$ and $y$ have exactly $\frac{q^{m-k-1}(q^{k-1}-1)}{q-1}$ common neighbours.
\item[$(b)$] If $x$ and $y$ are two vertices belonging to different classes, then $x$ and $y$ have exactly $\frac{q^{m-k-2}(q^k-1)}{q-1}$ common neighbours.
\end{enumerate}
\end{lemma}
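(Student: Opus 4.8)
The plan is to count common neighbours by intersecting the relevant subspaces. Fix two vertices $x,y \in V$. A common neighbour of $x$ and $y$ is a point $z \in V$ with $z \in x^\zeta$ and $z \in y^\zeta$, i.e. $z \in (x^\zeta \cap y^\zeta) \setminus \pi^\zeta$. So in both cases the number of common neighbours is $|x^\zeta \cap y^\zeta| - |x^\zeta \cap y^\zeta \cap \pi^\zeta|$, and everything reduces to determining the dimensions of the two subspaces $x^\zeta \cap y^\zeta$ and $x^\zeta \cap y^\zeta \cap \pi^\zeta$. The polarity lets us dualize: $x^\zeta \cap y^\zeta = \langle x,y \rangle^\zeta$, and $x^\zeta \cap y^\zeta \cap \pi^\zeta = \langle x,y,\pi \rangle^\zeta$. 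Thus I need the dimensions of $\langle x,y \rangle$ and $\langle x,y,\pi \rangle$, which is pure projective-geometry bookkeeping using the Grassmann identity.

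First I would handle case $(a)$: $x \neq y$ lie in the same class, so $\langle x,\pi \rangle = \langle y,\pi \rangle$, hence $y \in \langle x,\pi \rangle$ and therefore $\langle x,y,\pi \rangle = \langle x,\pi \rangle$, a subspace of dimension $k$. Its polar image $x^\zeta \cap y^\zeta \cap \pi^\zeta$ has dimension $m-1-k$, giving $\frac{q^{m-k}-1}{q-1}$ points. For $\langle x,y \rangle$: since $x \neq y$ this is a line, dimension $1$, so $x^\zeta \cap y^\zeta$ has dimension $m-3$, giving $\frac{q^{m-2}-1}{q-1}$ points — but I should double-check that $\langle x,y \rangle^\zeta$ really has dimension $m-3$, i.e. that $x \neq y$ as projective points (it is, by hypothesis). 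Subtracting: the number of common neighbours is $\frac{q^{m-2}-1}{q-1} - \frac{q^{m-k}-1}{q-1} = \frac{q^{m-k}(q^{k-2}-1)}{q-1}$. Hmm — this does not match the claimed $\frac{q^{m-k-1}(q^{k-1}-1)}{q-1}$, so I must recount. The issue: $x^\zeta \cap y^\zeta \cap \pi^\zeta$ need not equal $\langle x,y,\pi\rangle^\zeta$ if that span already fails to be proper; more carefully, $\langle x,\pi\rangle$ has dimension $k$ only if $x \notin \pi$, which holds since $x \in V$ and $\pi \subseteq \pi^\zeta$. The correct computation uses $|x^\zeta \cap y^\zeta| = \frac{q^{m-2}-1}{q-1}$ and $|x^\zeta \cap y^\zeta \cap \pi^\zeta|$: here $\langle x,y,\pi\rangle = \langle x,\pi\rangle$ has dimension $k$ only when $y \in \langle x,\pi\rangle$, which is exactly the same-class condition, so its polar has dimension $m-1-k$ and $\frac{q^{m-k}-1}{q-1}$ points — but wait, we need $x^\zeta \cap y^\zeta \cap \pi^\zeta \subseteq \pi^\zeta$ which has only $\frac{q^{m-k}-1}{q-1}$ points total, so in fact $x^\zeta \cap y^\zeta \cap \pi^\zeta = \pi^\zeta$ here? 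No: $x^\zeta$ does not contain $\pi^\zeta$ by Lemma~\ref{lem4.1}. So the honest route is: $|x^\zeta \cap \pi^\zeta| = \frac{q^{m-k-1}-1}{q-1}$ (Lemma~\ref{lem4.1}), and within $x^\zeta$, intersect with $y^\zeta$; one shows $y^\zeta \cap (x^\zeta \cap \pi^\zeta)$ is a hyperplane of $x^\zeta \cap \pi^\zeta$ precisely when $y \notin (x^\zeta \cap \pi^\zeta)^\zeta = \langle x,\pi\rangle$, i.e. for different classes, and equals $x^\zeta \cap \pi^\zeta$ for the same class. That recovers the two stated numbers after arithmetic.

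So the cleaner organization I would actually write is: (i) record that common neighbours of $x,y$ number $|x^\zeta \cap y^\zeta| - |x^\zeta \cap y^\zeta \cap \pi^\zeta|$; (ii) compute $|x^\zeta \cap y^\zeta| = \frac{q^{m-2}-1}{q-1}$ since $\langle x,y\rangle$ is a line whenever $x\ne y$; (iii) in case $(a)$, since $\langle x,\pi\rangle=\langle y,\pi\rangle$ we get $x^\zeta\cap\pi^\zeta = y^\zeta\cap\pi^\zeta$ by Lemma~\ref{lem4.3}, so $x^\zeta\cap y^\zeta\cap\pi^\zeta = x^\zeta\cap\pi^\zeta$ has $\frac{q^{m-k-1}-1}{q-1}$ points by Lemma~\ref{lem4.1}, and $\frac{q^{m-2}-1}{q-1}-\frac{q^{m-k-1}-1}{q-1} = \frac{q^{m-k-1}(q^{k-1}-1)}{q-1}$; (iv) in case $(b)$, $x^\zeta\cap\pi^\zeta \ne y^\zeta\cap\pi^\zeta$ are two distinct hyperplanes of the $(m-k-2)$-dimensional space $\pi^\zeta$, so their intersection $x^\zeta\cap y^\zeta\cap\pi^\zeta$ has dimension $m-k-3$, hence $\frac{q^{m-k-2}-1}{q-1}$ points, and $\frac{q^{m-2}-1}{q-1}-\frac{q^{m-k-2}-1}{q-1} = \frac{q^{m-k-2}(q^{k}-1)}{q-1}$.

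The main obstacle is purely the case analysis on dimensions — specifically verifying in case $(b)$ that $x^\zeta \cap \pi^\zeta$ and $y^\zeta \cap \pi^\zeta$ are genuinely \emph{distinct} hyperplanes of $\pi^\zeta$ (this is the contrapositive of Lemma~\ref{lem4.3}, since $x,y$ in different classes means $\langle x,\pi\rangle \ne \langle y,\pi\rangle$), and making sure the resulting codimension-$2$ subspace of $\pi^\zeta$ really has dimension $m-k-3$ rather than something larger, which is immediate from the Grassmann identity once distinctness is in hand. All remaining steps are routine manipulations of $\frac{q^a-1}{q-1}$.
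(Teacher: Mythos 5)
Your final organization (i)--(iv) is essentially the paper's own proof: both count common neighbours as $|x^\zeta \cap y^\zeta| - |x^\zeta \cap y^\zeta \cap \pi^\zeta|$ with $|x^\zeta \cap y^\zeta| = \frac{q^{m-2}-1}{q-1}$, use Lemmas \ref{lem4.1} and \ref{lem4.3} to get $x^\zeta \cap y^\zeta \cap \pi^\zeta = x^\zeta \cap \pi^\zeta$ of size $\frac{q^{m-k-1}-1}{q-1}$ in case $(a)$, and use distinctness of the hyperplanes $x^\zeta \cap \pi^\zeta \neq y^\zeta \cap \pi^\zeta$ of $\pi^\zeta$ to get dimension $m-k-3$ and size $\frac{q^{m-k-2}-1}{q-1}$ in case $(b)$. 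The only slip (after your self-corrected dualization detour) is calling $\pi^\zeta$ ``$(m-k-2)$-dimensional'' --- it has dimension $m-k-1$, and it is its hyperplanes $x^\zeta \cap \pi^\zeta$ that have dimension $m-k-2$ --- which does not affect your point counts or conclusions.
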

\begin{proof}
The total number of common neighbours is equal to
\[ |(x^\zeta \cap y^\zeta) \setminus \pi^\zeta| = |x^\zeta \cap y^\zeta| - |x^\zeta \cap y^\zeta \cap \pi^\zeta| = \frac{q^{m-2}-1}{q-1} - |x^\zeta \cap y^\zeta \cap \pi^\zeta|. \]
If $x$ and $y$ belong to the same class, then $x^\zeta \cap \pi^\zeta = y^\zeta \cap \pi^\zeta = x^\zeta \cap y^\zeta \cap \pi^\zeta$ contains $\frac{q^{m-k-1}-1}{q-1}$ points by Lemmas \ref{lem4.1} and \ref{lem4.3}. If $x$ and $y$ belong to different classes, then $x^\zeta \cap \pi^\zeta \not= y^\zeta \cap \pi^\zeta$ and so $x^\zeta \cap y^\zeta \cap \pi^\zeta$ has dimension $m-k-3$ and contains $\frac{q^{m-k-2}-1}{q-1}$ points. In the former case, the total number $|\Gamma(x) \cap \Gamma(y)|$ of common neighbours of $x$ and $y$ is thus equal to $\frac{q^{m-2}-1}{q-1} - \frac{q^{m-k-1}-1}{q-1} = \frac{q^{m-k-1}(q^{k-1}-1)}{q-1}$, while in the latter case, this number is equal to $\frac{q^{m-2}-1}{q-1} - \frac{q^{m-k-2}-1}{q-1} = \frac{q^{m-k-2}(q^k-1)}{q-1}$.
\end{proof}

\medskip \noindent We conclude the following.

\begin{corollary} \label{co4.5}
The graph $\Gamma$ is an LDDG with parameters equal to
\[ (\frac{q^{m-k}(q^k-1)}{q-1},\frac{q^{m-k-1}(q^k-1)}{q-1},\frac{q^{m-k-1}(q^{k-1}-1)}{q-1},\frac{q^{m-k-2}(q^k-1)}{q-1},\frac{q^{m-k}-q^{m-2k}}{q-1},q^k). \]
\end{corollary}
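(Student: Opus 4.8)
The plan is to assemble, with a minimum of extra work, the facts already proved in Lemmas \ref{lem4.1}--\ref{lem4.4} together with the vertex count recorded at the start of Section \ref{sec4}, and to check that they match the definition of an LDDG from Section \ref{sec1}. Recall that an LDDG with parameters $(v,k,\lambda_1,\lambda_2,m,n)$ is a $k$-regular graph on $v$ vertices whose vertex set admits a partition into $m$ classes of size $n$ such that two distinct vertices in a common class have exactly $\lambda_1$ common neighbours and two vertices in different classes have exactly $\lambda_2$ common neighbours. So there are really five things to verify: the value of $v$, the regularity together with the value of the valency, the existence of the required equipartition with the stated class size and number of classes, and finally the two numbers $\lambda_1$ and $\lambda_2$.

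First I would note that $v=|V|=\frac{q^{m-k}(q^k-1)}{q-1}$, exactly as computed at the beginning of this section, and that $\Gamma$ is regular with valency $\frac{q^{m-k-1}(q^k-1)}{q-1}$ by Lemma \ref{lem4.2}. By Lemma \ref{lem4.3}, two vertices $x,y\in V$ satisfy $x^\zeta\cap\pi^\zeta=y^\zeta\cap\pi^\zeta$ precisely when $\langle x,\pi\rangle=\langle y,\pi\rangle$; in particular the relation $R$ from Theorem \ref{theo1.2} is an equivalence relation, so its classes partition $V$. Each class consists of the vertices on a fixed $k$-dimensional subspace $\Pi:=\langle x,\pi\rangle$ through $\pi$ with $\Pi\not\subseteq\pi^\zeta$ (here $\dim\Pi=k$ since $x\notin\pi^\zeta\supseteq\pi$). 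The only point needing a moment's thought is the class size: since $\pi\subseteq\Pi\cap\pi^\zeta$ with $\pi$ a hyperplane of $\Pi$ and $\Pi\not\subseteq\pi^\zeta$, we get $\Pi\cap\pi^\zeta=\pi$, so the number of vertices in the class is $\frac{q^{k+1}-1}{q-1}-\frac{q^k-1}{q-1}=q^k$. Thus every class has size $n:=q^k$, and dividing $v$ by $q^k$ shows there are exactly $\frac{q^{m-k}-q^{m-2k}}{q-1}$ classes (the identity $v=q^k\cdot\frac{q^{m-k}-q^{m-2k}}{q-1}$ being immediate to check directly).

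It then remains to read off $\lambda_1$ and $\lambda_2$ from Lemma \ref{lem4.4}: part (a) gives that two distinct vertices of the same class have $\frac{q^{m-k-1}(q^{k-1}-1)}{q-1}$ common neighbours, and part (b) gives that two vertices of distinct classes have $\frac{q^{m-k-2}(q^k-1)}{q-1}$ common neighbours. Comparing with the definition, $\Gamma$ is an LDDG with exactly the stated parameters, which finishes the argument. There is essentially no obstacle here: the sole ingredient not already present verbatim in an earlier lemma is the computation $\Pi\cap\pi^\zeta=\pi$ yielding the uniform class size $q^k$, and everything else is pure bookkeeping. (Observe that the corollary asserts only that $\Gamma$ is an LDDG; upgrading this to \emph{proper}, as in Theorem \ref{theo1.2}, would additionally require $m,n\geq 2$ and $\lambda_1\neq\lambda_2$, but those verifications lie beyond the present statement.)
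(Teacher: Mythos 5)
Your proposal is correct and follows essentially the same route as the paper: Corollary \ref{co4.5} is obtained there by combining the vertex count, Lemma \ref{lem4.2}, the partition induced by the relation of Lemma \ref{lem4.3}, and Lemma \ref{lem4.4}. Your verification that each class has size $q^k$ (via $\Pi\cap\pi^\zeta=\pi$) just makes explicit a step the paper asserts without detail, so there is no substantive difference.
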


\begin{corollary} \label{co4.6}
The complementary graph $\widetilde{\Gamma}$ of $\Gamma$ is an LDDG with parameters equal to 
\[ (\frac{q^{m-k}(q^k-1)}{q-1},q^{m-k-1}(q^k-1),q^{m-1}-q^{m-2}-q^{m-k-1},q^{m-k-2}(q-1)(q^k-1),\frac{q^{m-k}-q^{m-2k}}{q-1},q^k). \]
\end{corollary}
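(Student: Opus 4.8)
The plan is to derive Corollary~\ref{co4.6} directly from Corollary~\ref{co4.5} together with the complementation rule of Lemma~\ref{lem1.1}; no further geometric input is needed. By Corollary~\ref{co4.5}, $\Gamma$ is an LDDG with parameter tuple $(v,\kappa,\lambda_1,\lambda_2,\mu,n)$, where $v=\frac{q^{m-k}(q^k-1)}{q-1}$, $\kappa=\frac{q^{m-k-1}(q^k-1)}{q-1}$, $\lambda_1=\frac{q^{m-k-1}(q^{k-1}-1)}{q-1}$, $\lambda_2=\frac{q^{m-k-2}(q^k-1)}{q-1}$, $\mu=\frac{q^{m-k}-q^{m-2k}}{q-1}$ and $n=q^k$. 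Lemma~\ref{lem1.1} then asserts that the complement $\overline{\Gamma}$ (which is what $\widetilde{\Gamma}$ denotes here, in the sense of the introduction where $\deg_\Gamma(x)+\deg_{\overline{\Gamma}}(x)=v$ for every vertex $x$) is an LDDG with parameters $(v,\,v-\kappa,\,v-2\kappa+\lambda_1,\,v-2\kappa+\lambda_2,\,\mu,\,n)$, and the three entries $v$, $\mu$, $n$ are carried over unchanged.

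It then remains only to perform the three routine simplifications. For the valency, factoring $\frac{q^{m-k-1}(q^k-1)}{q-1}$ out of $v-\kappa$ leaves the factor $q-1$, giving $q^{m-k-1}(q^k-1)$. For the first intersection parameter, pull $\frac{q^{m-k-1}}{q-1}$ out of $v-2\kappa+\lambda_1$; the bracket that remains is $q(q^k-1)-2(q^k-1)+(q^{k-1}-1)=q^{k-1}(q-1)^2-(q-1)$, so after cancelling one factor $q-1$ and multiplying back by $q^{m-k-1}$ one obtains $q^{m-1}-q^{m-2}-q^{m-k-1}$. For the second intersection parameter, pull $\frac{q^{m-k-2}(q^k-1)}{q-1}$ out of $v-2\kappa+\lambda_2$; the bracket collapses to $q^2-2q+1=(q-1)^2$, which after cancellation yields $q^{m-k-2}(q-1)(q^k-1)$. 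These three values agree exactly with the tuple claimed in the statement.

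There is essentially no obstacle here: the corollary is immediate once Corollary~\ref{co4.5} and Lemma~\ref{lem1.1} are available, and the only care required is the polynomial bookkeeping above. As a sanity check one may note that when $\zeta$ is a symplectic polarity every point lies on its own polar hyperplane, so $\Gamma$ carries a loop at each vertex; then $A(\overline{\Gamma})=J-A(\Gamma)$ has zero diagonal, $\overline{\Gamma}$ is loopless, and one recovers the classical DDG alluded to in the introduction with precisely the parameters displayed in the statement.
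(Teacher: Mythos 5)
Your proposal is correct and takes exactly the route the paper intends: Corollary~\ref{co4.6} is stated without proof precisely because it follows from Corollary~\ref{co4.5} by applying the complementation rule of Lemma~\ref{lem1.1}, and your polynomial simplifications of $v-\kappa$, $v-2\kappa+\lambda_1$ and $v-2\kappa+\lambda_2$ check out. You also correctly read $\widetilde{\Gamma}$ here as the complement in the sense of the introduction (where $\deg_\Gamma(x)+\deg_{\overline{\Gamma}}(x)=v$), which is the intended meaning despite the earlier use of the tilde for loop-adding.
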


\medskip \noindent For each of the types of polarities, we will now specify what these parameters are, and indicate how many loops the LDDG's $\Gamma$ and $\widetilde{\Gamma}$ have. There will be two instances where no loops are present.  

\begin{example} \label{Ex4.7}
{\em Suppose $\zeta$ is a symplectic polarity of $\PG(2n-1,q)$, $n \geq 2$, and $\pi$ is a subspace of dimension $n-s-1$, $s \in \{ 0,1,\ldots,n-1 \}$, which is totally isotropic with respect to $\zeta$. Then $\Gamma$ is an LDDG with parameters equal to
\[ (\frac{q^{n+s}(q^{n-s}-1)}{q-1},\frac{q^{n+s-1}(q^{n-s}-1)}{q-1},\frac{q^{n+s-1}(q^{n-s-1}-1)}{q-1},\frac{q^{n+s-2}(q^{n-s}-1)}{q-1},\frac{q^{n+s}-q^{2s}}{q-1},q^{n-s}).  \]
As $x \in x^\zeta$ for every vertex $x \in V$, all vertices of $\Gamma$ have loops. The complementary graph $\widetilde{\Gamma}$ of $\Gamma$ is an LDDG with parameters
\[ (\frac{q^{n+s}(q^{n-s}-1)}{q-1},q^{n+s-1}(q^{n-s}-1),q^{2n-1}-q^{2n-2}-q^{n+s-1},q^{n+s-2}(q-1)(q^{n-s}-1),\frac{q^{n+s}-q^{2s}}{q-1},q^{n-s}),  \]
and has no loops.}
\end{example}

\begin{example} \label{Ex4.8}
{\em Suppose $\zeta$ is a pseudo-polarity in $\PG(2n-1,q)$, $n \geq 2$. Then the set of all points $x$ for which $x \in x^\zeta$ is a hyperplane $H^\ast$ of $\PG(2n-1,q)$ and $(H^\ast)^\zeta$ is a point $x^\ast$ contained in $H^\ast$. Let $\pi$ be a subspace of dimension $n-s-1$, $s \in \{ 0,1,\ldots,n-1 \}$, which is totally isotropic with respect to $\zeta$. Then $\pi \subseteq H^\ast$ and $\Gamma$, $\widetilde{\Gamma}$ are LDDG's with the same parameters as in the symplectic case (Example \ref{Ex4.7}). 

Suppose $\pi$ does not contain $x^\ast$. Then $s \geq 1$ and $\pi^\zeta$ intersects $H^\ast$ in a subspace of dimension $n+s-2$. In this case, the graph $\Gamma$ has $\frac{q^{2n-1}-q^{n+s-1}}{q-1} \geq \frac{q^{2n-1}-q^{2n-2}}{q-1} > 0$ loops and $\widetilde{\Gamma}$ has $q^{2n-1}-q^{n+s-1} > q^{2n-1}-q^{2n-2} > 0$ loops.

Suppose $\pi$ contains $x^\ast$. Then $\pi^\zeta \cap H^\ast=\pi^\zeta$ has dimension $n+s-1$. The graph $\widetilde{\Gamma}$ has $q^{2n-1} > 0$ loops and $\Gamma$ has $\frac{q^{2n-1}-q^{n+s}}{q-1}$ loops. The latter number is 0 if and only if $s=n-1$, i.e. if and only if $\pi$ coincides with the point $x^\ast$. In this case, $\Gamma$ is a DDG having parameters 
\[  (q^{2n-1},q^{2n-2},0,q^{2n-3},q^{2n-2},q). \]
}
\end{example}

\begin{example} \label{Ex4.9}
{\em Suppose $\zeta$ is a pseudo-polarity in $\PG(2n,q)$, $n \geq 1$. The set of all points $x$ for which $x \in x^\zeta$ is a hyperplane $H^\ast$ of $\PG(2n,q)$ and $(H^\ast)^\zeta$ is a point $x^\ast$ not contained in $H^\ast$. Let $\pi$ be a subspace of dimension $n-s-1$, $s \in \{ 0,1,\ldots,n-1 \}$, which is totally isotropic with respect to $\zeta$. Then $\pi \subseteq H^\ast$ and $\Gamma$ is an LDDG with parameters equal to
\[ (\frac{q^{n+s+1}(q^{n-s}-1)}{q-1},\frac{q^{n+s}(q^{n-s}-1)}{q-1},\frac{q^{n+s}(q^{n-s-1}-1)}{q-1},\frac{q^{n+s-1}(q^{n-s}-1)}{q-1},\frac{q^{n+s+1}-q^{2s+1}}{q-1},q^{n-s}). \]
On the other hand, $\widetilde{\Gamma}$ is an LDDG with parameters
\[  (\frac{q^{n+s+1}(q^{n-s}-1)}{q-1},q^{n+s}(q^{n-s}-1),q^{2n}-q^{2n-1}-q^{n+s},q^{n+s-1}(q-1)(q^{n-s}-1),\frac{q^{n+s+1}-q^{2s+1}}{q-1},q^{n-s}). \]
Note that $\pi^\zeta$ intersects $H^\ast$ in a subspace of dimension $n+s-1$. The graph $\Gamma$ therefore has $\frac{q^{2n}-q^{n+s}}{q-1} > 0$ loops and the graph $\widetilde{\Gamma}$ has $q^{2n}-q^{n+s} > 0$ loops.}
\end{example}

\begin{example} \label{Ex4.10}
{\em Suppose $\zeta$ is an orthogonal polarity in $\PG(m-1,q)$ with $m \geq 3$ and $q$ odd. Let $\pi$ be a subspace of dimension $k-1 \geq 0$ which is totally isotropic with respect to $\zeta$. Let $Q$ be the set of all points $x$ for which $x \in x^\zeta$. We distinguish three cases: 
\begin{enumerate}
\item[(a)] $m$ is odd and $Q$ is a parabolic quadric. Then $k \in \{ 1,\ldots,\frac{m-1}{2} \}$. 
\item[(b)] $m$ is even and $Q$ is a hyperbolic quadric. Then $k \in \{ 1,\ldots,\frac{m}{2} \}$.
\item[(c)] $m$ is even and $Q$ is an elliptic quadric. Then $k \in \{ 1,\ldots,\frac{m-2}{2} \}$.
\end{enumerate}
The parameters of the LDDG's $\Gamma$ and $\widetilde{\Gamma}$ are as in Corollaries \ref{co4.5} and \ref{co4.6}. We now determine the number of loops in these graphs. The number of loops in $\Gamma$ is equal to $|Q|-|Q \cap \pi^\zeta|$ and the number of loops in $\widetilde{\Gamma}$ is $\frac{q^{m-k}(q^k-1)}{q-1}$ minus that number. So, it suffices to determine the former number.  Note that since the nonsingular quadric $Q$ cannot be contained in a proper subspace, $\Gamma$ must have loops. As $|Q|$ is strictly smaller than the size of the complement of any proper subspace of $\PG(m-1,q)$ also $\widetilde{\Gamma}$ must have loops. We distinguish between the various cases.
\begin{itemize}
\item Suppose $m=2n+1$ is odd and $\pi$ be a subspace of dimension $n-s-1$ with $n \geq 1$ and $s \in \{ 0,1,\ldots,n-1 \}$. Then $\pi^\zeta$ has dimension $n+s$ and intersects $Q$ in a cone of type $\pi Q(2s,q)$. This cone contains $\frac{q^{n+s}-1}{q-1}$ points, while $Q$ itself contains $\frac{q^{2n}-1}{q-1}$ points. So, $\Gamma$ has $q^{n+s} \frac{q^{n-s}-1}{q-1}$ loops.
\item Suppose $m=2n \geq 2$ is even. Then $|Q|= \frac{q^{2n-1}-1}{q-1} + \epsilon q^{n-1}$, where $\epsilon = +1$ if $Q$ is a hyperbolic quadric and $\epsilon=-1$ if $Q$ is an elliptic quadric. Suppose $\pi$ is a totally isotropic subspace of dimension $n-s-1$, where $s \in \{ 0,1,\ldots,n-1 \}$ if $\epsilon = +1$ and $s \in \{ 1,2,\ldots,n-1 \}$ if $\epsilon = -1$. Then $\pi^\zeta$ has dimension $n+s-1$ and intersects $Q$ in a cone of type $\pi Q^\epsilon(2s-1,q)$. This cone contains $\frac{q^{n+s-1}-1}{q-1} + \epsilon q^{n-1}$ points. So, $\Gamma$ has $\frac{q^{n+s-1}(q^{n-s}-1)}{q-1}$ loops.
\end{itemize}
}
\end{example}

\begin{example} \label{Ex4.11}
{\em Suppose $\zeta$ is a unitary polarity in $\PG(m-1,q^2)$ with $m \geq 3$ (for convenience, we replace here $q$ by $q^2$). Let $\pi$ be a subspace of dimension $k-1 \geq 0$ which is totally isotropic with respect to $\zeta$. Let $\mathcal{H}$ denote the nonsingular Hermitean variety consisting of all points $x$ for which $x \in x^\zeta$. We now determine the number of loops in $\Gamma$ and $\widetilde{\Gamma}$. The number of loops in $\Gamma$ is equal to $|\mathcal{H}| - |\mathcal{H} \cap \pi^\zeta|$ and the number of loops in $\widetilde{\Gamma}$ is equal to $\frac{q^{2(m-k)}(q^{2k}-1)}{q^2-1}$ minus that number. So, it suffices to determine the former number. Note that since $\mathcal{H}$ cannot be contained in a proper subspace, $\Gamma$ must have loops. As $|\mathcal{H}|$ is strictly smaller than the size of the complement of any proper subspace of $\PG(m-1,q^2)$, also $\widetilde{\Gamma}$ must have loops. We distinguish between two cases.    
\begin{enumerate}
\item[(a)] Suppose $m=2n$ is even and $k=n-s$, with $s \in \{ 0,1,\ldots,n-1 \}$. Then $|\mathcal{H}| = \frac{(q^{2n}-1)(q^{2n-1}+1)}{q^2-1}$. The subspace $\pi^\zeta$ has dimension $n+s-1$ and intersects $\mathcal{H}$ in a cone of type $\pi H(2s-1,q^2)$. This cone contains $\frac{q^{2n+2s-1}+q^{2n}-q^{2n-1}-1}{q^2-1}$ points. So, $\Gamma$ has $\frac{q^{2n+2s-1}(q^{2(n-s)}-1)}{q^2-1}$ loops.
\item[(b)] Suppose $m=2n+1$ is odd and $k=n-s$, with $s \in \{ 0,1,\ldots,n-1 \}$. Then $|\mathcal{H}|=\frac{(q^{2n+1}+1)(q^{2n}-1)}{q^2-1}$. The subspace $\pi^\zeta$ has dimension $n+s$ and intersects $\mathcal{H}$ in a cone of type $\pi H(2s,q^2)$. This cone contains $\frac{q^{2n+2s+1}-q^{2n+1}+q^{2n}-1}{q^2-1}$ points. So, $\Gamma$ has $\frac{q^{2n+2s+1}(q^{2(n-s)}-1)}{q^2-1}$ loops.
\end{enumerate}
}
\end{example}

\section{A second family of LDDG's} \label{sec5}

Consider in the projective space $\PG(m-1,q)$, $m,q \geq 3$, a polarity $\zeta$ and suppose $x^\ast$ is a point of $\PG(m-1,q)$ not contained in the hyperplane $H^\ast := (x^\ast)^\zeta$.

Let $V$ denote the set of points of $\PG(m-1,q) \setminus (H^\ast \cup \{ x^\ast \})$. Then
\[ |V| = \frac{q^m-1}{q-1} - \frac{q^{m-1}-1}{q-1}-1=q^{m-1}-1.  \]

Let $\Gamma$ be the graph defined on the vertex set $V$, by calling two possibly equal vertices $x,y$ adjacent whenever $y \in x^\zeta$.

\begin{lemma} \label{lem5.1}
For every $x \in V$, $x^\zeta$ does not contain $x^\ast$.
\end{lemma}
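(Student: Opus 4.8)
The plan is to obtain this directly from the basic symmetry property of a polarity recalled in Section~\ref{sec2}: for any two subspaces $\pi_1,\pi_2$ of $\PG(m-1,q)$ one has $\pi_1\subseteq\pi_2^\zeta$ if and only if $\pi_2\subseteq\pi_1^\zeta$. I would apply this with $\pi_1$ the point $x^\ast$ and $\pi_2$ the point $x$, both regarded as $0$-dimensional subspaces of $\PG(m-1,q)$.

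Concretely, I would argue by contraposition. Assume $x^\ast\in x^\zeta$. Then $\pi_1=x^\ast\subseteq x^\zeta=\pi_2^\zeta$, so the equivalence above yields $\pi_2=x\subseteq(x^\ast)^\zeta=H^\ast$, that is, $x\in H^\ast$. This contradicts the fact that $x$ is a vertex of $\Gamma$, since by construction $V=\PG(m-1,q)\setminus(H^\ast\cup\{x^\ast\})$ contains no point of $H^\ast$. Hence $x^\ast\notin x^\zeta$ for every $x\in V$, as claimed.

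I do not expect any real obstacle here; the statement is essentially just an unwinding of the polarity axiom together with the definition of $V$. The only point worth stressing is that the argument is uniform in the type of $\zeta$ and does not refer to the variety of absolute points of $\zeta$ at all. The hypothesis $x^\ast\notin H^\ast$ is genuinely needed elsewhere in this section (it guarantees that $V$ is nonempty and that $x^\ast\notin V$), but it plays no role in this particular implication.
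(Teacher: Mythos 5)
Your argument is correct and is essentially the paper's own proof: the paper also deduces the claim from $x\notin(x^\ast)^\zeta=H^\ast$ via the symmetry property of the polarity, which you have simply spelled out by contraposition. Nothing further is needed.
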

\begin{proof}
This follows from the fact that $x$ is not contained in $(x^\ast)^\zeta=H^\ast$.
\end{proof}

\begin{lemma} \label{lem5.2}
For every $x \in V$, $x^\zeta$ and $H^\ast$ are distinct hyperplanes and $|x^\zeta \cap H^\ast| = \frac{q^{m-2}-1}{q-1}$.
\end{lemma}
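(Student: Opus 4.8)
The plan is to dispose of the two claims in order: first that $x^\zeta$ and $H^\ast$ are distinct hyperplanes, and then that their intersection has the stated size, the latter being an immediate application of the Grassmann identity recalled in Section \ref{sec2}.

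First I would observe that, since $\zeta$ is a polarity of $\PG(m-1,q)$, it reverses inclusion and $\zeta^2$ is the identity; in particular it maps points to hyperplanes and hyperplanes to points. Hence $x^\zeta$ is a hyperplane, i.e.\ has dimension $m-2$, exactly as $H^\ast=(x^\ast)^\zeta$. For distinctness, suppose to the contrary that $x^\zeta=H^\ast=(x^\ast)^\zeta$. Applying $\zeta$ to both sides and using $\zeta^2=\mathrm{id}$ gives $x=x^\ast$. But $x\in V$ means $x\notin H^\ast\cup\{x^\ast\}$, so in particular $x\neq x^\ast$, a contradiction. Therefore $x^\zeta\neq H^\ast$, and the two are distinct hyperplanes.

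Next, since $x^\zeta$ and $H^\ast$ are distinct hyperplanes, neither is contained in the other; in particular the $(m-2)$-dimensional subspace $x^\zeta$ is not contained in the hyperplane $H^\ast$. By the consequence of the Grassmann identity stated in Section \ref{sec2} (a hyperplane meets an $i$-dimensional subspace not contained in it in an $(i-1)$-dimensional subspace), $x^\zeta\cap H^\ast$ is a subspace of dimension $(m-2)-1=m-3$. A projective subspace of dimension $m-3$ over $\F_q$ contains $\frac{q^{(m-3)+1}-1}{q-1}=\frac{q^{m-2}-1}{q-1}$ points, which is precisely the asserted count, completing the proof.

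I do not anticipate a genuine obstacle here: the only point requiring a moment's care is the distinctness argument, where one must remember that being a polarity (an involutory anti-automorphism) forces $x^\zeta=H^\ast$ to imply $x=x^\ast$; once that is in place, the cardinality is a one-line dimension count. One could alternatively derive distinctness from Lemma \ref{lem5.1}: if $x^\zeta=H^\ast$ then $x^\ast\in (x^\ast)^\zeta=H^\ast=x^\zeta$, contradicting $x^\ast\notin x^\zeta$; either route works.
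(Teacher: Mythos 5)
Your argument is correct and follows essentially the same route as the paper: distinctness of the hyperplanes comes from $x\neq x^\ast$ together with the injectivity of the polarity, and the point count comes from two distinct hyperplanes meeting in a subspace of dimension $m-3$. You merely spell out the injectivity step (via $\zeta^2=\mathrm{id}$) and the Grassmann dimension count that the paper leaves implicit.
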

\begin{proof}
As $x \not= x^\ast$, the hyperplanes $x^\zeta$ and $(x^\ast)^\zeta=H^\ast$ are distinct and intersect in a subspace of dimension $m-3$.
\end{proof}
 
\begin{lemma} \label{lem5.3}
The graph $\Gamma$ is regular with valency $q^{m-2}$.
\end{lemma}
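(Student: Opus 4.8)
The plan is to compute, for an arbitrary vertex $x \in V$, the number of vertices adjacent to $x$ (recalling that a possible loop at $x$ contributes exactly $1$ to this count, in accordance with the degree convention fixed in Section \ref{sec1}), and then to observe that this number is the same for every $x$. By the very definition of $\Gamma$, the neighbours of $x$ are precisely the points of $V$ contained in the hyperplane $x^\zeta$, so the valency of $\Gamma$ at $x$ equals $|x^\zeta \cap V| = |x^\zeta \setminus (H^\ast \cup \{ x^\ast \})|$.

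First I would invoke Lemma \ref{lem5.1}: since $x^\ast \notin x^\zeta$, deleting $x^\ast$ has no effect on the count, so $|x^\zeta \cap V| = |x^\zeta \setminus H^\ast| = |x^\zeta| - |x^\zeta \cap H^\ast|$. Then Lemma \ref{lem5.2} supplies $|x^\zeta| = \frac{q^{m-1}-1}{q-1}$ and $|x^\zeta \cap H^\ast| = \frac{q^{m-2}-1}{q-1}$, from which the valency works out to $\frac{q^{m-1}-1}{q-1} - \frac{q^{m-2}-1}{q-1} = \frac{q^{m-1}-q^{m-2}}{q-1} = q^{m-2}$. Since this value does not depend on the chosen vertex $x$, the graph $\Gamma$ is regular with valency $q^{m-2}$.

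There is essentially no real obstacle here: the statement is an immediate consequence of Lemmas \ref{lem5.1} and \ref{lem5.2} combined with the standard point counts for subspaces of a projective space. The only point meriting a moment's care is the bookkeeping around loops — making sure that when $x \in x^\zeta$ the self-loop at $x$ is counted exactly once, consistently with the convention of Section \ref{sec1} — but this is automatic once the valency is expressed as $|x^\zeta \cap V|$.
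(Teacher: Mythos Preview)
Your proof is correct and follows essentially the same approach as the paper: invoke Lemma~\ref{lem5.1} to discard $x^\ast$, then use Lemma~\ref{lem5.2} to compute $|x^\zeta| - |x^\zeta \cap H^\ast| = \frac{q^{m-1}-1}{q-1} - \frac{q^{m-2}-1}{q-1} = q^{m-2}$. Your added remark on the loop-counting convention is a helpful clarification but not a departure from the paper's argument.
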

\begin{proof}
Using Lemmas \ref{lem5.1} and \ref{lem5.2}, we know that the number of vertices adjacent to a given vertex $x \in V$ is equal to
\[ |x^\zeta \setminus H^\ast| = |x^\zeta \setminus (x^\zeta \cap H^\ast)| = |x^\zeta|-|x^\zeta \cap H^\ast| = \frac{q^{m-1}-1}{q-1} - \frac{q^{m-2}-1}{q-1} = q^{m-2}.  \]
\end{proof}

\begin{lemma} \label{lem5.4}
If $x$ and $y$ are two distinct vertices of $V$, then $x^\zeta \cap H^\ast = y^\zeta \cap H^\ast$ if and only if $x^\ast$ is contained on the line through $x$ and $y$.
\end{lemma}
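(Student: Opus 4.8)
The plan is to apply the polarity $\zeta$ to the two subspaces $x^\zeta \cap H^\ast$ and $y^\zeta \cap H^\ast$, so as to convert the asserted equality of these $(m-3)$-dimensional subspaces of $H^\ast$ into an equality of two lines through $x^\ast$, and then to recognise that this last equality is precisely the stated collinearity condition.

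First I would recall two facts about a polarity $\zeta$: it is a bijection of the set of all subspaces of $\PG(m-1,q)$ onto itself, and, since it reverses inclusion and satisfies $\zeta^2 = \mathrm{id}$, for any two subspaces $A$ and $B$ one has $A^\zeta \cap B^\zeta = \langle A, B \rangle^\zeta$. Taking $A$ to be the point $x$ and $B$ the point $x^\ast$, and using $H^\ast = (x^\ast)^\zeta$, this gives
\[ x^\zeta \cap H^\ast = x^\zeta \cap (x^\ast)^\zeta = \langle x, x^\ast \rangle^\zeta = (x x^\ast)^\zeta, \]
and in the same way $y^\zeta \cap H^\ast = (y x^\ast)^\zeta$. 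Note that $x x^\ast$ and $y x^\ast$ really are lines, since $x \neq x^\ast$ and $y \neq x^\ast$ because $x$ and $y$ belong to $V$, which excludes $x^\ast$.

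Since $\zeta$ is injective on subspaces, the equality $x^\zeta \cap H^\ast = y^\zeta \cap H^\ast$ is therefore equivalent to $x x^\ast = y x^\ast$. It remains to check that $x x^\ast = y x^\ast$ holds if and only if $x^\ast$ lies on the line $xy$ (recall $x \neq y$). If $x^\ast \in xy$, then $x$, $y$ and $x^\ast$ are points of the single line $xy$, which then equals both $x x^\ast$ and $y x^\ast$; conversely, if $x x^\ast = y x^\ast =: \ell$, then $x$ and $y$ are two distinct points of $\ell$, so $\ell = xy$, and $\ell$ contains $x^\ast$. Chaining these equivalences gives the lemma.

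There is no genuinely hard step here: the only points needing care are verifying that the spans involved are honest lines (which is exactly why $x, y \in V$, hence $x, y \neq x^\ast$, is used) and applying the bijectivity of $\zeta$ in the correct direction. The argument uses nothing beyond $\zeta$ being a polarity and $x^\ast \notin H^\ast$; in particular it is independent of the type of $\zeta$ and of the hypothesis $q \geq 3$.
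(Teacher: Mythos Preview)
Your proof is correct and follows essentially the same approach as the paper's: both arguments apply $\zeta$ to convert $x^\zeta \cap H^\ast = y^\zeta \cap H^\ast$ into the equality $\langle x,x^\ast\rangle = \langle y,x^\ast\rangle$ of lines through $x^\ast$, and then read off the collinearity condition. You simply spell out in more detail the identity $A^\zeta \cap B^\zeta = \langle A,B\rangle^\zeta$ and the final equivalence $xx^\ast = yx^\ast \Leftrightarrow x^\ast \in xy$, whereas the paper leaves these implicit.
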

\begin{proof}
Indeed, $x^\zeta \cap H^\ast = y^\zeta \cap H^\ast$ if and only if $(x^\zeta \cap H^\ast)^\zeta = \langle x,x^\ast \rangle$ coincides with $y^\zeta \cap H^\ast = \langle y,x^\ast \rangle$.
\end{proof}

\medskip \noindent We can now partition the vertex set $V$ into $\frac{q^{m-1}-1}{q-1}$ classes of size $q-1$, where each such class is obtained from one of the  $\frac{q^{m-1}-1}{q-1}$ lines through $x^\ast$ by removing the points that are contained in $\{ x^\ast \} \cup H^\ast$.

\begin{lemma} \label{lem5.5}
\begin{enumerate}
\item[$(a)$] Two distinct vertices of the same class have no common neighbours.
\item[$(b)$] Two vertices belonging to distinct classes have $q^{m-3}$ common neighbours.
\end{enumerate}
\end{lemma}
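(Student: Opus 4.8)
The plan is to describe, for two distinct vertices $x$ and $y$ of $V$, the set of their common neighbours as the set of points of $x^\zeta \cap y^\zeta$ that remain after deleting $H^\ast \cup \{x^\ast\}$, and then to settle the two cases by a short dimension count. First I would note that since $\zeta$ is a polarity and $x \ne y$, the hyperplanes $x^\zeta$ and $y^\zeta$ are distinct, so $x^\zeta \cap y^\zeta$ is a subspace of dimension $m-3$ with $\frac{q^{m-2}-1}{q-1}$ points, and a vertex $z$ is a common neighbour of $x$ and $y$ exactly when $z \in x^\zeta \cap y^\zeta$ and $z \in V$. I would also record at the outset that $x^\ast \notin x^\zeta$ (and likewise $x^\ast \notin y^\zeta$): otherwise $x^\ast \in x^\zeta$ would force $x \in (x^\ast)^\zeta = H^\ast$, contradicting $x \in V$. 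Hence $x^\ast \notin x^\zeta \cap y^\zeta$, so the number of common neighbours of $x$ and $y$ is simply $|x^\zeta \cap y^\zeta| - |x^\zeta \cap y^\zeta \cap H^\ast|$.

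For part (a), $x$ and $y$ lie in the same class, so by Lemma \ref{lem5.4} we have $x^\zeta \cap H^\ast = y^\zeta \cap H^\ast$. By Lemma \ref{lem5.2} this common subspace has dimension $m-3$, and it is contained in both $x^\zeta$ and $y^\zeta$, hence in $x^\zeta \cap y^\zeta$; since $x^\zeta \cap y^\zeta$ also has dimension $m-3$, the two subspaces coincide. Thus $x^\zeta \cap y^\zeta \subseteq H^\ast$, so no point of $x^\zeta \cap y^\zeta$ lies in $V$, and $x$, $y$ have no common neighbours.

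For part (b), $x$ and $y$ lie in distinct classes, so by Lemma \ref{lem5.4} the $(m-3)$-dimensional subspaces $x^\zeta \cap H^\ast$ and $y^\zeta \cap H^\ast$ of the hyperplane $H^\ast$ are distinct. Being two distinct hyperplanes of the $(m-2)$-dimensional space $H^\ast$, they meet in a subspace of dimension $m-4$, so $x^\zeta \cap y^\zeta \cap H^\ast$ contains $\frac{q^{m-3}-1}{q-1}$ points. Subtracting, the number of common neighbours of $x$ and $y$ equals $\frac{q^{m-2}-1}{q-1} - \frac{q^{m-3}-1}{q-1} = q^{m-3}$, as claimed.

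I do not expect any real obstacle here: the argument is just a sequence of Grassmann-identity dimension counts, and the only points needing care are verifying that $x^\ast$ never lies in $x^\zeta \cap y^\zeta$ (handled above) and invoking Lemma \ref{lem5.4} correctly in each of the two cases. One should also keep the boundary value $m=3$ in mind, where "dimension $m-4$" denotes the empty subspace; the formulas then still give the correct counts $0$ and $q^{m-3}=1$.
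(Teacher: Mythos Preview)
Your proposal is correct and follows essentially the same approach as the paper's own proof: both arguments reduce the count to $|x^\zeta \cap y^\zeta| - |x^\zeta \cap y^\zeta \cap H^\ast|$ after noting (via Lemma~\ref{lem5.1}) that $x^\ast \notin x^\zeta$, then split into the two cases using Lemma~\ref{lem5.4} and a dimension count. Your write-up is slightly more explicit in justifying that $x^\zeta \cap y^\zeta$ coincides with $x^\zeta \cap H^\ast$ in case~(a) and in viewing $x^\zeta \cap H^\ast$, $y^\zeta \cap H^\ast$ as hyperplanes of $H^\ast$ in case~(b), but the underlying argument is identical.
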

\begin{proof}
By Lemma \ref{lem5.1}, the number of common neighbours of two distinct vertices $x$ and $y$ is equal to
\[ |(x^\zeta \cap y^\zeta) \setminus H^\ast| = |x^\zeta \cap y^\zeta| - |x^\zeta \cap y^\zeta \cap H^\ast|. \]
If $x$ and $y$ belong to the same class, then $x^\zeta \cap H^\ast = y^\zeta \cap H^\ast$ by Lemma \ref{lem5.4}. We then have $x^\zeta \cap y^\zeta = x^\zeta \cap y^\zeta \cap H^\ast$ and $x$, $y$ have no common neighbours.

If $x$ and $y$ belong to distinct classes, then $x^\zeta \cap H^\ast \not= y^\zeta \cap H^\ast$ and $x^\zeta \cap y^\zeta \cap H^\ast$ has dimension $m-4$ containing $\frac{q^{m-3}-1}{q-1}$ points. In this case, the number $|\Gamma(x) \cap \Gamma(y)|$ of common neighbours of $x$ and $y$ is equal to $|x^\zeta \cap y^\zeta| - |x^\zeta \cap y^\zeta \cap H^\ast| = \frac{q^{m-2}-1}{q-1}-\frac{q^{m-3}-1}{q-1}=q^{m-3}$.
\end{proof}

\begin{corollary} \label{co5.6}
The graph $\Gamma$ is an LDDG with parameters equal to
\[ (q^{m-1}-1,q^{m-2},0,q^{m-3},\frac{q^{m-1}-1}{q-1},q-1). \] 
\end{corollary}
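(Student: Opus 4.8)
The statement is essentially a bookkeeping consequence of the lemmas already established in this section, so the plan is simply to assemble them and to check that the listed sextuple is a legitimate parameter tuple. First I would recall that by the computation preceding Lemma~\ref{lem5.1} the vertex set $V$ has exactly $q^{m-1}-1$ elements, so $v = q^{m-1}-1$. By Lemma~\ref{lem5.3}, $\Gamma$ is regular with valency $k = q^{m-2}$. By the discussion after Lemma~\ref{lem5.4}, the vertex set $V$ is partitioned into $m' := \frac{q^{m-1}-1}{q-1}$ classes, each obtained from one of the lines through $x^\ast$ by deleting its (unique) intersection points with $H^\ast$ and with $\{x^\ast\}$; hence every class has size $n := q-1$.

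Next I would invoke Lemma~\ref{lem5.5}: two distinct vertices in a common class have $0$ common neighbours, so $\lambda_1 = 0$, and two vertices in distinct classes have $q^{m-3}$ common neighbours, so $\lambda_2 = q^{m-3}$. Since $\lambda_1 \neq \lambda_2$ (as $q \geq 3$ forces $q^{m-3} > 0$ when $m \geq 3$; for $m = 3$ one has $\lambda_2 = 1 \neq 0$), these two numbers are indeed well defined and independent of the chosen pair, which is exactly the defining property of a (proper) LDDG with canonical partition the one just described. It remains only to confirm that the tuple is consistent: the product of the number of classes and the class size is $m' \cdot n = \frac{q^{m-1}-1}{q-1}\cdot(q-1) = q^{m-1}-1 = v$, as required by the definition.

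As a sanity check — which I would include or relegate to a remark — one can verify the identity of Proposition~\ref{prop3.1}: with $k = q^{m-2}$, $\lambda_1 = 0$, $\lambda_2 = q^{m-3}$, $n = q-1$ and $m' - 1 = \frac{q^{m-1}-q}{q-1} = \frac{q(q^{m-2}-1)}{q-1}$, the right-hand side $(n-1)\lambda_1 + (m'-1)n\lambda_2$ equals $\frac{q(q^{m-2}-1)}{q-1}\cdot(q-1)\cdot q^{m-3} = q^{m-2}(q^{m-2}-1) = k(k-1)$, matching the left-hand side. There is no genuine obstacle here; the only point requiring a moment's care is matching the role of ``$m$'' in the LDDG parameter list (the number of classes, $\frac{q^{m-1}-1}{q-1}$) against the ``$m$'' appearing as the projective dimension plus one in $\PG(m-1,q)$, which I would flag explicitly to avoid confusion.
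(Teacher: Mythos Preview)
Your proposal is correct and follows exactly the route the paper intends: Corollary~\ref{co5.6} is stated without proof because it is an immediate assembly of the size of $V$, Lemma~\ref{lem5.3}, the class description after Lemma~\ref{lem5.4}, and Lemma~\ref{lem5.5}. Your additional sanity check via Proposition~\ref{prop3.1} and the remark on the two meanings of ``$m$'' are helpful but not required.
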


\begin{corollary} \label{co5.7}
The graph $\widetilde{\Gamma}$ is an LDDG with parameters equal to
\[ (q^{m-1}-1,q^{m-1}-q^{m-2}-1,q^{m-1}-2q^{m-2}-1,q^{m-3}(q-1)^2-1,\frac{q^{m-1}-1}{q-1},q-1). \] 
\end{corollary}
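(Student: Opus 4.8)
The plan is to deduce Corollary~\ref{co5.7} directly from Corollary~\ref{co5.6} and Lemma~\ref{lem1.1}, so that essentially no new geometric work is required. By Corollary~\ref{co5.6}, the graph $\Gamma$ is an LDDG with parameters $(v,k,\lambda_1,\lambda_2,m,n) = \bigl(q^{m-1}-1,\, q^{m-2},\, 0,\, q^{m-3},\, \frac{q^{m-1}-1}{q-1},\, q-1\bigr)$. Since $\widetilde{\Gamma}$ is the complement of $\Gamma$ (as in Corollary~\ref{co4.6}), Lemma~\ref{lem1.1} guarantees that $\widetilde{\Gamma}$ is again an LDDG, whose parameters are obtained from those of $\Gamma$ via the substitution $(v,k,\lambda_1,\lambda_2,m,n)\mapsto(v,\, v-k,\, v-2k+\lambda_1,\, v-2k+\lambda_2,\, m,\, n)$.

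It then only remains to insert the explicit values and simplify. The total number of vertices $v=q^{m-1}-1$ and the block parameters $m=\frac{q^{m-1}-1}{q-1}$, $n=q-1$ are unchanged. For the valency of $\widetilde{\Gamma}$ one computes $v-k=q^{m-1}-1-q^{m-2}=q^{m-1}-q^{m-2}-1$; for the within-class intersection number $v-2k+\lambda_1=q^{m-1}-1-2q^{m-2}=q^{m-1}-2q^{m-2}-1$; and for the between-class intersection number $v-2k+\lambda_2=q^{m-1}-1-2q^{m-2}+q^{m-3}$, which equals $q^{m-3}(q-1)^2-1$ once one recognises that $q^{m-1}-2q^{m-2}+q^{m-3}=q^{m-3}(q^2-2q+1)=q^{m-3}(q-1)^2$. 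This reproduces exactly the parameter list in the statement.

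I do not expect any genuine obstacle in this argument; the single step that needs a moment's attention is the factorisation $q^{m-1}-2q^{m-2}+q^{m-3}=q^{m-3}(q-1)^2$ used to present the $\lambda_2$-parameter of the complement in closed form. If desired, one can add that, by Proposition~\ref{prop3.2}, the graph $\widetilde{\Gamma}$ is moreover a \emph{proper} LDDG, necessarily with the same canonical partition as the one described in Theorem~\ref{theo1.3}, although this refinement is not part of the statement to be proved.
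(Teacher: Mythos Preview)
Your proposal is correct and takes exactly the approach implicit in the paper: Corollary~\ref{co5.7} is stated without proof because it follows immediately from Corollary~\ref{co5.6} via Lemma~\ref{lem1.1}, just as Corollary~\ref{co4.6} follows from Corollary~\ref{co4.5}. Your explicit verification of the arithmetic, including the factorisation $q^{m-1}-2q^{m-2}+q^{m-3}=q^{m-3}(q-1)^2$, is accurate.
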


\medskip \noindent We illustrate this with an example, in which a DDG (without loops) arises. Suppose $\zeta$ is a pseudo-polarity in $\PG(2n,q)$ with $n$ a positive integer and $q \geq 3$. The set of all points $x$ of $\PG(2n,q)$ for which $x \in x^\zeta$ is a hyperplane $H^\ast$ of $\PG(2n,q)$ and $x^\ast := (H^\ast)^\zeta$ is a point not contained in $H^\ast$. By Corollaries \ref{co5.6} and \ref{co5.7}, $\Gamma$ and $\widetilde{\Gamma}$ are LDDG's with respective parameters
\[ (q^{2n}-1,q^{2n-1},0,q^{2n-2},\frac{q^{2n}-1}{q-1},q-1), \]
\[ (q^{2n}-1,q^{2n}-q^{2n-1}-1,q^{2n}-2q^{2n-1}-1,q^{2n-2}(q-1)^2-1,\frac{q^{2n}-1}{q-1},q-1). \]
All vertices of $\widetilde{\Gamma}$ have loops, while none of the vertices of $\Gamma$ has loops.

\section*{Acknowledgments}
Anwita Bhowmik and Sergey Goryainov is supported by Natural Science Fundation of Hebei Province (A2023205045).


\begin{thebibliography}{99}

\bibitem{Be-Ju-Le} Th. Beth, D. Jungnickel and H. Lenz. {\em  Design theory. Vol. I.} Second edition. Encyclopedia of Mathematics and its Applications 69. Cambridge University Press, 1999.

\bibitem{Bh-Go} A. Bhowmik and S. Goryainov.  Divisible design graphs from symplectic graphs over rings with precisely three ideals. \verb|https://arxiv.org/pdf/2412.04962|

\bibitem{Br-Ha} A. E. Brouwer and W. H. Haemers. {\em Spectra of graphs.} Universitext. Springer, 2012.

\bibitem{DK} D. Crnkovi\'c and H. Kharaghani. Divisible design digraphs. pp. 43--60 in ``Algebraic design theory and Hadamard matrices'',  {\em Springer Proc. Math. Stat.} 133. Springer, 2015. 

\bibitem{DB-Go-Ha-Sh} B. De Bruyn, S. Goryainov, W. H. Haemers and L. Shalaginov. Divisible design graphs from the symplectic graph. {\em Designs, Codes and Cryptography}, 2024.
\verb|https://doi.org/10.1007/s10623-024-01557-w|

\bibitem{Go-Sh} S. Goryainov and L. V. Shalaginov. Deza graphs: a survey and new results. \verb|https://arxiv.org/abs/2103.00228|

\bibitem{GWL} J. Guo, K. Wang and F. Li. Deza graphs based on symplectic spaces. {\em European J. Combin.} 31 (2010), no. 8, 1969--1980. 

\bibitem{HKM} W. H. Haemers, H. Kharaghani and M. A. Meulenberg. Divisible design graphs. {\em J. Combin. Theory Ser. A} 118 (2011), no. 3, 978--992. 

\bibitem{Hir} J. W. P.  Hirschfeld. {\em Projective geometries over finite fields.} Second edition. Oxford Mathematical Monographs. The Clarendon Press, Oxford University Press, 1998.

\bibitem{Hi-Th} J. W. P. Hirschfeld and J. A. Thas. {\em General Galois geometries.} Springer Monographs in Mathematics. Springer, London, 2016. 

\bibitem{Ka} V. V. Kabanov. Construction of divisible design graphs using affine designs. \verb|https://arxiv.org/pdf/2502.12503|

\end{thebibliography}
\end{document}